\theoremstyle{plain}
  \newtheorem{theorem}{Theorem}[section]
  \newtheorem{lemma}{Lemma}[section]
  \newtheorem{proposition}{Proposition}[section]
  \newtheorem{corollary}{Corollary}[section]
  \newtheorem{definition}{Definition}[section]
  \newtheorem{remark}{Remark}[section]
\newcommand{\U}{\overline{U}}
   \newcommand{\beqn}{\begin{eqnarray}}
   \newcommand{\eeqn}{\end{eqnarray}}
   \newcommand{\beqs}{\begin{eqnarray*}}
   \newcommand{\eeqs}{\end{eqnarray*}}
   \newcommand{\ban}{\begin{eqnarray*}}
   \newcommand{\nan}{\end{eqnarray*}}
   \newcommand{\beq}{\begin{equation}}
   \newcommand{\eeq}{\end{equation}}
   \newcommand{\p}{\partial}
\newcommand{\eps}{\varepsilon}
\newcommand{\Lm}{\Omega^*}
\newcommand{\Om}{\Omega}
\newcommand{\pom}{{\p\Om}}
\newcommand{\bom}{{\overline\Om}}
\renewcommand{\det}{\mbox{det}}
\newcommand{\F}{\mathcal F}
\newcommand{\R}{\mathbb{R}}
\numberwithin{equation}{section}
\newcommand{\ut}[1]{\underaccent{\tilde}{#1}}
\renewcommand{\vec}[1]{\ut{#1}}
  \numberwithin{equation}{section}
  \numberwithin{figure}{section}
\begin{document}

\title[$C^{2,\alpha}$ regularity of free boundaries in optimal transportation]
{\textbf{$C^{2,\alpha}$ regularity of free boundaries \\ in optimal transportation}}

\author{Shibing Chen}
\address{Shibing Chen, School of Mathematical Sciences,
University of Science and Technology of China,
Hefei, 230026, P.R. China.}
\email{chenshib@ustc.edu.cn}

\author[J. Liu]
{Jiakun Liu}
\address
	{Jiakun Liu, School of Mathematics and Applied Statistics,
	University of Wollongong,
	Wollongong, NSW 2522, AUSTRALIA}
\email{jiakunl@uow.edu.au}

\author[X.-J. Wang]
{Xu-Jia Wang}
\address
{Xu-Jia Wang, Centre for Mathematics and Its Applications,
The Australian National University,
Canberra, ACT 0200, AUSTRALIA}
\email{Xu-Jia.Wang@anu.edu.au}

\subjclass[2000]{35J96, 35J25, 35B65.}

\keywords{Optimal transportation, Monge-Amp\`ere equation, free boundary}

%    \thanks will become a 1st page footnote.
\thanks{Research of Chen was supported by National Key R\&D program of China 2022YFA1005400, 2020YFA0713100, National Science Fund for Distinguished Young Scholars (No. 12225111), and NSFC No. 12141105.
Research of Liu and Wang was supported by ARC DP200101084 and DP230100499.  Research of Liu was supported by FT220100368.
}

%\subjclass[2000]{Primary 53C44, 35K55}

\date{\today}

\dedicatory{}

\begin{abstract}  
The regularity of the free boundary in optimal transportation is equivalent to that of the potential function along the free boundary.
By  establishing new geometric estimates of  the free boundary and studying the second boundary value problem of the Monge-Amp\`ere equation, 
we obtain the  $C^{2,\alpha}$ regularity of the potential function as well as that of the free boundary,
thereby resolve an open problem raised by Caffarelli and McCann in \cite{CM}.
\end{abstract}

\maketitle

\baselineskip=16.4pt
\parskip=3pt

\section{Introduction}
Let $\Omega$ and $\Omega^*$ be two disjoint, bounded, convex domains in the Euclidean space $\R^n$.
Let $f$ and $g$ be the densities in $\Omega$ and $\Omega^*$,  respectively. 
Let $m$ be a positive constant satisfying 
\begin{equation}\label{mass1}
m\leq \min\Big\{\int_\Omega f, \,\, \int_{\Omega^*} g\Big\}.
\end{equation}
A non-negative, finite Borel measure $\gamma$ on $\mathbb{R}^n\times \mathbb{R}^n$ is called 
a transport plan (with mass $m$) from the distribution $(\Omega, f)$ to the distribution $(\Omega^*, g)$,
if $\gamma(\R^n\times\R^n)  =m$ and 
\beq\label{tp}
%{\begin{split}
       \gamma(A\times \mathbb{R}^n) \leq \int_{A\cap\Om} f(x)\,dx, \ \ \ 
       \gamma(\mathbb{R}^n \times A) \leq \int_{A\cap\Om^*} g(y)\,dy
%       \end{split}}
\eeq
for any Borel set $A\subset\R^n$.
A transport plan $\gamma$ is {\it optimal} if it minimises the cost functional 
\begin{equation} \label{mini}
 \int_{\mathbb{R}^n \times \mathbb{R}^n} |x-y|^2\, d\gamma(x,y) 
\end{equation}    
among all transport plans.
 
In the pioneering work \cite{CM},  Caffarelli and McCann proposed to study the above optimal partial transport problem. 
The word ``partial" means that under the condition \eqref{mass1}, 
not all of the mass in $\Omega$ is transported to $\Omega^*$.  
The existence and uniqueness of the optimal transport plan have been proved in  \cite{CM}.
%{\Small\color{blue} (for any given $m$, the optimal transport plan and the optimal mapping are 1-1?)}
Let $U\subset \Omega$ be the sub-domain in which the mass $m=\int_U f$ 
is transported to $V\subset\Om^*$ by the optimal transport plan.
%%% {\Small\color{blue} (A transport plan is a measure $\gamma$.  Can we say a ``measure'' transport a mass from $U$ to $V$?)}
The sets $\mathcal F=:\partial U\cap \Omega$ and $\mathcal F^*=:\partial V\cap\Omega^*$ are called {\it free boundaries}
of the problem.

When $\Omega, \Omega^*$ are strictly convex and separate (i.e. their closures are disjoint), and $f, g$ are positive and bounded, 
Caffarelli and McCann \cite{CM} proved that the free boundaries $\mathcal F$ and $\mathcal F^*$ are $C^{1,\alpha'}$ smooth
for some $\alpha' \in(0,1)$. 
%{\Small\color{blue} (The previous version uses $\alpha$ many times for $u, \F\in C^{1,\alpha}$ and $\beta$ a few times. I think it is better to use $\alpha$ here. There is no confusion to use $\alpha$ throughout the paper.)}
If $\Om$ and $\Om^*$ partly overlap, namely if $\Omega\cap\Omega^*\ne\emptyset$,
Figalli \cite{AFi2, AFi}  proved that $\mathcal F$ and $\mathcal F^*$ are locally $C^1$ smooth 
away from the common region $\Omega\cap\Omega^*$.
Later, Indrei \cite{I}  improved the $C^1$ regularity to $C^{1,\alpha'}$, also away from $\Omega\cap\Omega^*$.
Related problems were also studied by Kitagawa-McCann \cite{KM2} and Kitagawa-Pass \cite{KM1}.

An open problem raised in \cite{CM}  is the higher regularity of free boundaries.
In this paper we resolve the problem completely.

 \begin{theorem}\label{main1}
Let $\Omega, \Omega^* \subset \R^n$ be two separate, uniformly convex domains with $C^2$ boundaries.
Assume that $f\in C^{\alpha}(\overline{\Omega})$ and $g\in C^{\alpha}(\overline{\Omega^*})$
are positive densities for some $\alpha\in(0,1)$, and $m$ is a positive constant satisfying \eqref{mass1}.
Then the free boundaries $\mathcal{F}$ and $\mathcal{F}^*$ are $C^{2,\alpha}$ smooth. 
 If furthermore, $f, g\in C^{\infty}$ and $\partial\Omega, \partial\Omega^*\in C^{\infty}$,  
then $\mathcal{F}, \mathcal{F}^*$ are $C^{\infty}$ smooth.
 \end{theorem}

We remark that the above theorem also holds for the more general case when two convex domains have overlap as considered by Figalli \cite{AFi2, AFi} and Indrei \cite{I}. In particular, the main result holds for the part of free boundary away from the closure of the common region.

Recall that for the complete transport problem,  
namely when $m=\|f\|_{L^1(\Om)}=\|g\|_{L^1(\Om^*)}$  and $U=\Om$, $V=\Om^*$, 
the optimal transport plan is characterised by a convex potential function $u$ in $\Omega$, 
which satisfies the Monge-Amp\`ere equation 
\beq\label{MAEu}
\det\, D^2 u = \frac{f}{g\circ Du}\quad \ \text{ in }\  \Om 
\eeq
subject to the natural boundary condition 
\beq \label{bdy-u}
Du(\Om)= \Om^*.
\eeq 
Caffarelli proved that $u\in C^{1,\alpha'}(\overline\Om)$
if $\Omega, \Omega^*$ are bounded and convex, and $f, g$ are positive and bounded \cite{C92b}.
He also proved that $u\in C^{2,\alpha}(\overline\Om)$ if
$\Omega, \Omega^*$ are uniformly convex and $C^2$ smooth, and $f, g\in C^\alpha$ \cite{C96}.
If $f, g$ are smooth, the global $C^{2,\alpha}$ regularity was first obtained by Delano\"e \cite{D91} in dimension two,
and later by Urbas \cite{U1} for higher dimensions.
In a recent paper  \cite{CLW1}, the authors relaxed the uniform convexity and $C^2$ regularity 
of the boundaries $\pom, \pom^*$ in  \cite{C96}.   
In dimension two, the regularity assumption on the boundaries can be further relaxed  \cite{CLW2, SY}.

For the partial transport problem, let $u$ be the potential function of the optimal transport map from the active region $U$ to $V$. 
Then $u$ satisfies the boundary value problem \eqref{MAEu} and \eqref{bdy-u}
with the domains $\Om$ and $\Om^*$ replaced by $U$ and $V$, respectively.
By relation \eqref{normalformular} in Section \ref{S2},  
the regularity of $\mathcal{F}$ follows from that of $u$ at the free boundary $\mathcal{F}$.
Therefore, to prove the free boundary $\mathcal{F}\in C^{2,\alpha}$, 
we aim to establish the $C^{2,\alpha}$ regularity of $u$ up to the free boundary $\mathcal{F}$.
If the $C^{2,\alpha}$ regularity of $u$ is established, 
higher regularity then follows from the standard elliptic theory \cite{GT}, see Remark \ref{rmkhr}.

 %{\color{blue} Let $x_0\in \mathcal{F},$ it is known that $y_0=Du(x_0)\in \partial V\cap \partial\Omega^*.$
% Suppose $\nu_U(x_0), \nu_V(y_0)$ are the unit inner normals of $U, V$ at $x_0, y_0$ respectively. }

Recall that to obtain the $C^{2,\alpha}$ regularity for the problem \eqref{MAEu} and \eqref{bdy-u} in \cite{C96, CLW1}, 
one first proves the uniform density and the tangential $C^{1, 1-\eps}$ regularity for $u$ and its dual
function $v$, and then uses them to establish the uniform obliqueness. 
But in our current case, the free boundary $\mathcal F$, as part of the boundary $\p U$,  
is not convex in general, nor is it known to be $C^{1,1}$ smooth in advance.
The convexity and  the $C^{1, 1}$ regularity of the domains are crucial 
in  \cite{C96, CLW1}, and in \cite{D91, U1} as well,
and are used throughout the proofs in these papers.
%%% We cannot prove the uniform density and the the tangential $C^{1, 1-\eps}$ regularity for $u$
%%% before the uniform obliqueness is established.
Therefore to prove the regularity of the free boundary,  we cannot follow the route in \cite{C96,CLW1}.
Innovative observations and ideas are needed.  One of the main new ingredients we introduced is that a delicate application of the interior ball property to the carefully chosen points can give us some unexpected geometric estimates of the free boundary and control the shape of  the centred sub-level sets $S_h^c[v]$  (see Lemma \ref{keylemma}, \ref{hkey1}, \ref{hkey2} and Corollary \ref{coroabove}).

%Since the breakthrough by Caffarelli \cite{Ca77}, 
%the free boundary problem has been extensively studied in the last few decades.
%A well-known approach  \cite{Ca98} is to study the blow-up profile of the free boundary, 
%and to prove the regularity of the free boundary according to the classification of the limit profiles.
% In our current case, we are unable to prove that a blow-up sequence at the free boundary converges 
%to a limit with simple geometry.
%such as quadratic functions.
% By blowing-up,
%we mean to normalise a sequence of sub-level sets of the solution, which is different from that in the classical free boundary problem.
%{\color{purple} (Instead, we shall emphasise the difficulty in our current case as pointed by Caffarelli-McCann.)}

The argument in this paper is built upon a careful local geometric analysis in \S3 and a blow-up analysis in \S5,
for the potential functions $u$ and its dual $v$. 
The whole proof can be roughly divided into two parts. 
In the first part (\S3 and \S4),  we assume a uniform obliqueness condition, 
such that the problem \eqref{MAEu} and \eqref{bdy-u} (with $\Omega, \Omega^*$ replaced by
$U, V$ respectively) locally becomes a uniformly oblique derivative problem of the Monge-Amp\`ere equation.
%{\color{purple} (our problem is $Du(U)=V$ but not $Du(\Om)=\Om^*$. Even though, it is just a locally uniformly oblique problem but not globally.)}
We remark that generally there is no a priori $C^{1,1}$ estimate for the Monge-Amp\`ere equation subject to the oblique condition
$\p_\beta u =\psi$ on $\pom$ even if the domain $\Om$ is uniformly convex and smooth, 
and the vector $\beta$ is smooth \cite {U2}, see Remark \ref{r3.1}.
In this paper we establish the a priori $C^{2,\alpha}$ estimate for the solution,
using various local estimates on the potential functions $u, v$ and the free boundary $\F$ in \cite{C96, CM, CLW1}

In the second part (\S5 and \S6), we verify the assumption of the uniform obliqueness condition.
 Assume by contradiction that the uniform obliqueness condition fails.  
% {\color{blue}In the complete transport case, one can use duality and the tangential $C^{1,1-\epsilon}$ regularity of $u$ to control the shape of $v.$ However, in our current case, due to the non-convexity of $\mathcal{F},$ we do not have such kind of regularity of $u$ a priori. To overcome this difficulty, we
%{\color{blue} In this case we perform a careful blow-up analysis.
%Even though we cannot prove the convergence of a blow-up sequence to a nice limit profile,
%but we obtain some helpful properties, }
In this case, by utilising the interior ball property (Lemma \ref{intball}), we can  give a precise characterization of the shape of  the centred sub-level sets $S_h^c[v]$, which is a crucial ingredient of performing a blow-up analysis. Then in the limit profile, we have the following helpful properties,
such as 1): the blow-up limit of the free boundary is convex;
2):  the blow-up limit of the free boundary can be decomposed 
as a product $\R^{n-2}\otimes \gamma$ for a convex curve $\gamma$.
With these properties, and using some techniques from \cite{C96, CLW1}
we derive a contradiction. Hence the uniform obliqueness condition is satisfied.

%Our idea is that if  the obliqueness fails at some point $x_0\in\mathcal{F}$,
%we want to prove that the blow-up limit of the free boundary at $x_0$ is convex 
%{and satisfies some nice geometric behaviour,}
%from which we then derive a contradiction.
%Therefore a main ingredient of the paper is the analysis for the blow-up process.
%In the argument for the uniform obliqueness, we will study the dual potential function $v$, instead of $u$.
%{\color{blue} Instead of using the estimates for $u$ and the duality as in \cite{CLW1}, 
%here we apply the interior ball property in a novel way to control the shape of the level set of $v.$}  Once the uniform obliqueness is obtained, 
%we then prove in turn the uniform density and the tangential $C^{1, 1-\eps}$ regularity for $u$,
%{by different techniques}.
%%% which are two ingredients in the arguments in  \cite{C96,CLW1}.
%When all these properties are proved, we obtain the boundary $C^{2,\alpha}$ regularity as in \cite{CLW1}.

% {\Small\color{blue} (I feel inappropriate to emphasize that we cannot establish the
%the uniform density and the the tangential $C^{1, 1-\eps}$ regularity for $u$
%before the uniform obliqueness. 
%Actually we will be working with $v$ and we do obtain the uniform density 
%and the tangential $C^{1, 1-\eps}$ regularity for $v$ before the uniform obliqueness.
%It is a question how to describe the difference and technical difficulties of this paper?)}

This paper is organised as follows. 
In \S\ref{S2} we recall some results from \cite{C96, CM, CLW1} which will be used in subsequent sections.
In \S\ref{S5} we prove the $C^{1,1-\epsilon}$ regularity of the free boundary $\mathcal{F}$ for any given small $\epsilon\in(0,1),$
assuming the uniform obliqueness condition.
In \S\ref{S6}, we  raise the $C^{1,1-\epsilon}$ regularity to $C^{2,\alpha}$ by a perturbation method
and thus prove Theorem \ref{main1}. 
\S\ref{Sbu} deals with the blow-up analysis at the free boundary where the obliqueness fails,
which leads to a contradiction in \S\ref{Sob} 
and thus confirming the obliqueness property.

\section{Preliminaries}\label{S2}

\subsection{Potential functions}\label{s21}
Throughout the paper, we always assume that the densities $f, g$ satisfy
\begin{equation}\label{bdfg}
\lambda^{-1}<f, g<\lambda
\end{equation}
 in $\Omega, \Omega^*$, respectively, for a positive constant $\lambda$,  
and  $\overline{\Omega}, \overline{\Omega^*}$ are disjoint and uniformly convex.
For a fixed constant $m$ satisfying \eqref{mass1}, 
it is shown in \cite{CM} that the optimal transport plan $\gamma$, 
namely the minimiser of \eqref{mini}, is characterised by 
\beq\label{optmeas}
\gamma =(\text{Id} \times T)_{\#}f_m=(T^{-1} \times \text{Id})_{\#} g_m, 
\eeq
where $f_m=f \chi_{_U}$, $g_m=g\chi_{_V}$, 
and $T$ is the optimal transport map from the active domain $U\subset \Omega$ 
to the active target $V\subset \Lm$.
The notation $T_{\#}\mu$ denotes the pushforward of measure $\mu$ by the mapping $T$  \cite{V1,V2}. 
Moreover, there exist convex potentials $u, v$ on $\R^n$ such that
\beq\label{opm}
\begin{split}
   T(x) & =Du(x)\quad \forall\, x\in U, \\
   T^{-1}(y) & = Dv(y)\quad \, \forall\ y\in V,
    \end{split}
\eeq 
and 
 \begin{equation} \label{push}
 \begin{split} 
  & (Du)_{\#}(f_m+(g-g_m))= g, \\
  & (Dv)_{\#}((f-f_m)+g_m) = f .
  \end{split}
 \end{equation}
The convex functions $u,v$ also satisfy 
\beq\label{globlip}
	Du(\R^n)=\overline{\Om^*},\ \ \ Dv(\R^n)=\bom , 
\eeq 
and can be expressed by 
\beq\label{newv}
 \begin{split} 
 & u(x) = \sup\{L(x) \,:\, L\ \text{is affine}, L\leq u\ \text{in}\  (\Omega^*\setminus \overline{V})\cup U, \ \text{and} \ DL\in \Omega^*\}, \\
 & v(y) = \sup\{L(y) \,:\, L\ \text{is affine}, L\leq v\ \text{in}\  (\Om\setminus \U)\cup V, \ \text{and} \  DL\in \Omega\}.
  \end{split}
\eeq

Let 
 \begin{align*} 
  & u^*(y):= {\sup}_{x\in \mathbb{R}^n}  \left\{  y\cdot x-u(x) \right\} \  \ {\text{for}\ y\in \overline{\Omega^*} }, \\
 & v^*(x):= {\sup}_{y\in \mathbb{R}^n} \left\{ x\cdot y-v(y)  \right\} \  \ {\text{for}\ x\in \overline{\Omega} }
 \end{align*} 
be the standard Legendre transforms of $u, v$, respectively.
The following properties are proved in \cite{CM}:
\begin{itemize}
\item  [$(i)$]  $u=v^*$ in $U$; and  $v=u^*$ in $V$. \label{relu*}

\item [$(ii)$]   $Du(x)=x$ for $x\in \Omega^*\setminus \overline V$  and 
$Dv(y)=y$ for $y\in \Omega\setminus \overline U$.
Hence  
\begin{align*}
 u(x) &=\frac12 |x|^2+C\ \text{in each connected component of $\Omega^*\setminus \overline{V}$,} \\
 v(y)& =\frac12 |y|^2+C\ \text{in each connected component of $\Omega\setminus \overline{U}$.}
 \end{align*} 
\item [$(iii)$]   $u^*$ (resp. $v^*$) is strictly convex in $\Omega^*$ (resp. $\Omega$).
\end{itemize}

\begin{remark} \label{uvu*}
Note that $u^*$ and $v$ are two different functions. 
$u^*$ is the Legendre transform of $u$, it is defined in $\overline{\Omega^*}$.
But $v$ is defined in $\R^n$, and $v$ is strictly convex in and only in $V\cup (\Om\setminus \U)$.
By property ($i$) we have $v=u^*$ in $V$. 
There are similar relations between $u$ and $v^*$. 
%{\Small\color{blue} (What is the behaviour of $u$ in $\Om-U$?
%it seems later we also need the behaviour of $u$ in $\Om-U$ in \S5.
%Also in \S5, we regard $u$ as the Legendre transform of $v$ in \S5.)}
%{\Small\color{red}(In \S5 we have corrected the corresponding part, there we should regard $u$ as the Legendre transform of $u^*.$ \S5 now becomes \S3.)}
\end{remark}

By \eqref{push} and Property $(ii)$, $u$ satisfies the Monge-Amp\`ere equation 
\begin{align}\label{MAuU}
\det\, D^2 u &= \frac{f}{g\circ Du}\quad \ \text{ in }\  U, \\
Du(U) &= V. \nonumber
\end{align}
and the dual function $v$ satisfies  
\begin{align}\label{MAEv} 
\det\, D^2 v  &= \frac{g}{f\circ Dv}\quad \ \text{ in }\  V, \\
Dv(V) &= U. \nonumber
\end{align}
Furthermore, by \eqref{newv} and since  $\Omega,\Omega^*$ are bounded, 
$u$ and $v$ are globally Lipschitz in $\R^n$.
By \eqref{push}, $u$ and $v$ satisfy respectively
\begin{align}\label{Asolv} 
	C^{-1}(\chi_{_{\Omega^*\setminus V}} +\chi_{_U}) 
	      & \leq \det\, D^2 u \leq C (\chi_{_{\Omega^*\setminus V}}+\chi_{_U}), \nonumber \\
	C^{-1}(\chi_{_{\Om\setminus U}}+\chi_{_V}) 
	      &\leq \det\, D^2 v \leq C (\chi_{_{\Om\setminus U}}+\chi_{_V})
  \end{align}
in the sense of Alexandrov \cite{C92}, where $C$ is a positive constant depending only on $\lambda$.

For a convex function $w : \R^n\to(-\infty, \infty]$,
the associated \emph{Monge-Amp\`ere measure} $\mu_w$ is defined by
\beq\label{MAmeas}
\mu_w(E) := \left| \p w(E) \right| 
\eeq 
for any measurable set  $ E\subset\R^n$,
where $\p w$ is the sub-gradient of $w$ and $|\cdot|$ denotes the $n$-dimensional Hausdorff measure.
If $w$ is $C^2$ smooth, then
$$ \mu_w(E) = \int_E \det\, D^2w(x) \,dx. $$ 
We say that $w$ satisfies $C_1 \chi_{_W}\leq \det\, D^2 w \leq C_2\chi_{_W}$ in the sense of Alexandrov,
if
$$ C_1|E\cap W|\leq \mu_w(E) \leq C_2|E\cap W| \ \ \ \forall \ E\subset\R^n. $$
Hence \eqref{Asolv} implies that the Monge-Amp\`ere measure 
$\mu_v$ (resp. $\mu_u$) is actually supported and bounded on $(\bom\setminus U)\cup \overline V$ 
(resp. $(\overline{\Omega^*}\setminus V)\cup\overline U$).

\subsection{$C^{1,\alpha'}$ regularity of $\mathcal{F}$}
We recall the \emph{interior ball condition} proved in \cite{CM}, 
which will be useful in our subsequent analysis. 

\begin{lemma}[{\cite[Corollary 2.4]{CM}}]\label{intball}
Let $x\in U$ and $y=Du(x)$, then 
	\[ \Om\cap B_{|x-y|}(y) \subset U. \]
Likewise, let $y\in V$ and $x=Dv(y)$, then
	\[ \Lm\cap B_{|x-y|}(x) \subset V. \]
\end{lemma}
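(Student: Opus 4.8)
The plan is to prove the first inclusion $\Om\cap B_{|x-y|}(y)\subset U$ by a competitor argument exploiting the optimality of $\gamma_m$; the second inclusion then follows by interchanging the roles of $(\Om,f,U)$ and $(\Om^*,g,V)$, i.e.\ by applying the first inclusion to the inverse problem, whose potential is $v$. I would argue for $x$ in the open active region, so that $y=Du(x)\in V$ is well defined by the interior regularity \eqref{homo int}; the boundary case $x\in\p U\cap\Om$ follows afterwards by approximating $x$ from inside $U$ and using the continuity of $Du$ together with \eqref{homo int}.

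Suppose, for contradiction, that there is $z\in\Om\setminus\overline U$ with $|z-y|<|x-y|$. Fix $\delta>0$ small so that $B_\delta(x)\subset U$, $B_\delta(z)\subset\Om\setminus\overline U$, and $B_\delta(x)\cap B_\delta(z)=\emptyset$; by \eqref{homo int} one has $Du(B_\delta(x))\subset B_r(y)$ with $r=r(\delta)\to0$ as $\delta\to0$. Since $\gamma_m=(Id\times Du)_{\#}(f\chi_U)$, the measure $\rho:=(Id\times Du)_{\#}(f\chi_{B_\delta(x)})$ is a sub-plan of $\gamma_m$ supported in $B_\delta(x)\times B_r(y)$ with mass $\eps:=\int_{B_\delta(x)}f>0$; after shrinking $B_\delta(x)$ we may also assume $\eps\le |B_\delta(z)|/\lambda$. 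Set $\nu:=(\pi^2)_{\#}\rho$. Because $z\notin\overline U$, the ball $B_\delta(z)$ carries source mass untouched by $\gamma_m$, and since $f\ge1/\lambda$ there is a plan $\sigma$ supported in $B_\delta(z)\times B_r(y)$ of mass $\eps$ whose first marginal is $\le f\chi_{B_\delta(z)}$ and whose second marginal equals $\nu$. Put
\[\widetilde\gamma:=\gamma_m-\rho+\sigma.\]
Then $\widetilde\gamma\ge0$; its first marginal is modified only on $B_\delta(x)\cup B_\delta(z)$ and stays $\le f$; its second marginal is unchanged since $(\pi^2)_{\#}\rho=(\pi^2)_{\#}\sigma=\nu$; and $\widetilde\gamma(\R^n\times\R^n)=m-\eps+\eps=m$, so $\widetilde\gamma$ is an admissible competitor. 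As $\rho$ and $\sigma$ live in $(\delta+r)$-neighbourhoods of $(x,y)$ and $(z,y)$ respectively,
\[\int c\,d\widetilde\gamma-\int c\,d\gamma_m=\int c\,d\sigma-\int c\,d\rho\le\Big(\tfrac12|z-y|^2-\tfrac12|x-y|^2+C(\delta+r)\Big)\eps,\]
which is negative for $\delta$ small because $|z-y|<|x-y|$ strictly. This contradicts the optimality of $\gamma_m$, hence $\Om\cap B_{|x-y|}(y)\subset U$.

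The step I expect to be the main obstacle is the admissibility bookkeeping for $\widetilde\gamma$, which has three parts: (i) peeling off from $\gamma_m$ a sub-plan of positive, controllable mass concentrated near $(x,y)$ — this is exactly where the interior regularity \eqref{homo int} is needed; (ii) re-injecting the same amount of mass out of $B_\delta(z)$ without violating the marginal constraint $\le f$, which uses $z\notin\overline U$ (so $B_\delta(z)$ has spare source mass) together with the lower density bound $f\ge1/\lambda$; and (iii) leaving the second marginal exactly fixed, so that the competitor still transports total mass $m$. Once these are arranged, the strict distance inequality $|z-y|<|x-y|$ gives a strict decrease of the transport cost, and all the localisation is harmless because $c$ is continuous and $\Om$, $\Om^*$ are bounded.
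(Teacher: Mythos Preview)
The paper does not prove this lemma; it is simply recalled from Caffarelli--McCann \cite{CM}, and your competitor argument is exactly the one given there. The admissibility bookkeeping you flag as the main obstacle (peeling off $\rho$ near $(x,y)$, re-injecting $\sigma$ from $B_\delta(z)$ with the same second marginal $\nu$, and checking the first-marginal constraint via the spare mass in $\Om\setminus\overline U$ and the lower bound $f\ge 1/\lambda$) is correct as written; the product coupling $\sigma=\eps^{-1}\mu\otimes\nu$ with $\mu=\tfrac{\eps}{\int_{B_\delta(z)}f}\,f\chi_{B_\delta(z)}$ does the job.

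One small point: your contradiction hypothesis takes $z\in\Om\setminus\overline U$, so the argument as stated only yields $\Om\cap B_{|x-y|}(y)\subset\overline U$ rather than $\subset U$. To close this gap, either observe that in \cite{CM} the active region is normalised so that $U$ coincides with the interior of its closure in $\Om$, or argue that any $z\in\p U\cap\Om$ lying in the \emph{open} ball $B_{|x-y|}(y)$ has points of $\Om\setminus\overline U$ arbitrarily nearby (hence still in the ball), reducing to the case you treated. Also, since the lemma only asserts the inclusion for $x\in U$, your approximation step for $x\in\p U\cap\Om$ is not needed here (and would in any case rely on Theorem~\ref{CMCL}(1) rather than \eqref{homo int} for continuity of $Du$ up to $\mathcal F$), though it is harmless to include.
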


By Lemma \ref{intball}, it is shown in \cite{CM} that $u$ is $C^1$ smooth up to the free boundary $\mathcal F$, 
and the unit inner normal vector of $\mathcal F$ is given by
 \begin{equation}\label{normalformular}
 \nu(x)=\frac{Du(x)-x}{|Du(x)-x|}\quad \forall\ x\in \mathcal F.
 \end{equation}
Hence, the regularity of $u$ up to the free boundary $\mathcal F$ implies the regularity of the free boundary $\mathcal F$ itself.
The following regularity results have been obtained in \cite{CM}.

\begin{theorem}[\cite{CM}] \label{CMCL}
Assume that $\Omega, \Omega^*$ are disjoint and strictly convex,
 the densities $f, g$ satisfy
$\lambda^{-1}<f, g<\lambda$ for a positive constant $\lambda$. 
Then
\begin{itemize}

\item [$i)$]  $u, v\in C^1(\mathbb{R}^n)$,  $Dv$  is 1-1 from $\overline{V}$ to $\U$, and $Du$  is 1-1 from $\overline{U}$ to $\overline V$. 
 
\item [$ii)$]  $u\in C^{1,\alpha'}$ up to the free boundary $\mathcal{F}$, and thus $\mathcal{F}$ is $C^{1,\alpha'}$ for some $\alpha\in(0,1)$.

\item [$iii)$]  $\forall\,x_0\in \mathcal{F}$, $\exists$ a neighborhood $\mathcal{N}$ of $x_0$ such that $v$ is strictly convex in $Du(\mathcal{N}\cap\overline{U})$.

\item [$iv)$]   Let $y_0=Du(x_0)$. Then $y_0\in \partial V\setminus \overline{\partial V\cap \Omega^*}\subset \partial\Omega^*.$
Moreover, there exists a constant $r$ depending on $\text{dist}(x_0, \partial \Omega),$ such that $B_r(y_0)\cap \Omega^*\subset V.$

\end{itemize}
\end{theorem}

\subsection{Sub-level sets} \label{s23}
To study higher order regularity of the potentials $u, v$,
we introduce the (centred)  sub-level sets as in  \cite{C92b,C96}.
Note that from $iii)$ and $ iv)$ of Theorem \ref{CMCL}, the function $v$ is locally strictly convex 
near $Du(\mathcal{F}) \subset \partial V\setminus \overline{\partial V\cap \Omega^*}$, 
which (as a portion of $\partial\Omega^*$) is convex as well.

\begin{definition}\label{defS}
Let $y_0\in \overline V$ and $h>0$ be a small constant. We denote by
\beq\label{sect}
	S^c_{h}[v](y_0) := \left\{y\in\R^n \,:\, v(y)< v(y_0) + (y-y_0)\cdot \bar{p} + h\right\}
\eeq
the centred sub-level set of $v$ with height $h$, where $\bar{p}\in \R^n$ is chosen 
such that the centre of mass of $S^c_{h}[v](y_0)$ is $y_0$.
We denote by
\beq\label{sub}
S_h[v](y_0) : =\left\{y\in V \,:\, v(y) < \ell_{y_0}(y) + h\right\}
\eeq
the sub-level set of $v$ with height $h$, where $\ell_{y_0}$ is a support function of $v$ at $y_0$. 
\end{definition}

Note that in the above definition, 
$S_h[v](y_0)$ is a subset of $V$ but $S^c_{h}[v](y_0)$ may not be contained in $\Om^*$.
In the following we will write $S_h[v](y_0)$ and $S^c_{h}[v](y_0)$ as 
$S_h[v]$  and $S^c_{h}[v]$ when no confusion arises.

\begin{remark} \label{uniest11}
%\emph{
Suppose $v(0)=0, v\geq 0.$
Let $L$ be the affine function such that $S^c_h[v](0)=\{v<L\}$.  
Since $(L-v)(0)=h$,  $L=v$ on $\partial S^c_h[v](0),$ $L\geq v\geq  0$ in $S^c_h[v](0),$ 
and $S^c_h[v](0)$ is balanced around $0$,  we have that 
 \begin{equation}\label{secrela}
v\leq L\leq Ch\ \ \ \text{in}\  S^c_h[v](0)
\end{equation}
 for a constant $C$ depending only on $n.$ 
  %{\color{red} Indeed, given any $e\in \mathbb{S}^{n-1},$ the line $\{te: t\in\mathbb{R}\}$ intersects $\partial S^c_h[v](0)$ at two points $p, q.$ 
 %By John's Lemma \cite[Lemma 2.1]{C96},
 %(see \cite{LW} for a simple proof), 
 %there is an ellipsoid $E$ centred at $0$ such that
 %\begin{equation}\label{ima3}
 %E\subset S^c_h[v]\subset C(n) E.
 %\end{equation}
 %Hence, $S^c_h[v](0)$ is balanced around $0,$ which implies that $1/C<\frac{|p|}{|q|}<C$ for some positive constant $C$ depending only on $n.$ Hence $0=cp+(1-c)q$ for some constant $0<c<1$ depending only on $n.$ Now, since $L$ is an affine function, we have  $h=L(0)=cL(p)+(1-c)L(q),$ which implies the desired estimate 
 %\eqref{secrela}.
 %}
 Indeed, assume that $L(te)=\sup_{S^c_h[v](0)}L$ at $te\in\partial S^c_h[v](0)$ for some $e\in \mathbb{S}^{n-1}$ and $t>0$.
 Let $-t'e\in\partial S^c_h[v](0)$ for some $t'>0$ be the boundary point along the opposite direction $-e$. 
 By its definition, the centre of mass of the convex set $S^c_h[v](0)$ is $0$, hence $t'\approx t$, namely $C^{-1}<t'/t<C$ for some constant $C$ depending only on $n$. 
 Since $L$ is an affine function, we have
 $$ h = L(0) = \frac{t}{t+t'}L(-t'e) + \frac{t'}{t+t'}L(te) \geq \frac{t'}{t+t'}L(te). $$
 Therefore, $L(te)\leq Ch$. 
The same property also holds if $v$ is replaced by $u.$ 
\end{remark}

%Let $L$ be the affine function such that $S^c_h[v](y_0)=\{v<L\}$.   Since $(L-v)(y_0)=h$,  $L=v$ on $\partial S^c_h[u](y_0)$, $L\geq v$ in $S^c_h[v](y_0)$, and $S^c_h[v](y_0)$ is balanced around $y_0$,  we have 
% \begin{equation}\label{secrela}
%0\leq L-v \leq Ch\quad\text{ in }S^c_h[v](y_0),
%\end{equation}
%for some constant $C$ depending only on $n$.
%In particular, if $v(y_0)=0, v\geq 0,$ then 
%\begin{equation}
%v \leq Ch\quad\text{ in }S^c_h[v](y_0)

For any $x_0\in \mathcal{F}$,  we have  $y_0:=Du(x_0)\in \partial\Om^*$.
When $h>0$ is sufficiently small, by \cite[Lemma 7.11]{CM} we have 
\begin{equation}\label{localise}
	S_h^c[v](y_0) \cap \Omega^*\subset V \quad\text{and}\quad S_h^c[v] (y_0) \cap \overline{\Omega}=\emptyset. 
\end{equation}
By \cite[Theorem 7.13]{CM} we have furthermore the strict convexity 
 \begin{equation}\label{strictconvex}
 v(y)\geq v(y_0)+Dv(y_0)\cdot (y-y_0)+ C|y-y_0|^{1+\beta}\quad \forall\, y\in \overline{V}\text{ near } y_0
 \end{equation}
for some constant  $\beta>1$, which in turn implies $u\in C^{1,\alpha'}$ as in part $ii)$ of Theorem \ref{CMCL}. 
% {\color{purple} (later, we need to make $\alpha, \beta, \alpha'$ consistent throughout the paper. )}

\begin{lemma}[Uniform density] \label{ud1} 
Let $\Omega, \Omega^*$ be as in Theorem \ref{main1}. 
Suppose that the densities $f, g$ satisfy
$\lambda^{-1}<f, g<\lambda$ for a positive constant $\lambda$.
 Let $x_0\in \mathcal{F}$,  and $y_0:=Du(x_0)\in \partial\Omega^*$.
Then for any $h>0$ small, we have 
\beq\label{UniDen}
	\frac{|S_h^c[v](y_0) \cap V|}{|S_h^c[v](y_0) |} \geq \delta ,
\eeq
where $\delta$ is a positive constant depending on $n, \lambda,\Omega^*,$
but independent of $h$.  
 \end{lemma} 
 
The above uniform density was proved in \cite[Theorem 3.1]{C96} under the condition that the source domain is polynomial convex and the target domain is convex.
Here we consider the potential $v$ in the domain $V$, and $V$ is uniformly convex near $y_0$, 
which is stronger than the polynomial convexity. 
But the target $U$ may not be convex near $x_0=Dv(y_0) \in\mathcal{F}$.
Thanks to the $C^{1,\alpha'}$ regularity of $\mathcal F$ in $ii)$ of Theorem \ref{CMCL}, 
we are able to work out a proof based on that in \cite{C96}.

 \begin{proof}
Without loss of generality, we may assume that $y_0=0$ and write $S_h^c[v](y_0)$ as $S_h^c[v]$ for brevity.  
By $iv)$ of Theorem \ref{CMCL}, we have $0\in \partial V\setminus \overline{\partial V\cap \Omega^*}\subset \partial\Omega^*.$
By John's Lemma \cite[Lemma 2.1]{C96},
 %(see \cite{LW} for a simple proof), 
 there is an ellipsoid $E$ centred at $0$ such that
 \begin{equation}\label{ima2}
 E\subset S^c_h[v]\subset C(n) E,
 \end{equation}
 where $\alpha E$ denotes the $\alpha$-dilation with respect to the centre of $E$, and the constant $C(n)$ depends only on $n$. By taking $h$ small enough, we may assume \eqref{localise} hold,
which implies that $S_h^c[v] \cap V=S_h^c[v] \cap \Omega^*$ is a convex set.
Since $S_h^c[v]$ is centred at $0\in\partial V$, for any $y \in V\cap S^c_h[v]$, we have
$\frac{1}{C(n)}y \in V\cap \frac{1}{C(n)}S^c_h[v]$. Hence,
 \begin{equation}\label{diamcompare}
 \text{diam}\left(V\cap \frac{1}{C(n)}S^c_h[v])\right)\geq \frac{1}{C(n)}\text{diam}\left(V\cap S^c_h[v])\right).
 \end{equation}

 %By 
 %the geometric decay of centered sections \cite[Lemma 7.6]{CM},  we may assume the diameter of 
 %$S^c_h[v]$ is so small such that
% $$C(n)S_h^c[v] \cap \Omega^*\subset V \ \text{and}\  C(n)S_h^c[v]  \cap \overline{\Omega}=\emptyset.$$
 %This implies that $C(n)S_h^c[v] \cap V=C(n)S_h^c[v] \cap \Omega^*$ is a convex set.
 Since $V$ is uniformly convex near $0$ and $v$ is strictly convex in $V$ near $0$,  we have
\begin{equation}\label{e20211}
 	\frac{|V\cap E|}{|E|} \geq C\left(\frac{\text{diam}(V\cap E)}{\text{diam}(E)}\right)^n.
 \end{equation}
For a proof of \eqref{e20211}, see \cite[Lemma 3.2]{C96}.
Note that the proof of \eqref{e20211} in \cite{C96} does not use the convexity of the target domain.

Suppose to the contrary that \eqref{UniDen} is not true.
Then by \eqref{ima2}, \eqref{diamcompare} and \eqref{e20211},
the quantity  $\frac{\text{diam}(V\cap S^c_h[v])}{\text{diam}(S^c_h[v])}$ is very small. 
Let $\lambda_1\ge\cdots\ge\lambda_n$ 
be the lengths of semi-axes of $E$ in the corresponding principal directions $\hat e_1, \cdots, \hat e_n$.
Let $L_h$ be the affine function such that $S_h^c[v]=\{v<L_h\}$.
Denote $x_h:=DL_h$. By \cite[Corollary 2.2]{C96} we have 
  \begin{equation}\label{ima1}
  \tilde{E}\subset Dv(S_h^c[v])\subset C \tilde{E},
  \end{equation}
where $C$ is a constant depending only on $n$, the constant $\lambda$ in \eqref{bdfg} but independent of  $v$ and $h$,
and $\tilde{E}$ is an ellipsoid with centre $x_h$, principal directions $\hat e_i$,
and lengths of semi-axes $\tilde\lambda_i \approx \frac{h}{\lambda_i}$,  $i=1,\cdots,n$. 
By \eqref{globlip}, we have  $Dv(S_h^c[v])\subset \overline\Omega.$
By Property $(ii)$ in \S\ref{s21},  
\begin{equation}\label{quad11}
v=\frac{1}{2}|y|^2+C\  \text{in any connected component of}\ \Omega\setminus\overline{U}
\end{equation}
and $S^c_h[v]\cap \Omega=\emptyset$  for $h$ small (see \eqref{localise}).
 Since $v\in C^1(\mathbb{R}^n)$ and $Dv(0)=x_0\in \Omega,$ we have that
 $Dv(B_r(0))\subset \Omega$ for $r$ sufficiently small. By the geometric decay of sections \cite[Lemma 7.6]{CM}, we have that $S_h^c[v]\subset B_r(0)$ provided $h$ is sufficiently small. Hence 
 $Dv(S^c_h[v])\subset \Omega.$
For any $y\in S^c_h[v],$ if $x:=Dv(y)\in\Omega\backslash\overline{U},$ then by \eqref{quad11} we have $Dv(x)=x=Dv(y),$ which implies
that the convex function $v$ is flat along the segment connecting $x$ and $y.$ This contradicts to \eqref{quad11}. 
Therefore 
\begin{equation}\label{cont121}
Dv(S_h^c[v])\subset \overline{U}\cap \Omega
\end{equation}
 provided $h$ is sufficiently small.
 %We can further deduce that $Dv(S_h^c[v])\subset \overline{U}$. 
 %{\color{blue} Indeed for $h$ small enough, if there exists a point $y\in S^c_h[v]$ such that $x:=Dv(y)\in \Omega\setminus\overline{U},$ then by \eqref{quad11} one has $Dv(x)=Dv(y),$ which implies
%that the convex function $v$ is flat along the segment connecting $x$ and $y.$ This contradicts to \eqref{quad11}. }
%% 
\renewcommand{\figurename}{Fig.}
\renewcommand{\captionlabeldelim}{}
\begin{figure}[h]
	\centering
	\includegraphics[width=1.0\textwidth]{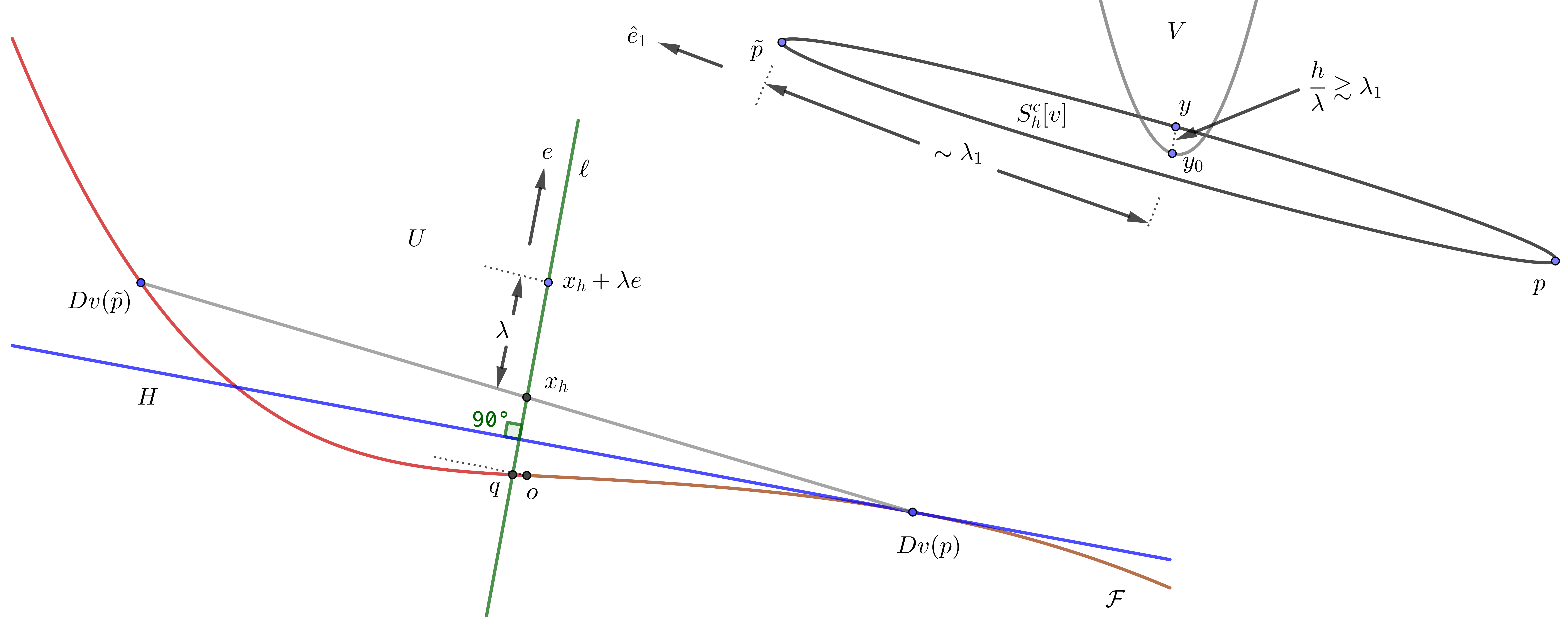}
	\caption{}
    \label{figuni}
\end{figure}

Let $p, \tilde{p}$ be the points on $\partial S_h^c[v]$ such that 
\begin{align}\label{pp}
   p\cdot \hat e_1 &=\inf\{y\cdot \hat e_1 \,:\, y\in S_h^c[v]\},\\
   \tilde{p}\cdot \hat e_1 &=\sup\{y\cdot \hat e_1 \,:\,  y\in S_h^c[v]\}. \nonumber
\end{align}
Since $\lambda_1$ is the longest axis of $E$ 
and $\frac{\text{diam}(V\cap S^c_h[v])}{\text{diam}(S^c_h[v])}$ is sufficiently small, 
we must have $p, \tilde{p}\in \mathbb{R}^n\setminus \overline{V}$, and hence $Dv(p), Dv(\tilde{p})\in \mathcal{F}$, (see Fig. \ref{figuni}). Indeed, by the same argument for the proof of \eqref{cont121}, we have that $Dv(p), Dv(\tilde{p})\in \Omega\cap\overline{U}.$  Suppose to the contrary that $Dv(p)\notin \mathcal{F},$ then $Dv(p)$ must be in the interior of $U.$ Since $Dv$ is $1-1$ from $\overline{V}$ to $\overline{U},$ there exists $q\in \overline V$ such that $Dv(q)=Dv(p):=x.$ Since $u$ is the Legendre dual of $v,$ we have that $\partial u(x)$ contains at least two points $q$ and $p,$ contradicting to the $C^1$ regularity of $u$ at $x.$ 
Hence $Dv(p)\in \mathcal{F}.$ The same argument works for $ Dv(\tilde{p})\in \mathcal{F}.$

%{\color{blue} Indeed, by the same argument as for the proof of \eqref{cont121}, we have $Dv(p)\in \Omega\cap\overline{U}.$  Suppose to the contrary that $Dv(p)\notin \mathcal{F},$ then $Dv(p)=:x$ must be in the interior of $U.$ Since $Dv$ is $1-1$ from $\overline{V}$ to $\overline{U},$ there exists a point $y\in \overline V$ such that $Dv(y)=x=Dv(p)$. Since $u$ is the Legendre dual of $v$ on $\overline U$ (see Remark \ref{uvu*}), we see $\partial u(x)$ contains two different points $y$ and $p$, which contradicts to the $C^1$ regularity of $u$ at $x.$ 
%Hence $Dv(p)\in \mathcal{F}.$ The same argument works for $ Dv(\tilde{p})\in \mathcal{F}$ as well. }

%%% From \eqref{ima1} we know that $Dv(S_h^c[v])$ is similar to the ellipsoid $\tilde E$ centred at $x_h$. 
From \eqref{pp} we know that $D(v-L_h)(p)$ and $D(v-L_h)(\tilde p)$ are parallel to $\hat e_1$, namely $Dv(p)$, $Dv(\tilde{p})$ and $x_h$ lie on a straight line. By \eqref{ima1}, 
	\begin{equation}\label{pplength}
		|Dv(p)-x_h|\approx |Dv(\tilde{p})-x_h|\leq C \frac{h}{\lambda_1}.
	\end{equation}
Let $H$ be the tangent plane of $\mathcal{F}$ at $Dv(p)$, and $\ell$ be the straight line passing through $x_h$ and perpendicular to $H$. Denote $q:=\ell\cap\mathcal{F}$ and $e:=\frac{x_h-q}{|x_h-q|}$.
From $ii)$ of Theorem \ref{CMCL}, $\mathcal{F}$ is locally a $C^{1,\alpha'}$ graph in the direction $e$.
Since the points $Dv(p), Dv(\tilde p), q$ lie on $\mathcal{F}$, 
by \eqref{pplength} and the Lipschitz continuity of $\mathcal F$, we obtain
 $$ |x_h-q|\leq C \frac{h}{\lambda_1}$$ 
for some constant $C$ independent of $h$.   

Let $\lambda'$ be the largest number such that $x_h+\lambda' e\in \overline{Dv(S_h^c[v])}$.   
For $h>0$ small, we have $x_h+\lambda' e\in U.$
From \eqref{ima1}, $Dv(S_h^c[v])$ is ``centred" about $x_h.$ 
Note that by \eqref{ima1} and \eqref{cont121} we have
\begin{equation} \label{liebelow}
\tilde{E}\subset Dv(S_h^c[v])\subset \overline{U}\cap \Omega.
\end{equation}
%{\color{blue} and $\tilde E$ is centred at $x_h$. From \eqref{liebelow} we see that $x_h$ lies strictly above the free boundary. Geometrically, $x_h+\lambda' e$ is a boundary point of $Dv(S_h^c[v])$ opposite to the point $q$, (see Fig. \ref{figuni}).}
From \eqref{liebelow} we see that $x_h,$ the centre of $\tilde{E},$  strictly lies above the free boundary.
It follows that $q$ is outside $\tilde{E}.$
Denote by $\tilde{q}$ the intersection of the segment $\overline{x_h{q}}$ with $\partial\tilde{E}.$ Then,  by \eqref{ima1} 
we have that $\tilde{q}$ and $x_h+\lambda' e$ are balanced around $x_h,$ namely, $|x_h-\tilde{q}|\approx |x_h+\lambda' e-x_h|=\lambda'.$ Hence $\lambda'\leq C|x_h-\tilde q|\leq C|x_h-q|.$
Thus by \eqref{ima1} and \eqref{liebelow} we have  
	\begin{equation}\label{eslab}
		\lambda'\leq C|x_h-q|\leq C \frac{h}{\lambda_1}.
	\end{equation}
Let  $y\in V$ be the point such that $Dv(y)=x_h+\lambda e$.   
By the definition of $\lambda'$,  we have $y\in V\cap \partial S_h^c[v].$
By the convexity of $v$, we have 
	$$ |y|\cdot |D(v-L_h)(y)| \geq |(v-L_h)(0)| = h. $$
Since $D(v-L_h)(y) = \lambda' e$, we obtain $\lambda|y|\geq h$. 
Hence from \eqref{eslab}
	$$|y|\geq  \frac{h}{\lambda'}\geq  \frac{1}{C}\lambda_1 $$ 
for some constant  $C$ independent of $h$.
That is $ \frac{|y|}{\lambda_1}\ge C^{-1}$, which contradicts to the assumption 
  $\frac{\text{diam}(V\cap S^c_h[v])}{\text{diam}(S^c_h[v])}$ is very small.
 \end{proof}
 
In this paper,  
the notation $a\lesssim b$ (resp. $a\gtrsim b$) means that there exists a constant $C>0$ independent of $h$ and the potential functions $u$ and $v$,
such that $a\leq Cb$ (resp. $a\geq Cb$), and the notation $a\approx b$ means that $C^{-1}a\leq b\leq Ca$, where $a, b$ are both positive constants.
Given a convex domain $D\subset\R^n$, 
we say that $D$ has a good shape if the eccentricity of its minimum ellipsoid is uniformly bounded.

\begin{corollary}\label{co21}
Under the conditions in Lemma \ref{ud1}, we have
\begin{itemize}
\item[$(i)$] \emph{Volume estimate:}
	\begin{equation}\label{nm1}
 		|S_h[v](y_0)|\approx |S_h^c[v](y_0)\cap V|\approx |S_h^c[v](y_0)|\approx h^{\frac{n}{2}}.
	\end{equation} 
Moreover, for any given affine transform $\mathcal{A}$, 
if one of $\mathcal{A}(S_h^c[v](y_0))$ and $\mathcal{A}(S_h[v](y_0))$ has a good shape, so is the other one.

\item[$(ii)$] \emph{Tangential $C^{1,1-\epsilon}$ regularity for $v$:} Assume in addition that $f\in C(\overline{\Omega}),\ g\in C(\overline{\Omega^*})$. Let $\mathcal{H}$ be the tangent hyperplane of $\partial\Omega^*$ at $y_0$.   
  Then $\forall\,\epsilon>0$,  $\exists \,C_\epsilon$ such that 
 \begin{equation}\label{tanc2}
 B_{C_\epsilon h^{\frac{1}{2}+\epsilon}}(y_0) \cap \mathcal{H} \subset S_h^c[v](y_0) \ \ \ \text{for $h>0$ small.} 
 \end{equation}
%{\color{red} here $0$ should also be replaced by $y_0$?} 
%\item[$(iii)$] The estimate \eqref{tanc2} also holds in the sub-level $S_h[v]$, namely 
%\beq
 %B_{C_\epsilon h^{\frac{1}{2}+\epsilon}}(0)\cap \partial V \subset S_h[v]\quad \ \text{for $h>0$ small.} 
 %\eeq
\end{itemize}
\end{corollary}

\begin{proof}  
As in the proof of Lemma \ref{ud1},
let us assume that $y_0=0\in  \partial V\setminus \overline{\partial V\cap \Omega^*}\subset \partial\Omega^*$ 
and write $S_h^c[v](0), S_h[v](0)$ as $S_h^c[v], S_h[v]$ for brevity. 
By the strict convexity estimate of $v$  in $\overline{V}$ (see \eqref{strictconvex}) and the fact that $S^c_h[v]$ is balanced around $0$,
we have an equivalence relation between $S_h[v]$ and $S^c_{h}[v]$: 
\beq\label{equi0} 
S^c_{b^{-1}h}[v] \cap V \subset S_h[v] \subset S^c_{bh}[v] \cap V\ \ \ \forall\, h>0 \text{ small},
\eeq
where $b\geq1$ is a constant independent of $h$. 
For a proof of \eqref{equi0}, we refer the reader to  \cite[Lemma 2.2]{CLW1}.
%{\Small\color{blue}{(the strict convexity of $v$ seems not a main reason for \eqref{equi0} )}}

From Lemma \ref{ud1} and \eqref{equi0}, the volume estimate \eqref{nm1} 
can be deduced similarly as in \cite[Corollary 3.1]{C96}. 
Note that by \eqref{localise} we have that $\det\, D^2v=\tilde{f}(y)\chi_{S^c_h[v]\cap\Omega^*}$
in $S_h^c[v],$ where $\tilde{f}(y)=\frac{g(y)}{f(Dv(y))}\in C(S^c_h[v]\cap \overline{\Omega^*}).$ 
Then, the proof of tangential $C^{1,1-\epsilon}$ estimate  is the same as in \cite[Lemma 4.1]{C96}.
\end{proof}

%\begin{notation}\label{uvxy}
%For clarity, we will use $x$ for variables of $u$ and $y$ for variables of $v$. 
%Accordingly, we will use $x$ to denote points in $S_h[u]$ and $y$ for points in $S_h[v]$.
%But $x, y$ are in the same coordinates of  $\R^n$.
%We also use $p, q,$ etc to denote a point in $\R^n,$ and use $e_i$ to denote the unit vector on the positive $x_i$-axis for $x=1,\cdots, n.$
%\end{notation}

\section{$C^{1,1-\epsilon}$ regularity of $\mathcal{F}$}\label{S5}

In this section, we establish the  $C^{1,1-\epsilon}$ regularity of the free boundary $\mathcal{F}$ for any $\epsilon>0$.
%To do this, we assume the unit inner normal $\nu_{_U}(x_0)$ of $U$ at the point $x_0\in\mathcal{F}$ and the unit normal $\nu_{_V}(y_0)$ of $V$ at its image $y_0=Du(x_0)$ satisfy the ``obliqueness" condition, i.e.
%\beq \label{obq-n00}
%\nu_{_U} (x_0)\cdot \nu_{_V} (y_0)>0.
%\eeq
To do this, we assume that the ``obliqueness" property holds, namely at any point $x_0\in\mathcal{F}$ and its image $y_0=Du(x_0)$,
\beq \label{obq-n00}
\nu_{_U} (x_0)\cdot \nu_{_V} (y_0)>0,
\eeq
where $\nu_{_U}(x_0)$ is the unit inner normal of $U$ at $x_0$ and $\nu_{_V}(y_0)$ is the unit inner normal of $V$ at $y_0$. 
This assumption will be verified in the last  section \S\ref{Sob}.
Under the condition \eqref{obq-n00}, the boundary value problem \eqref{MAEv}
% and \eqref{bdy-u}
is locally an oblique derivative problem of the Monge-Amp\`ere equation.
%{\color{purple} Again as in the introduction, I don't think this sentence is correct ...}

%Later in \S\ref{Sob}, we will show that this is always the case.   

\begin{theorem}\label{C11free}
Assume that $\Om, \Om^*\subset\mathbb{R}^n$ are uniformly convex domains with $C^2$ boundaries,
$f\in C(\bom)$, $g\in C(\overline{\Omega^*})$ are positive and continuous, and \eqref{obq-n00} holds.
Then $\mathcal{F}$ is $C^{1,1-\epsilon}$ smooth, for any small $\epsilon\in (0,1)$.
\end{theorem}

\begin{remark}\label{r3.1}
There is no $C^{1,1}$ estimate for the oblique derivative problem of the Monge-Amp\`ere equation.
Indeed, let 
$u(x)=(1+x_n^2)\, \big(\sum_{i=1}^{n-1} x_i^2\big)^{1-\frac 1n}$, $n\ge 3$.
Then in $\Om:=B(0, 1/n)$, $u$ satisfies
$$\det\,(D^2 u)=(4-4/n)^{n-1}(1+x_n^2)^{n-2}(1-2/n-(3-2/n)x_n^2)>0.$$
On the boundary $\p\Omega\cap\{\sum_{i=1}^{n-1}x_i^2<n^{-2}\}$, let
$$\beta (x)=\big(\beta_1(x),\cdots, \beta_n(x)\big)
=\Big(\frac{n}{n-1}\frac{x_1x_n}{1+x_n^2},\cdots,
	      \frac{n}{n-1}\frac{x_{n-1}x_n}{1+x_n^2}, -1 \Big). $$ 
Then $\beta(x)$ is smooth and
$$\frac{\p u}{\p\beta}(x)=\sum_{i=1}^n\beta_i\frac{\p u}{\p x_i}(x)=0.$$
 Let $\mathcal{N}_r:=\partial\Omega\cap\{\sum_{i=1}^{n-1}x_i^2<r^2\}\cap\{x_n>0\}$ for $r<n^{-1}$. Then 
	$$ \beta(x) \cdot \nu(x) > 0\quad \forall\, x\in\mathcal{N}_r, $$
where $\nu(x)$ is the unit inner normal vector at $x\in\mathcal{N}_r$. 
However, $u$ is not  $C^{1,\alpha}$ at $\hat x$ for any $\alpha>1-2/n$, where $\hat x=(0,\cdots,0,n^{-1})\in\mathcal{N}_r$ is the north pole. 
This function $u$ is Pogorelov's counter-example to the interior regularity of the Monge-Amp\`ere equation.
In \cite{U2}, an additional condition is imposed to obtain the $C^{1,1}$ a priori estimate.
\end{remark}

By \eqref{normalformular}, it suffices to show that $Du$ is $C^{1-\epsilon}$ along the free boundary $\mathcal{F}$.
For any $x_0\in \mathcal{F}$, we have $y_0=Du(x_0)\in   \partial V\setminus \overline{\partial V\cap \Omega^*}\subset \partial\Omega^*.$   
First we show that under the hypothesis \eqref{obq-n00}, there exists an affine transform $A$ with $\det\, A=1$ such that $\nu_{_U} (x_0)$ and $\nu_{_V} (y_0)$ become parallel. 
Indeed, by \eqref{obq-n00} without loss of generality we assume 
$$\nu_{_U} (x_0)=e_n=(0,\cdots,0,1) \quad\mbox{ and }\quad \nu_{_V} (y_0)=(0,\cdots,0,\sin\theta,\cos\theta)$$ 
for a $\theta\in(-\pi/2,\pi/2)$.
Let
\begin{equation}\label{matA}
	A =
\left(
\begin{array}{c|cc}
\mathbf{1}_{n-2}  & & \\
\hline 
 & 1 & c    \\
 & 0 & 1   
\end{array}
\right), \quad \tilde x=Ax, \quad \tilde y = (A^t)^{-1} y,
\end{equation}
where $\mathbf{1}_{n-2}$ is the $(n-2)\times(n-2)$ identity matrix, and the constant $c=-\tan\theta$.
By calculation,  
	$$ \tilde\nu = \frac{(A^t)^{-1}\nu_{_{\tilde U}} (\tilde x_0)}{|(A^t)^{-1}\nu_{_{\tilde U}} (\tilde x_0)|}=e_n \quad\mbox{ and }\quad \tilde\nu^*=\frac{A\nu_{_{\tilde V}}(\tilde y_0)}{|A\nu_{_{\tilde V}}(\tilde y_0)|}=e_n $$ 
are the unit inner normals of $\tilde{U}:=AU$ at $\tilde x_0$ and $\tilde{V}:=(A^t)^{-1}V$ at $\tilde y_0$, respectively. 
See \cite[(4.7)]{CW1} for more details. 
Denote $\tilde u(\tilde x) = u(A^{-1}\tilde x)$, $\tilde f(\tilde x) = f(A^{-1}\tilde x)$, $\vec f(\tilde x)=f(A^t\tilde x),$
$\tilde v(\tilde y) = v(A^t\tilde y),$ $\tilde g(\tilde y) = g(A^t\tilde y)$ and $\vec g(\tilde{y})= g(A^{-1}\tilde y).$
Then correspondingly, \eqref{push} becomes 
\begin{equation}\label{newpush}
\begin{split}
 (D\tilde{u})_\# \left(\tilde{f}\chi_{\tilde{U}}+\vec{g}\chi_{_{A({\Omega}^*\setminus {V})}}\right) &=\tilde{g}\chi_{\tilde\Omega^*}, \\
 (D\tilde{v})_\# \left(\tilde{g}\chi_{\tilde{V}}+\vec{f}\chi_{_{(A^t)^{-1}({\Omega}\setminus {U})}}\right) &=\tilde{f}\chi_{\tilde{\Omega}},
\end{split}
\end{equation}
where $\tilde{\Omega}=A\Omega$ and  $\tilde\Omega^*=(A^t)^{-1}\Omega^*.$

Next, we make the translations by letting 
\begin{equation}\label{pingyi}
\begin{split}
	\hat x &= T_1(\tilde x) = \tilde x - \tilde x_0,\\ 
	\hat y &= T_2(\tilde y) = \tilde y-\tilde y_0,
\end{split}
\end{equation} 
and define
\begin{align*}
	\hat u (\hat x) &= \tilde u(\tilde x) - \tilde x\cdot\tilde y_0 \\
	\hat v (\hat y) &= \tilde v(\tilde y).
\end{align*}
By subtracting a constant and change of coordinates, we may assume that $\hat u(0)=\hat v(0)=0$, and $\hat u, \hat v\geq0$. 
Denote $\hat f(\hat x)=\tilde f(\hat x+\tilde x_0)$, $\check{f}(\hat x)=\vec{f}(\hat x+\tilde y_0)$, $\hat g(\hat y)=\tilde g(\hat y+\tilde y_0)$ and $\check{g}(\hat y)=\vec{g}(\hat y+\tilde x_0)$.
Denote also $ \hat{\mathcal{F}}=A\mathcal{F}-\{\tilde{x}_0\}, \hat{\Omega}=\tilde\Omega-\{\tilde{x}_0\}, \hat{\Omega}^*=\tilde\Omega^*-\{\tilde{y}_0\},$ $\hat{U}=\tilde U-\{\tilde{x}_0\}$ and $\hat{V}=\tilde V-\{\tilde{y}_0\}$.
%{\color{purple} or better to write like? Denote $\hat f(\hat x)=\tilde f(T_1^{-1}\hat x)$, $\check{f}(\hat x)=\vec{f}(T_2^{-1}\hat x)$, $\hat g(\hat y)=\tilde g(T_2^{-1}\hat y)$ and $\check{g}(\hat y)=\vec{g}(T_1^{-1}\hat y)$.}
Then correspondingly, \eqref{newpush} becomes 
\begin{equation}\label{pushpush}
\begin{split}
 (D\hat {u})_\# \left(\hat{f}\chi_{\hat{U}}+\check{g}\chi_{_{T_1(A({\Omega}^*\setminus {V}))}}\right) &=\hat{g}\chi_{\hat\Omega^*}, \\
 (D\hat{v})_\# \left(\hat{g}\chi_{\hat{V}}+\check{f}\chi_{_{T_2((A^t)^{-1}({\Omega}\setminus {U}))}}\right) &=\hat{f}\chi_{\hat{\Omega}}.
\end{split}
\end{equation}
Note that $\hat{u}, \hat{v}, \hat{\mathcal{F}}, \hat{\Omega}, \hat{\Omega}^*, \hat{U}$ and $\hat{V}$ have the same regularity as $u, v, \mathcal{F}, \Omega,\Omega^*, U$ and $V.$ For simplicity of notations we still denote them by $u, v, \mathcal{F}, \Omega,\Omega^*, U, V.$ 

%Let $\tilde{u}(x)=u(A^{-1}x)$, $\tilde{v}(y)=v(A^ty)$ and $\tilde{\mathcal{F}}=A\mathcal{F}$. 
%$\tilde{\Omega}=A\Omega$ and $\tilde{\Omega}^*=(A^t)^{-1}\Omega^*.$ 
%Let $\tilde{f}(x)=f(A^{-1}x)$ and $\tilde{g}(y)=g(A^ty).$
%Note that the above translation and transformation is merely for simplifying the notations. 
%In particular, $\tilde{u}, \tilde{v}$ and $\tilde{\mathcal{F}}$ have the same regularity of $u, v$ and $\mathcal{F}$ as in the original partial transport problem \eqref{push}. 
%Hence, we still denote them by $u, v, \mathcal{F}$.

%Then correspondingly, \eqref{push} becomes 
%\begin{align*}
%& (D\tilde{u})_\# \left(\tilde{f}\chi_{\tilde{U}}+\tilde{g}\chi_{\tilde{\Omega}^*\setminus \tilde{V}}\right)=\tilde{g}\chi_{\tilde\Omega^*}, \\
%& (D\tilde{v})_\# \left(\tilde{g}\chi_{\tilde{V}}+\tilde{f}\chi_{\tilde{\Omega}\setminus \tilde{U}}\right)=\tilde{f}\chi_{\tilde{\Omega}}.
%\end{align*}
%The properties $(i)$--$(iii)$ following \eqref{push} also hold, particularly $\tilde{u}^*=\tilde{v}$ in $\tilde{V}$ and $\tilde{u}^*$ is strictly convex in $\tilde{\Omega}^*.$ 
%Note that $\tilde{u}, \tilde{v}, \tilde{\mathcal{F}}, \tilde{\Omega}, \tilde{\Omega}^*, \tilde{U}$ and $\tilde{V}$ have the same regularity as $u, v, \mathcal{F}, \Omega,\Omega^*, U$ and $V.$ For simplicity of notations we still denote them by $u, v, \mathcal{F}, \Omega,\Omega^*, U, V.$ 

%%
%\renewcommand{\figurename}{Fig.}
%\renewcommand{\captionlabeldelim}{}
%\begin{figure}[h]
%	\centering
%	\includegraphics[width=0.7\textwidth]{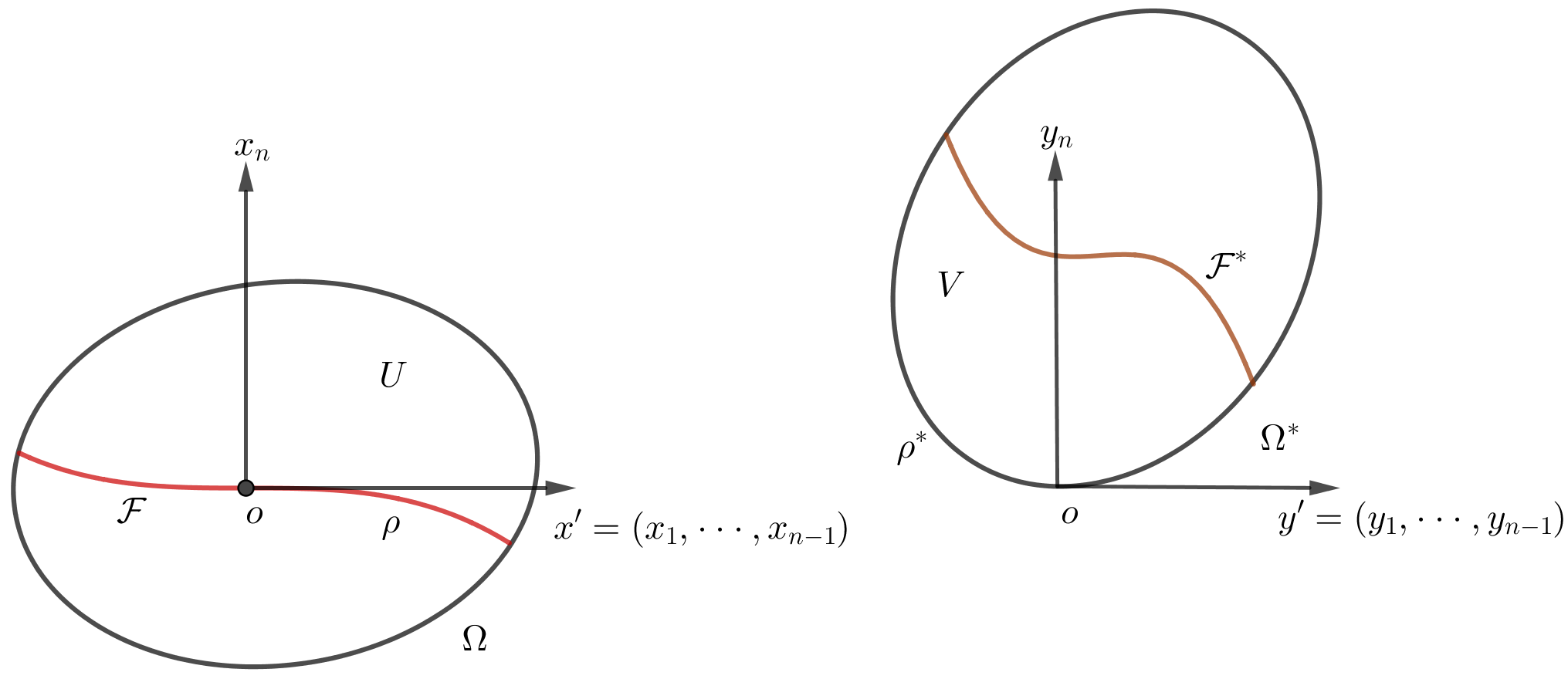}
%	\caption{}
%    \label{figtan}
%\end{figure}

By the above transformation and change of coordinates, we can assume that $\nu_{_U} (0)=\nu_{_V}(0)=e_n$,  
and locally near $0$, $\partial U$ and $\p V$ are represented as
\begin{align*}
 \partial U & = \left\{ x \,:\, x_n=\rho(x'),\ \ x'=(x_1,\cdots,x_{n-1}) \right\},\\
 \partial V & = \left\{ y \,:\, y_n=\rho^*(y'),\ \ y'=(y_1,\cdots,y_{n-1}) \right\}, 
 \end{align*}
where the function $\rho$ satisfies $\rho(0)=0$, $D\rho(0)=0$.
By $ii)$ of Theorem \ref{CMCL} and the interior ball property of $\mathcal{F}$,  we have
 \begin{equation}\label{ziyou1}
 -C|x'|^{1+\alpha'}\leq \rho(x')\leq C|x'|^2 \quad\text{ for some } \alpha'\in(0,1).
 \end{equation}
Meanwhile, the function $\rho^*$ satisfies $\rho^*(0)=0$, $D\rho^*(0)=0$;
and by the $C^2$ regularity and uniform convexity of $\p\Om^*$, we also have
  \begin{equation}\label{ziyou2}
  \frac{1}{C}|y'|^2\leq \rho^*(y')\leq C|y'|^2.
\end{equation}

In the following we aim to prove Theorem \ref{C11free}, or equivalently the $C^{1,1-\eps}$ regularity of $u$. 
Due to the lack of convexity and regularity of the free boundary $\mathcal{F}$,
we need careful analysis of the local geometry of the functions $u, v$.

\begin{lemma} \label{tc12}
For any $\epsilon>0$ small, there exists a constant $C_\epsilon$ such that
\beq\label{ustconv}
	u(x)\geq C_\epsilon |x'|^{2+\epsilon}\ \ \ \text{for}\ x\in U\  \text{near} \ 0.
\eeq  
\end{lemma}

\begin{proof}
Let $x=(x',x_n)\in U$ be a point near the origin and $|x'|\neq0$. (For $|x'|=0$, \eqref{ustconv} is trivially true.)
Denote $e:=\frac{(x',0)}{|(x',0)|}$ a unit vector in $\text{span}\{e_1,e_2,\cdots, e_{n-1}\}$, 
such that $x=|x'|e+x_ne_n$. 
Consider $z=te+\rho^*(te)e_n\in \partial V$ for some small $t>0$ to be determined. 

Given any $\epsilon>0$ small,
by \eqref{tanc2} and \eqref{secrela}, we have $v(te)\leq C_\epsilon t^{2-\epsilon}$.   
Since $Dv(\mathbb{R}^n)\subset \Omega$ is bounded, from \eqref{ziyou2} we have
\begin{align*}
v(z) &\leq v(te)+|v(z)-v(te)|\\
&\leq v(te)+C\rho^*(te)\\
&\leq C_\epsilon t^{2-\epsilon}+Ct^2\leq 2C_\epsilon t^{2-\epsilon} .
\end{align*} 
By the duality and noting that $u^*=v$ in $V$ (see Remark \ref{uvu*}), we then obtain 
% {\Small\color{blue} (is $u$ the duality of $v$?  $u, u^*, v, v^*$ in this paper can be rather confusing.)}
\begin{align*}
u(x)&= \sup_{y\in V}\left\{x\cdot y-v(y)\right\}\\
&\geq x\cdot z-v(z)\\
&\geq x\cdot \big(te+\rho^*(te)e_n\big)-C_\epsilon |t|^{2-\epsilon}\\
&\geq t|x'|-C|x_n|t^2-C_\epsilon |t|^{2-\epsilon}.
\end{align*}
Since $x\in U$ is close to $0$, by choosing $t=|x'|^{1+3\epsilon}$,  we thus obtain
\begin{align*}
	u(x) &\geq |x'|^{2+3\epsilon} - C|x'|^{2+6\epsilon}-C_\epsilon |x'|^{2+5\epsilon-3\epsilon^2} \\
		&\geq C|x'|^{2+3\epsilon}
\end{align*}
provided $|x|$ is sufficiently small. 
Hence we have the desired estimate.
\end{proof}

\begin{lemma}\label{normal1} 
For any $\epsilon>0$ small, there exists a constant $C_\epsilon$ 
 such that 
 $$ u(te_n) \leq C_\epsilon |t|^{2-\epsilon} \quad  \text{ for }  |t| \ \text{small}.$$
\end{lemma}

\begin{proof}
Let $q\in \partial S_h[v]$ be the point such that 
\begin{equation}\label{topq}
q_n=\sup\left\{y_n \,:\, y\in S_h[v]\right\}.
\end{equation}
By \eqref{equi0}, $q\in S_{bh}^c[v].$
By \eqref{nm1} and \eqref{tanc2}, we have 
$$q_n\leq C_\epsilon\frac{|S_{bh}^c[v]|}{ h^{(\frac{1}{2}+\epsilon)(n-1)}}
   \leq C_\epsilon\frac{h^{\frac{n}{2}}}{h^{(\frac{1}{2}+\epsilon)(n-1)}}
       =C_\epsilon h^{\frac{1}{2}-(n-1)\epsilon}.$$
Let $y\in \Omega^*$ be a point near the origin such that $v(y)=h$. 
The above estimate implies that 
$$y_n\leq q_n\leq C_\epsilon h^{\frac12-\epsilon}$$ 
for any given $\epsilon>0$ small. Hence we have
\begin{equation}\label{lb110}
v(y) \geq C_\epsilon |y_n|^{2+\epsilon}\ \ \text{for $y\in \Omega^*$ near the origin. }
\end{equation}
By properties ($i$) and ($iii$) before Remark \ref{uvu*} we then have
\begin{equation}\label{lb11}
u^*(y) \geq C_\epsilon |y_n|^{2+\epsilon}\ \ \text{for all}\ y\in \Omega^*.
\end{equation}
By duality and \eqref{lb11}, we then obtain
\begin{align*}
u(te_n)&= \sup_{y\in\Om^*}\left\{ te_n\cdot y - u^*(y) \right\} \\
&\leq  \sup_{y\in\Om^*}\left\{ t y_n - C_\epsilon |y_n|^{2+\epsilon} \right\} \\
&\leq  \sup_{y_n\in \mathbb{R}}\left\{ t y_n - C_\epsilon |y_n|^{2+\epsilon} \right\} \\
&\leq C_\epsilon |t|^{2-\epsilon}
\end{align*} 
for $|t|$ small.  
 % {\Small\color{red}(using the properties (i) and (iii) before remark 2.1, we can prove the estimates for all $t$ small, no matter its positive or negative)}
\end{proof}

Similarly to \eqref{sect} and \eqref{sub}, we can define the sub-level sets $S^c_h[u](x_0)$ and $S_h[u](x_0)$ for $u$.
Note that $S^c_h[u](x_0)$ is always convex but $S_h[u](x_0)$ may not be convex if  the free boundary $\F$ is not convex near $x_0$.
%%% By the estimates in Lemmas \ref{tc12} and \ref{normal1}, we are able to derive the uniform density property for $u$ as follows.

\begin{lemma} \label{ud3}
%Let $x\in\mathcal{F}$. 
For any $h>0$ small, we have
$$\frac{|S^c_h[u]\cap U|}{|S^c_h[u]|}\geq \delta_0$$ 
for a constant $\delta_0>0$ independent of $h$, 
where $S^c_h[u]=S^c_h[u](0)$.
\end{lemma}

\begin{proof}
%By a change of coordinates, we assume that $x=0$. 
Let $z=se_n, \tilde{z}=-\tilde{s}e_n$ be the intersections of $\partial S_h^c[u]$ and the $x_n$-axis, where $s, \tilde{s}>0.$
Since $S_h^c[u]$ is balanced around $0$,  we have $s\approx \tilde{s},$
and either $u(z)\geq Ch$ or $u(\tilde{z})\geq Ch.$
Then by Lemma \ref{normal1}, we obtain
\begin{equation}\label{gs1}
s\approx \tilde{s}\geq C_\epsilon h^{\frac{1}{2}+\epsilon}
\end{equation}
for any given $\epsilon>0$ small.

%By the strict convexity of $u$ in $\overline U$, the equivalence relation \eqref{equi0} also holds for sub-level sets of $u$. 
By Remark \ref{uniest11} and Lemma \ref{tc12} we have
\begin{equation}\label{gs2}
S^c_h[u]\cap U\subset S_{Ch}[u]\cap U\subset \big\{x \,:\, |x'|<C_\epsilon h^{\frac{1}{2}-\epsilon} \big\}.
\end{equation}
Recall that  $\rho(x')\leq C|x'|^2$ from \eqref{ziyou1}. 
Given any $x$ in the closure of $S_h^c[u]\cap \{x \,:\, x_n\geq C'h^{1-2\epsilon}\}\cap \overline{U},$ by \eqref{gs2}
we have that $|x'|<C_\epsilon h^{\frac{1}{2}-\epsilon},$ which implies $\rho(x')< C' h^{1-2\epsilon}\leq x_n,$
 where $C'=2CC_\epsilon^2.$
Hence $x\in U.$ This implies that 
\begin{equation}\label{cptinc121}
S_h^c[u]\cap \{x \,:\, x_n\geq C'h^{1-2\epsilon}\}\cap \overline{U}\subset\subset U.
\end{equation}
Now, if there is some $x\in S_h^c[u]\cap \{x \,:\, x_n\geq C'h^{1-2\epsilon}\}\backslash U,$ the segment connecting 
$x$ and $z$ will intersect $\partial U$ at some point $y.$ 
Since $x, z \in  S_h^c[u]\cap\{x \,:\, x_n\geq C'h^{1-2\epsilon}\},$
by convexity of $u,$ we have that $z\in S_h^c[u]\cap \{x \,:\, x_n\geq C'h^{1-2\epsilon}\}\cap \partial U,$ 
which contradicts to \eqref{cptinc121}.
Hence, we have   
\begin{equation}\label{gs3}
S_h^c[u]\cap \{x \,:\, x_n\geq C'h^{1-2\epsilon}\}\subset U.
\end{equation}
This implies that a large portion of $S^c_h[u]$ is contained in $U$, see Fig. \ref{figud}. 
%{\color{blue}
%In fact, to see \eqref{gs3} we suppose to the contrary that there exists a point $x\in S_h^c[u]\cap \{x \,:\, x_n\geq C'h^{1-2\epsilon}\}$ but not belong to $U$. Then the segment connecting $x$ and $z$ will intersect $\partial U$ at a point $y\in S_h^c[u]\cap \mathcal{F}$. Since $x_n\geq C'h^{1-2\epsilon}$, by \eqref{gs1} we have $y_n\geq C'h^{1-2\epsilon}$. Choosing $C'=4CC_\epsilon^2$, from \eqref{ziyou1} we obtain 
%	$$ |y'|\geq(C'/C)^{1/2}h^{\frac12-\epsilon}=2C_\epsilon h^{\frac12-\epsilon}, $$
%which contradicts \eqref{gs2}, and thus the claim \eqref{gs3} holds. 
%}

\renewcommand{\figurename}{Fig.}
\renewcommand{\captionlabeldelim}{}
\begin{figure}[h]
	\centering
	\includegraphics[width=0.7\textwidth]{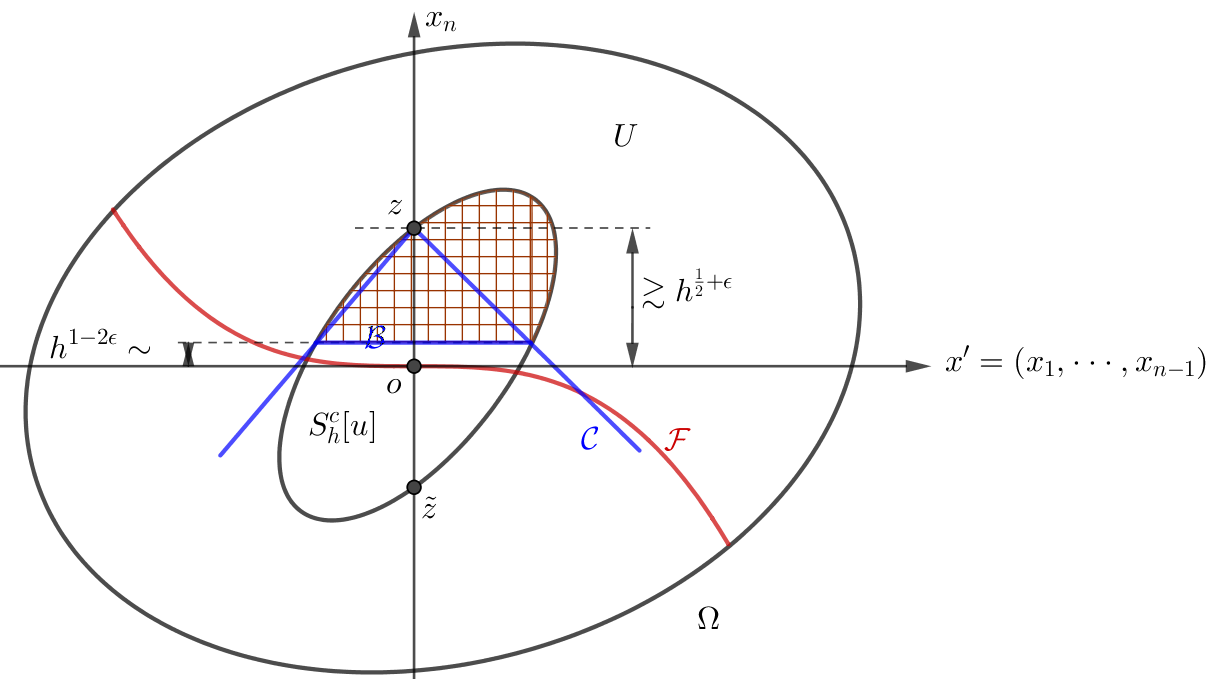}
	\caption{}
    \label{figud}
\end{figure}

By John's Lemma, there exists an ellipsoid $E$ centred at $0$, 
such that $E\subset S_h^c[u]\subset CE$ for a constant $C$ depending only on $n.$
From \eqref{gs1}, $s \gg h^{1-2\epsilon}$ for $h$ small.
By the convexity of $S_h^c[u]$ and \eqref{gs3}, we have
	\begin{equation}\label{ud11}
	\begin{split} 
		\left| S_h^c[u]\cap U \right| &\geq \left| S_h^c[u]\cap \{x_n\geq C'h^{1-2\epsilon}\} \right| \\
		&\geq |E\cap  \{x_n\geq C'h^{1-2\epsilon}\} |\\
		&\geq c\frac{s-h^{1-2\epsilon}}{s}|E|\\
				&\geq \frac{1}{2}c|S_h^c[u]|,
	\end{split}
	\end{equation}
where the constant $c>0$ only depends on $n.$
 %and $|S'_h|$ denotes the volume of $S'_h$ in $\R^{n-1}$. 
%Again, by the convexity of $S_h^c[u]$, we also have
%\begin{equation}\label{ud22} 
%	|S^c_h[u]| \leq C_1 s |S_h'|,
%\end{equation}
%for a constant $C_1$ depending only on $n$.
%Combining inequalities \eqref{ud11} and \eqref{ud22}, we then obtain
Hence $\frac{|S^c_h[u]\cap U|}{|S^c_h[u]|}\geq c/2$.   
\end{proof}

\begin{remark}
Since $z_n=s \gg h^{1-2\epsilon}$ for $h$ small,  by \eqref{gs3} and the strict convexity of $u$ in $U$,  
we see that $S^c_h[u]$ converges to $\{0\}$ as $h\rightarrow 0$. 
\end{remark}

\begin{corollary}\label{volume1}
We have the following estimates for $h>0$ small. 
\begin{itemize}
\item[$(i)$] \emph{Doubling property:}
$|\frac{1}{2}S^c_h[u]\cap U|\geq C |S^c_h[u]\cap U|$.
\item[$(ii)$] \emph{Volume estimate:}
$|S_h^c[u]| \approx |S_h^c[u]\cap U|\approx h^{\frac{n}{2}}.$
\end{itemize}
\end{corollary}

\begin{proof}
The doubling property follows from the proof of Lemma \ref{ud3}.     
Indeed, let $E, s$ be defined as above.  
Similarly to \eqref{ud11},   
\begin{align*}
\Big|\frac{1}{2}S^c_h[u]\cap U\Big| &\geq \Big|\frac{1}{2}E\cap  \{x_n\geq C'h^{1-2\epsilon}\}\Big |\\
	&\geq c\frac{\frac{1}{2}s-h^{1-2\epsilon}}{\frac{1}{2}s}|E|\\
				&\geq \frac{1}{2}c|S_h^c[u]|
\end{align*}
for a constant $c$ depending only on $n.$
%By the convexity of $S^c_h[u]$, for \eqref{ud22} we actually have
% $$ C_1s|S'_h| \leq |S^c_h[u]| \leq C_2s|S'_h|,$$
%where the constants $C_1, C_2$ depends only on $n$. 
Hence we obtain $|\frac{1}{2}S^c_h[u]\cap U|\geq C |S^c_h[u]\cap U|$.
 
Since the above doubling property is invariant under linear transforms of coordinates, 
similarly as in \cite[Corollary 2.1]{C96}, we can obtain
$$ \frac{\left|S_h^c[u]\right| \cdot \left|S_h^c[u]\cap U\right|}{h^n} \approx 1. $$
Therefore, by the uniform density of Lemma \ref{ud3}, we have the desired volume estimate.  
\end{proof}

In order to normalise the sub-level set $S^c_h[u]$, we need to strengthen estimate \eqref{gs2} to
\begin{equation}\label{cone444}
S^c_h[u] \subset \big\{x\in\R^n \,:\, |x'|\leq C_\epsilon h^{\frac{1}{2}-\epsilon} \big\}
\end{equation}
for any given $\epsilon>0$ small. 
The inclusion \eqref{cone444} can be proved as follows. 
Let $z=se_n$ be as in the proof of Lemma \ref{ud3}. 
From \eqref{gs3} and \eqref{gs2}, one sees that
\begin{equation}\label{cone222}
S^c_h[u]\cap \big\{x  \,:\, x_n\geq C'h^{1-2\epsilon}\big\}\subset \big\{x\in\R^n \,:\, |x'|\leq C_\epsilon h^{\frac{1}{2}-\epsilon} \big\}.
\end{equation}
Denote the intersection  $S^c_h[u]\cap\{x_n=C'h^{1-2\epsilon}\}=:\mathcal{B}$ 
with the same constant $C'$ in \eqref{cone222}.
Let $\mathcal{C}$ be the convex cone with vertex $z$ and base $\mathcal{B}$, namely
$$\mathcal{C} = \{z+t(x-z) \,:\, t\geq 0,\  x \in \mathcal{B}\}.$$ 
By convexity, we have (see Fig. \ref{figud})
\begin{equation}\label{cone111}
S^c_h[u]\cap \big\{0\leq x_n\leq C'h^{1-2\epsilon} \big\} \subset \mathcal{C}.
\end{equation}
From \eqref{gs1}, $s \gg h^{1-2\epsilon}$.  
Then by \eqref{cone222} and \eqref{cone111} we have 
\begin{equation}\label{cone333}
S^c_h[u]\cap\big\{x \,:\, 0\leq x_n\leq C'h^{1-2\epsilon} \big\}
  \subset \big\{x\in\R^n \,:\, |x'|\leq C_\epsilon h^{\frac{1}{2}-\epsilon} \big\}.
\end{equation}
From \eqref{cone222}, \eqref{cone333} and the property that $S^c_h[u]$ is balanced around $0$, 
we obtain \eqref{cone444}.

Next we normalise the sub-level set $S^c_h[u]$. 
Recall that from John's lemma, analogously to \eqref{ima2} there is an ellipsoid $E\subset\R^n$ such that 
$$S^c_h[u]\sim E =\Big\{x\in\R^n \,:\, \sum_{i=1}^{n-1}\frac{(x_i-k_ix_n)^2}{a_i^2}+\frac{x_n^2}{a_n^2}\leq 1\Big\} $$
in the sense that $E \subset S^c_h[u]\subset C_nE$.
For any $\epsilon>0$ small,  by \eqref{gs1} and \eqref{cone444} we have
\begin{align}\label{ab1}
 \hskip30pt  & a_i\leq C_\epsilon h^{\frac{1}{2}-\epsilon}\ \text{ for}\ i=1,\cdots, n-1,\\
  & a_n\geq C_\epsilon h^{\frac{1}{2}+\epsilon}. \nonumber
\end{align}
Moreover, since $z=se_n\in S^c_h[u]\subset C_nE$, from \eqref{gs1} and \eqref{ab1} we have 
\begin{equation}\label{k1} 
  \hskip20pt  |k_i|\leq C_n \frac{a_i}{s}\leq C_\epsilon h^{-2\epsilon} \ \ \text{for $i=1,\cdots,n-1.$}
\end{equation}
Let $A_h : x\mapsto \hat x$ be the affine transformation 
\begin{align}\label{nmAh}  
 \hskip30pt  & \hat x_i=\frac{x_i-k_ix_n}{a_i}\quad\text{for } i=1,\cdots,n-1;\\
  & \hat x_n=\frac{x_n}{a_n}, \nonumber
\end{align}
which normalises $S^c_h[u]$ such that $A_h(E)=B_1$. 

 Let $x=(x', \rho(x'))\in \partial U$ with $|x'| = h^{\frac{1}{2}-2\epsilon}$. By a rotation of coordinates, we may assume that $x'=(h^{\frac{1}{2}-2\epsilon},0,\cdots,0)$.
By \eqref{ziyou1}, \eqref{ab1} and \eqref{k1} we have
\begin{align}\label{dada}
	|\hat x_1| &= \Big| \frac{ h^{\frac{1}{2}-2\epsilon}- k_1 \rho(x')}{a_1} \Big| \geq C_\epsilon h^{-\epsilon} \rightarrow +\infty,	\\
	|\hat x_n| &= \Big| \frac{\rho(x')}{a_n} \Big| \leq \frac{Ch^{(\frac12-2\epsilon)(1+\alpha')}}{C_\epsilon h^{\frac{1}{2}+\epsilon}}\rightarrow 0, \nonumber
\end{align}
as $h\to0$ provided $\epsilon>0$ is small enough. 
Similarly, for any $x=(x', \rho(x'))\in \partial U$ with $|x'| \leq h^{\frac{1}{2}-2\epsilon},$
we have $|\hat x_n| \rightarrow 0$ as $h\to0$ provided $\epsilon>0$ is small enough. 
Hence, for any given constant $N>0,$ we have
\begin{equation}\label{pin121}
\partial A_h(U)  \cap B_N(0)\subset \{x: |x_n|\leq c_h\}
\ \text{ for 
some constant  }\ c_h\rightarrow 0\ \text{as}\ h\rightarrow 0.
\end{equation}

Now, denote $\hat S_h :=A_h (S_h^c[u])$ and $\hat U_h :=A_h(U).$
Then \eqref{pin121} implies the volume
\begin{equation}\label{closea}
\left| \left(\hat S_h\cap \{x_n\geq 0\} \right) \, \triangle\, \left(\hat S_h\cap \hat U_h\right) \right| \rightarrow 0
\end{equation}
uniformly as $h\rightarrow 0$, 
where $A\, \triangle\, B =  (A-B)\cup(B-A)$ for two sets $A, B$.
%{\Small\color{blue}(before ''More precisely" it is by \eqref{ab1}, after ''More precisely", it is by  \eqref{dada}?)}
%{\Small\color{red}(right, both of them are by \eqref{dada}}

\begin{lemma}\label{tc14}
For any given $\epsilon>0$,  there exists a constant $C_\epsilon>0$ such that 
\begin{equation}\label{t666}
	B_{C_\epsilon h^{\frac{1}{2}+\epsilon}}(0)\cap\{x_n=0\}\subset S_h^c[u].
\end{equation}
\end{lemma}

\begin{proof}
%Similar to the observation $b)$ in the proof of \cite[Lemma 4.1]{C96}, by the convexity of $u$ and the fact that $S^c_h[u]$ is balanced around $0$, we can obtain that for any large constant $M>1$, there exists a constant $C>0$ depending only on $n$ such that
%\begin{equation}\label{einc} 
%	\frac{1}{CM} S^c_{h}[u]\subset S^c_{\frac{h}{M}}[u].
%\end{equation}
%We will prove \eqref{t666} by using an iteration argument. 
%First, we \emph{claim} that for any large constant $M>1$, there exists $h_0>0$ such that $\forall\,h\in(0, h_0]$,
%\begin{equation}\label{iterationa}
%\frac{1}{C}M^{-\frac{1}{2}}S^c_h[u]\cap \{x_n=0\}\subset S^c_{\frac{h}{M}}[u],
%\end{equation}
%where $C>0$ is a constant depending only on $n$. 
%%% {\Small\color{blue} (I feel that \eqref{iterationa} is too good to be true)}

%{\color{purple}
We will prove \eqref{t666} by an iteration argument. 
First, we \emph{claim} that there exists a constant $C_*>0$ depending only on $n$, such that 
for any large constant $M>1$, there exists $h_0>0$ such that $\forall\,h\in(0, h_0]$,
\begin{equation}\label{iterationa}
\frac{1}{C_*}M^{-\frac{1}{2}}S^c_h[u]\cap \{x_n=0\}\subset S^c_{\frac{h}{M}}[u].
\end{equation}
%}

Assuming \eqref{iterationa} for the moment, we can obtain \eqref{t666} as follows.
For any given $\epsilon>0$ small, let $M=C_*^{1/\epsilon}.$
For any $h\in (0, h_0),$ there exists an integer $k$ and a height $\bar{h}\in [\frac{h_0}{M}, h_0]$ such that $h=\frac{\bar{h}}{M^k}.$
%{\Small{\color{blue} (why don't say   ''let $M=C^{1/\eps}$'' directly ?)}} {\Small\color{red}(yes, changed)}. 
By iterating \eqref{iterationa}, we obtain
 \begin{equation}\label{ith0}
 	\frac{1}{C_*^k} M^{-\frac{k}{2}}S^c_{\bar h}[u]\cap\{x_n=0\}\subset S^c_{\frac{\bar{h}}{M^k}}[u]\ \ \text{for all}\ k\geq  1.
\end{equation}
%$\forall\,h<h_0$, let $k\geq1$ such that $\frac{h_0}{M^{k}} \leq h < \frac{h_0}{M^{k-1}}$, 
Since $k=\log_M({\bar h}/h)$, a straightforward computation shows that
$\frac{1}{C_*^k}M^{-\frac{k}{2}} =({h}/{\bar h})^{\frac12+\epsilon}.$

%namely $\log_M(\frac{h_0}{h}) \leq k < 1+\log_M(\frac{h_0}{h})$.
%Thus,  
%\begin{equation*}
%\begin{split}
%	\frac{1}{C^k}M^{-\frac{k}{2}} &= (CM^{\frac{1}{2}})^{-k} \\
%	&> (CM^{\frac{1}{2}})^{-\log_M(\frac{Mh_0}{h})} 
%			= (CM^{\frac{1}{2}})^{\log_M(\frac{h}{Mh_0})}\\
%	& = \left(M^{\log_M(\frac{h}{Mh_0})}\right)^{\log_M(CM^{\frac12})} 
%			= \Big(\frac{h}{Mh_0}\Big)^{\frac12+\epsilon}.
%\end{split}
%\end{equation*}
Recall that $u\in C^1(\R^n)$ and globally Lipschitz (see \eqref{globlip} and Theorem \ref{CMCL} $(i)$). 
It implies that for the  $\bar h>0$, $B_{r_0}(0) \subset S^c_{\bar h}[u]$ 
%for $r_0 = \frac{h_0}{2M\|Du\|_\infty}\leq \frac{\bar h}{2\|Du\|_\infty}$.  
for $r_0 = \frac{\bar h}{2\|Du\|_\infty} \geq \frac{h_0}{2M\|Du\|_\infty}$. 
 Indeed, suppose $S^c_{\bar h}[u]=\{u<L\}$ for some affine function $L,$ then 
$(u-L)(0)=-\bar h$, $u-L=0$ on $\partial S^c_{\bar h}[u],$ and $|D(u-L)|\leq 2\|Du\|_\infty,$ hence for any 
$e\in \mathbb{S}^{n-1}$ and $0\leq t< r_0$,  we have $(u-L)(te) < -\bar h+2r_0\|Du\|_\infty = 0$, which implies $te\in S^c_{\bar h}[u]$.

Hence by \eqref{ith0}, we obtain  
 $$B_{C_\epsilon h^{\frac{1}{2}+\epsilon}}(0)\cap\{x_n=0\}\subset S_h^c[u],$$
 where $C_\epsilon=r_0(h_0)^{-\frac12-\epsilon}$.
Therefore \eqref{t666} is proved. 

It remains to prove the claim \eqref{iterationa}.
Let $A_h$ be the transformation in \eqref{nmAh}. Let
	$$ u_h(x)=\frac{1}{h}u(A_h^{-1}x). $$
%Then $u_h$ satisfies the Monge-Amp\`ere equation
%	$$ \det\, D^2 u_h=\hat f\chi_{\hat S_h\cap \hat U} \ \text{in}\ \hat S_h
%	\quad \text{ with } \hat f=c_h \frac{f}{g\circ Du},$$
%	where $c_h:=\frac{|\det\,A^{-1}_h|^2}{h^n}\frac{f(0)}{g(0)}$ is a constant.	
%Since $\hat S_h=A_h(S^c_h[u]) \sim B_1$, from part $(ii)$ of Corollary \ref{volume1}, 
%$|\det\,A^{-1}_h|\approx|S^c_h[u]|\approx h^{n/2}$. 
%Hence $c_h\approx 1$ and $|\hat f-c_h|_{L^\infty(\hat S_h)} \rightarrow 0$ as $h\rightarrow 0.$
% in $\hat S_h\cap \hat U_h$ for some constant $c_0>0$, uniformly as $h\rightarrow 0$. 
%Without loss of generality, we assume $c_0=1$.
%{\color{purple}
Then $u_h$ satisfies the Monge-Amp\`ere equation
	$$ \det\, D^2 u_h=\hat f\chi_{\hat S_h\cap \hat U} \ \text{ in }\ \hat S_h
	\quad \text{ with } \hat f=\frac{|\det\,A^{-1}_h|^2}{h^n} \frac{f}{g\circ Du}\circ A_h^{-1},$$
	where $\hat S_h=A_h(S^c_h[u]) \sim B_1$ and $\hat U=A_h(U)$. 
Let the constant $c_h:=\frac{|\det\,A^{-1}_h|^2}{h^n}\frac{f(0)}{g(0)}$.	
From $(ii)$ of Corollary \ref{volume1}, 
$|\det\,A^{-1}_h|\approx|S^c_h[u]|\approx h^{n/2}$. 
Hence $c_h\approx 1$ and $|\hat f-c_h|_{L^\infty(\hat S_h)} \rightarrow 0$ as $h\rightarrow 0.$
%}
Under the above normalisation, the claim \eqref{iterationa} is equivalent to 
 \begin{equation}\label{nmclaim}
   \frac{1}{C_*} M^{-\frac{1}{2}}S^c_1[u_h] \cap \{x_n=0\} \subset S^c_{\frac{1}{M}}[u_h]. 
 \end{equation}
 
We shall prove \eqref{nmclaim} by approximating $u_h$ by $w_h$,
where $w_h$ is the convex solution to 
\begin{align} \label{Drwh}
\det\, D^2w_h  &=  c_h\chi_{\hat S_h\cap \{x_n\geq 0\}}\quad  \text{ in }\, \hat S_h, \\
	w_h  &= u_h \quad  \text{ on }\, \partial \hat S_h. \nonumber
\end{align}

Since $\hat S_h$ is centered at $0$ and $|\hat S_h|\approx 1,$ we have that $|\hat S_h\cap \{x_n\geq 0\}|\approx 1.$
Let $L_h$ be the affine function such that $\hat S_h=\{u_h <L_h\}.$ Note that $u_h(0)-L_h(0)=-1.$
Let $w'_h:=w_h-L_h,$ then $w'_h$ satisfies the same equation as $w_h$ does, and $w'_h=0$ on 
$\partial \hat S_h.$ Then, by \cite[Lemma 2.4]{C96},  we have $|w'_h(0)|\approx |\inf w'_h|\approx 1$ in $\hat S_h,$
$$\text{dist}\left(\partial \{w'_h\leq 0\},  \partial \{w'_h\leq \frac14 w'_h(0)\}\right)\geq c_1$$ and
$$\text{dist}\left(\partial \{w'_h\leq \frac14 w'_h(0)\},  \partial \{w'_h\leq \frac12 w'_h(0)\}\right)\geq c_1$$ for some positive constants $C, c_1>0$ depending only on $n.$
By convexity of $w'_h$ and \cite[Corollary A.23]{Fig3}, it follows that $\|Du\|_{L^\infty( \{w'_h\leq \frac14 w'_h(0)\})}\leq C$ for some constant $C$ depending only on $n.$ Note that by convexity of $w'_h$ we also have $\frac{1}{2}\hat S_h\subset  \{w'_h\leq \frac12 w'_h(0)\}.$
%{\color{blue} Note that $\hat S_h$ is normalised, and $w_h=L_h$ on $\partial\hat S_h$ for some affine function $L_h$.
%One can obtain (see for example \cite[Lemma 2.4]{C96}) that $|w_h|\leq C,  |Dw_h| \leq C$ in $\frac12\hat S_h$ for some constant $C$ depending only on $n.$
%}
Note also that the right hand side of equation \eqref{Drwh} is independent of $x_i$ for $i=1,\cdots, n-1$. 
Hence by Pogorelov's interior second derivative estimate (see \cite[Corollary 1.1]{C96}), 
we have 
\beq\label{Pogo}
	|D_{ii}w_h|=|D_{ii}w'_h|\leq C_1\ \text{ in } \frac{1}{2}\hat S_h,\quad i=1,\cdots, n-1
\eeq
for a constant $C_1$ depending only on $n.$ 
Hence, for any large constant $M>1$, 
\begin{equation}\label{lev11}
B_{\frac{1}{C_2}M^{-\frac{1}{2}}}(0) \cap \{x_n=0\}\subset \Big\{x \,:\, w_h(x)\leq w_h(0)+Dw_h(0)\cdot x+\frac{1}{2M}\Big\},
\end{equation}
where $C_2>0$ is a constant depending only on $n.$
Thanks to \eqref{closea}, by the comparison principle (see \cite[Lemma 1.3]{C96}), we have 
\begin{equation}\label{uniformc}
\delta_h:=\|u_h-w_h\|_{L^\infty(\frac{1}{2}\hat S_h)}\rightarrow 0\quad\text{ as }\ h\to0.
\end{equation} 

Recall that $u_h(0)=0$, $u_h\geq 0.$
Similarly to \eqref{secrela}, we have $u\leq Ch$ in $S^c_h[u]$.
Thus $0\leq u_h\leq C$ in $\hat S_h$.
%{\color{blue}
%Hence by \eqref{uniformc} and the $C^{1,1}$ estimate for  $w_h$ in $x_i$ for $i=1, \cdots, n-1$,
%\begin{equation}\label{gradest}
%-\delta_h \leq u_h-\delta_h \leq w_h(\delta_h^{\frac{1}{2}}e)\leq w_h(0)+Dw_h(0)\cdot \delta_h^{\frac{1}{2}}e+C_1\delta_h
%\end{equation}
%for any unit vector in $e\in \{x_n=0\}.$
%we have $w_h(0)\rightarrow 0$ and $Dw_h(0)\cdot x\rightarrow 0$ uniformly for $x\in \frac{1}{4}\hat S_h\cap\{x_n=0\}$
%as $h\rightarrow 0.$
%From \eqref{gradest} we deduce that $-D\omega_h(0)\cdot e\leq (C_1+2)\delta_h^{\frac{1}{2}},$ for any unit vector in $e\in \{x_n=0\}.$ Replacing $e$ by $-e,$ we have that $|D\omega_h(0)\cdot e|\leq (C_1+2)\delta_h^{\frac{1}{2}},$ for any unit vector in $e\in \{x_n=0\}.$
%Hence we have $w_h(0)\rightarrow 0$ and $Dw_h(0)\cdot x\rightarrow 0$ uniformly for $x\in \frac{1}{4}\hat S_h\cap\{x_n=0\}$
%}
%{\color{purple}
Let $e\in \{x_n=0\}$ be a unit vector. 
By \eqref{Pogo} and \eqref{uniformc}, we have
\begin{equation}\label{gradest}
-\delta_h \leq u_h(\delta_h^{{1}/{2}}e)-\delta_h \leq w_h(\delta_h^{{1}/{2}}e)\leq w_h(0)+Dw_h(0)\cdot \delta_h^{{1}/{2}}e+C_1\delta_h,
\end{equation}  
and thus 
$$-Dw_h(0)\cdot e\leq (C_1+2)\delta_h^{{1}/{2}}.$$ 
Replacing $e$ by $-e,$ we then obtain 
$$|Dw_h(0)\cdot e|\leq (C_1+2)\delta_h^{{1}/{2}}\qquad \text{$\forall$ unit vector $e\in \{x_n=0\}$}. $$
Hence we have $w_h(0)\rightarrow 0$ and $Dw_h(0)\cdot x\rightarrow 0$ uniformly for $x\in \frac{1}{4}\hat S_h\cap\{x_n=0\}$ as $h\to0$.
%}
By \eqref{lev11} and \eqref{uniformc}, it then follows that  
for any $M>1$, there exists $h_0>0$ such that $\forall\,h\in(0,h_0]$,
\begin{equation}\label{nminc}
	B_{\frac{1}{C_2}M^{-\frac{1}{2}}}(0) \cap \{x_n=0\}\subset \Big\{x\,:\,u_h(x)\leq \frac{1}{M}\Big\}.
\end{equation}
 
We now show that \eqref{nmclaim} follows from \eqref{nminc}. 
Recall that $S_{1/M}^c[u_h]=\{u_h<L\}$ for some affine function $L$ with $L(0)=\frac{1}{M}$.
For a unit vector $e\in\{x_n=0\}$,
replacing $e$ by $-e$ if necessary,
we may assume that $L$ is non-decreasing in the direction $e$,
thus by \eqref{nminc}, $\frac{1}{C_2}M^{-\frac{1}{2}}e\in S_{1/M}^c[u_h]$.
%%% {\Small\color{blue} (why ``$L$ is non-decreasing in $e$" 
%%% implies ``$\frac{1}{C_2}M^{-\frac{1}{2}}e\in S_{1/M}^c[u_h]$''? by \eqref{nminc}?)}
As $S_{1/M}^c[u_h]$ is balanced around $0$,
it implies that  $-\frac{1}{C_3}M^{-\frac{1}{2}}e\in S_{1/M}^c[u_h]$ for a different  constant $C_3>C_2$ depending only on $n$.
%{\Small\color{blue} (you cannot say $C_3$ depends only on $n$
 %if it also depends on $C_2$, unless you have shown that $C_2$ depends only on $n$)} 
 %{\Small\color{red}($C_2$ indeed depends only on $n.$)}
Therefore,%Note that by the strict convexity of $u_h$, the equivalence relation \eqref{equi0} also holds for $u_h$, namely there is a universal constant $b\geq1$ such that $\left\{u_h < \frac{1}{bM}\right\} \subset S_{\frac{1}{M}}^c[u_h]$. 
%Hence, from \eqref{nminc} there exists a constant $C_3>C_2$ depending only on $n$ such that
$$ B_{\frac{1}{C_3}M^{-\frac{1}{2}}}(0) \cap \{x_n=0\}\subset  S_{\frac{1}{M}}^c[u_h]. $$

Then, recall that $S^c_1[u_h]=A_h(S^c_h[u]) \sim B_1$ is normalised. 
Therefore, we conclude that for any $M>1$, there exists $h_0>0$ such that $\forall\,h\in(0,h_0]$, 
 \begin{equation}\label{inistep}
	\frac{1}{C_4}M^{-\frac{1}{2}}S^c_1[u_h]\cap\{x_n=0\}\subset B_{\frac{1}{C_3}M^{-\frac{1}{2}}}\cap \{x_n=0\}\subset S^c_{\frac{1}{M}}[u_h],
 \end{equation}
where the constant $C_4$ depends only on $n$. 
Rescaling back, the claim \eqref{nmclaim} is proved. 
\end{proof}

We are now in a position to prove the $C^{1,1-\epsilon}$ regularity of $u$. 

\begin{corollary}\label{alpha}
For any $\epsilon>0$ small, there exists a constant $C_\epsilon$ such that 
 \begin{align}
 & u(x)\leq C_\epsilon |x|^{2-\epsilon} \quad\text{ for }\ x\in B_{r_0}(0),  \label{gs7}\\
 & u(x)\geq C_\epsilon |x|^{2+\epsilon} \quad\text{ for }\ x\in U\cap B_{r_0}(0), \label{u11d}
 \end{align}
where $r_0>0$ is a small constant.   Moreover, we have
\begin{equation}\label{gest1}
|Du(x)|\leq C_\epsilon |x|^{1-\epsilon} \quad\text{ for }\ x\in B_{\frac{r_0}{2}}(0).
\end{equation}
\end{corollary}

\begin{proof}
By \eqref{gs1}, Lemma \ref{tc14}, and the property that $S^c_h[u]$ is balanced around $0$,  we have 
$$B_{C_\epsilon h^{\frac{1}{2}+\epsilon}}(0)\subset S^c_h[u].$$
%From the equivalence relation \eqref{equi0} for $u$, and again the fact that $S^c_h[u]$ is balanced around $0$, 
%we have $S^c_h[u]\subset \{u<Ch\}$.
By Remark \ref{uniest11} it implies that $u<Ch$ in $B_{C_\epsilon h^{\frac{1}{2}+\epsilon}}$.   
Hence $u(x)\leq C_\epsilon |x|^{2-\epsilon}$ near the origin, and so  \eqref{gs7} is proved. 

Estimate \eqref{u11d} generalises Lemma \ref{tc12} in the sense that $u$ also has a lower bound along the $x_n$ direction. 
Let $q\in \partial S_h[u]$ be the point such that 
$q_n=\sup\{x_n \,:\, x\in S_h[u]\}$.   
By \eqref{gs1} and the first inclusion of \eqref{gs2}, we have $q_n\geq C_\epsilon h^{\frac{1}{2}+\epsilon}.$
By \eqref{ziyou1} and \eqref{gs2}, we also have 
$$\tilde{D}:=S_h[u]\cap \{x_n\geq C_\epsilon h^{1-2\epsilon}\}\subset U.$$
Note that $\frac{1}{C}\leq \det D^2u \leq C$ in $\tilde{D}$ and $0\leq u\leq h$ on $\tilde D$. 
The uniform estimate for the Monge-Amp\`ere equation  \cite{GT} implies that $|\tilde D|\leq Ch^{\frac{n}{2}}$.    
On the other hand, by \eqref{gs7},
 \begin{equation}\label{gs71}
B_{C_\epsilon h^{\frac{1}{2}+\epsilon}}(0)\cap\{x_n= C_\epsilon h^{1-2\epsilon}\} \subset \tilde{D}.
\end{equation}
Hence we obtain  
$|\tilde D|\geq C_\epsilon h^{(\frac{1}{2}+\epsilon)(n-1)}(q_n-C_\epsilon h^{1-2\epsilon}),$  which implies
\begin{equation}\label{gs8}
q_n\leq C_\epsilon h^{\frac{1}{2}-(n-1)\epsilon}. 
\end{equation}
By \eqref{gs8} and Lemma \ref{tc12}, we then obtain 
$S_{h}[u]\subset B_{C_\epsilon h^{\frac{1}{2}-\epsilon}}(0)\cap U$,  and so \eqref{u11d} follows.

The gradient estimate \eqref{gest1} follows from \eqref{gs7} and the convexity of $u$. 
\end{proof}

\begin{proof}[Proof of Theorem \ref{C11free}]
By Corollary \ref{alpha},  $Du$ is $C^{1-\epsilon}$ along the free boundary $\mathcal{F}$, for any $\epsilon>0$ small. 
By \eqref{normalformular}, it follows that $\mathcal{F}$ is $C^{1,1-\epsilon}$, for any $\epsilon>0$ small. 
\end{proof}

\section{$C^{2,\alpha}$ regularity}\label{S6}

In this section, we adopt the method recently developed in \cite{CLW1} 
to prove the $C^{2,\alpha}$ regularity of $u$ up to the free boundary
$\mathcal{F}.$
Let $u, v, \Omega,\Omega^*, U, V, \rho, \rho^*$ be as in \S\ref{S5}. 
Suppose the obliqueness \eqref{obq-n00} holds, and the densities $f\in C^\alpha(\overline{\Omega})$, $g\in C^\alpha(\overline{\Omega^*})$ for some $\alpha\in(0,1)$.
%%% And keep in mind that in this section, $\epsilon>0$ is as small as we want. 

First we construct an approximate solution of $u$ in $S_h[u]$ as follows. 
Denote 
$$D_h^+=S_h[u]\cap \left\{x_n\geq h^{1-3\epsilon} \right\}. $$  
Note that by Corollary \ref{alpha},  
\begin{equation}\label{diamlevel}
\text{diam}(S_h[u])\leq C_\epsilon h^{\frac{1}{2}-\epsilon}.
\end{equation}
By Theorem \ref{C11free}, we have
\begin{equation}\label{freeregu1}
|\rho(x')|\leq C_\epsilon |x'|^{2-\epsilon}\leq C_{\epsilon}h^{1-\frac{5}{2}\epsilon}\quad  \forall\, x\in \mathcal{F}\cap\partial S_h[u],
\end{equation}
 where $x'=(x_1,\cdots, x_{n-1}).$ 
 %{\color{blue}{\Small(point out clearly the notations $x', \tilde x$ etc at the beginning of \S2)} \color{red}{we did it at the beginning of \S4 for discussion in higher dimensions.}}
Hence for $h>0$ sufficiently small,  we have $D_h^+\Subset U$, see Fig. \ref{figref} below.

Let $D_h^-$ be the reflection of $D_h^+$ with respect to the hyperplane $\left\{x_n=h^{1-3\epsilon}\right\}$.  
Denote 
\begin{equation}\label{domDh}
	D_h:=D_h^+\cup D_h^-. 
\end{equation}
Since $Du(D_h^+)\subset \Omega^*\subset \{y_n\geq 0\},$ 
we have ${u_n}\geq 0$ in $D_h^+$, 
which implies that $D_h$ is a convex set. 
Moreover, by \eqref{freeregu1} and Corollary \ref{alpha}, it is straightforward to check that 
\begin{equation}\label{dh1}
B_{\frac{1}{C_\epsilon}h^{\frac{1}{2}+\epsilon}}(0)\subset D_h\subset B_{C_\epsilon h^{\frac{1}{2}-\epsilon}}(0).
\end{equation}

\renewcommand{\figurename}{Fig.}
\renewcommand{\captionlabeldelim}{}
\begin{figure}[h]
	\centering
	\includegraphics[width=0.8\textwidth]{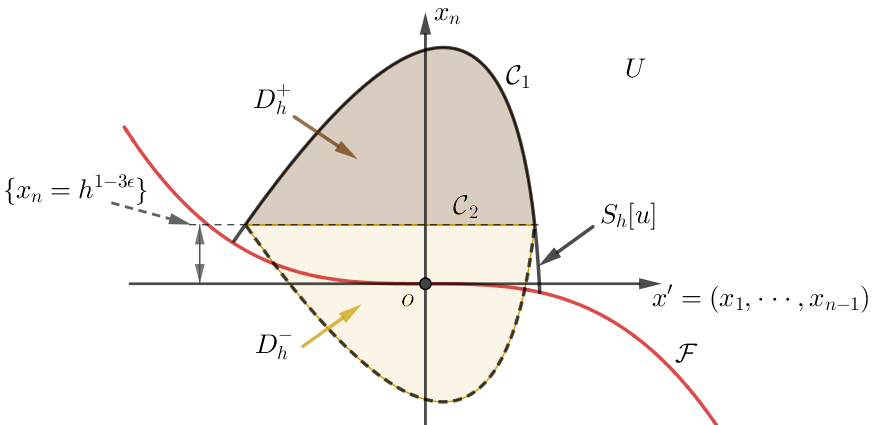}
	\caption{}
    \label{figref}
\end{figure}

Let $w$ be the solution to
\begin{equation}
\label{eq:MA v}
\begin{cases}
\det\,D^2w=1&\mbox{in $D_h$},\\
w=h&\mbox{on $\partial D_h$}.
 \end{cases}
 \end{equation}
Our proof relies on the following comparison estimate.
By the standard Alexandrov estimate for Monge-Amp\`ere equation \cite[Proposition 4.4]{Fig3} and \eqref{dh1}, we have that
$|w-h|\leq C|D_h|^{\frac{2}{n}}\leq C_\epsilon h^{1-2\epsilon}.$
Hence 
\begin{equation}\label{west199}
|w|\leq C_{\epsilon} h^{1-2\epsilon}\quad\mbox{ in } D_h.
\end{equation}

\begin{lemma}\label{diff1}
Assume that
$$\left|\frac{f}{g\circ Du}-1\right|\leq  Ch^\tau \ \ \mbox{in}\ \  D_h\cap U  $$
for a constant $\tau\in (0,1/2).$  
Then we have  the estimate 
\begin{equation}\label{coreLL}
	\|u-w\|_{L^\infty(D_h\cap U)}\leq C_1 h^{1+\tau'}
\end{equation} 
for some constant $\tau'\in (0, \tau)$ and  some constant $C_1$ independent of $h.$
\end{lemma}
\begin{remark}
Later, one can see that by Remark \ref{tau'} the exponent $\tau'$ can be improved to the same $\tau.$
\end{remark}

\begin{proof}
The boundary $\partial D_h^+ = \mathcal{C}_1\cup\mathcal{C}_2$ consists of two parts, 
where $\mathcal{C}_1\subset \left\{x_n> h^{1-3\epsilon}\right\}$ and $\mathcal{C}_2\subset \left\{x_n= h^{1-3\epsilon}\right\}$.  
We have $u=w$ on $\mathcal{C}_1$,
and by symmetry, $D_nw=0$ on $\mathcal{C}_2$.
We \emph{claim} that $0\leq D_nu \leq C_\epsilon h^{1-4\epsilon}$ on $\mathcal{C}_2$ 
for any given small $\epsilon>0$.  

To see this, for any $x=(x',h^{1-3\epsilon})\in \mathcal{C}_2 $,  let $z=(x', \rho(x')) \in \mathcal{F}$. 
By \eqref{diamlevel} and \eqref{freeregu1}, we have
$$|z-x|\leq h^{1-3\epsilon}+C_\epsilon h^{(\frac{1}{2}-\epsilon)(2-\epsilon)}\leq C_\epsilon h^{1-3\epsilon},$$
for $h$ small. 
By \eqref{gest1}, we have 
$$|Du(z)|\leq C_\epsilon |z|^{1-\epsilon}\leq C_\epsilon h^{(\frac{1}{2}-\epsilon)(1-\epsilon)}.$$
Since $Du(z) \in \partial \Om^*,$ by \eqref{ziyou2}  
we obtain 
$$ D_nu(z) \leq C_\epsilon h^{2(\frac{1}{2}-\epsilon)(1-\epsilon)}\leq  C_\epsilon h^{1-4\epsilon}. $$  
On the other hand, by Corollary \ref{alpha},
\begin{equation*}
|D_nu(x)-D_nu(z)|\leq C_\epsilon |x-z|^{1-\epsilon} \leq C_\epsilon h^{(1-3\epsilon)(1-\epsilon)}\leq C_\epsilon h^{1-4\epsilon}.
\end{equation*}
 Hence $0< D_nu(x) \leq C_\epsilon h^{1-4\epsilon},$ and the claim is proved.

Let 
\begin{align*}
 \widehat w  & = (1-h^\tau)^{1/n}w-(1-h^\tau)^{1/n}h+h, \\
\widetilde w & = (1+h^\tau)^{1/n}w-(1+h^\tau)^{1/n}h+h+2C_\epsilon(x_n-Ch^{1/2-\epsilon})h^{1-4\epsilon}. 
\end{align*}
By \eqref{eq:MA v} and choosing $C$ large,  we have
	\begin{align*}
		\det\,D^2\widehat w <\det\,D^2 u < \det\,D^2\widetilde w  & \quad\mbox{ in }  D_h^+,\\
			\widetilde w\leq u=\widehat w  = h &\quad\mbox{ on }\mathcal{C}_1, \\
		D_n\widehat w =0 < D_n u <D_n\widetilde w&\quad\mbox{ on }\mathcal{C}_2.
	\end{align*}
By the comparison principle, it follows that 
\begin{equation}\label{ineqmod1}
\widehat w \geq u\geq \widetilde w
\end{equation}
 in $D_h^+$.  
By the first inequality of \eqref{ineqmod1} and \eqref{west199} we have that   
\begin{align*}
u&\leq(1-h^\tau)^{1/n}w-(1-h^\tau)^{1/n}h+h\\
&\leq (w-h)(1-\frac{2}{n}h^{\tau})+h\\
&\leq w+\frac{2}{n}h^{1+\tau}+C_\epsilon h^{1+\tau-2\epsilon}\\
&\leq w+C_\epsilon  h^{1+\tau'}\qquad \text{ in } \ D_h^+,
\end{align*}
provided $h$ is sufficiently small and $\tau'<\tau-2\epsilon.$
By the second inequality of \eqref{ineqmod1} and \eqref{west199} we have that   
\begin{align*}
u&\geq (1+h^\tau)^{1/n}w-(1+h^\tau)^{1/n}h+h+2C_\epsilon(x_n-Ch^{1/2-\epsilon})h^{1-4\epsilon}\\
&\geq (w-h)(1+\frac{2}{n}h^{\tau})+h-2CC_\epsilon h^{3/2-5\epsilon} \\
&\geq w-\frac{2}{n}h^{1+\tau}-C_\epsilon h^{1+\tau-2\epsilon}-2CC_\epsilon h^{3/2-5\epsilon} \\
&\geq w-C_\epsilon  h^{1+\tau'}\qquad \text{ in } \ D_h^+,
\end{align*}
provided $h$ is sufficiently small and $\epsilon$ is chosen small enough.

Therefore, by choosing $\epsilon$ sufficiently small, we have
	\begin{equation}\label{coreL}
		|u-w| \leq C_\epsilon h^{1+\tau'} \qquad\mbox{in } D_h^+. 
	\end{equation}

\vspace{5pt}

Next, we estimate $|u-w|$ in $ D_h^-\cap U$.  
For $x=(x',x_n)\in D_h^-\cap U$, we have
 \begin{align}\label{xnest}
 h^{1-3\epsilon}\geq x_n
 &\geq \rho(x')   \geq -C_\epsilon |x'|^{2-\epsilon}   \nonumber  \\
  & \geq  -C_\epsilon h^{(\frac{1}{2}-\epsilon)(2-\epsilon)}\geq -C_\epsilon h^{1-3\epsilon}.
  \end{align}
    Note that the third inequality in \eqref{xnest} follows from Theorem \ref{C11free}.
Let $$z=(x', 2h^{1-3\epsilon}-x_n) \in D_h^+.$$
Then by \eqref{xnest} we have  $|x-z| \leq C_\epsilon h^{1-3\epsilon}$.  
From \eqref{coreL}, $|u(z)-w(z)| \leq Ch^{1+\tau'}$.
Since $w$ is symmetric with respect to $\left\{x_n=h^{1-3\epsilon}\right\}$, we have  $w(x)=w(z)$.
By \eqref{gest1}, we also have
	\begin{align*} 
		|u(x)-u(z)| & \leq \|Du\|_{L^\infty(D_h)} |x-z|\\
		 & \leq C_\epsilon h^{(\frac12-\epsilon)(1-\epsilon)+(1-3\epsilon)}\leq C_\epsilon h^{\frac{3}{2}-5\epsilon}
	 \end{align*}
for $\epsilon>0$ small.
Therefore, for the given constant $\tau<1/2$, when $\epsilon>0$ is sufficiently small,
	\begin{equation*}
		|u(x)-w(x)| \leq |u(x)-u(z)| + |u(z)-w(z)| \leq Ch^{1+\tau'}.
	\end{equation*}
Combining with \eqref{coreL} we thus obtain the desired $L^\infty$ estimate \eqref{coreLL}. 
\end{proof}

With Lemma \ref{diff1}, 
we can use the perturbation argument \cite{JW} to prove that $u\in C^{2,\alpha}({B_{\delta_0}\cap \overline{U}})$.
See also \cite[Theorems 5.1 and 5.3]{C96}, \cite[\S6]{CLW1}. 
Consequently by \eqref{normalformular}, we obtain $\mathcal{F}$ is $C^{2,\alpha}.$ 
For the reader's convenience, we outline the proof here.

Without loss of generality, assume
 $f(0)=g(0)=1$. 
 By \eqref{dh1}, the $C^\alpha$ regularity of $f,g$, and the $C^{1,\alpha'}$ regularity of $u,$ 
 we have  
	\begin{equation}\label{5002}
		\omega_f(h) := \sup_{x\in D_h}\Big|\frac{f(x)}{g(Du(x))}-1\Big| \leq Ch^\tau
	\end{equation}
for some $\tau \in (0,\frac{1}{2}).$
To proceed further, let us first quote a lemma from \cite{JW}.

\begin{lemma}\cite[Lemma 2.2]{JW}\label{pl3}
Let $u_i$, $i=1,2$, be two convex solutions of $\det\, D^2u=1$ in $B_1(0)$.
Suppose $\|u_i\|_{C^4}\leq C_0$.
Then if $|u_1-u_2|\leq\delta_1$ in $B_1(0)$ for some constant $\delta_1>0$, we have, for $1\leq k\leq 3$,
	\[ |D^k(u_1-u_2)| \leq C\delta_1 \quad\mbox{in } B_{1/2}(0). \]
\end{lemma}

%To {\color{blue}pursue (?)} the perturbation argument, we need to apply the following normalisation transform. 
Let $D_h, w$ be as in \eqref{domDh}, \eqref{eq:MA v}.
Given any $h>0,$ let $A$ be a unimodular affine transformation such that $\hat D_h:=h^{-\frac{1}{2}}A(D_h)$ has a good shape 
%{\Small\color{blue}(``good shape'' is vague, affine transform for ``good shape'' is not unique)}, 
in the sense that 
\begin{equation}\label{gsha}
	B_r(z) \subset \hat D_h \subset C_nB_r(z)
\end{equation}
for some $r>0$ and some point $z\in \hat D_h$,  where $C_n$ is a constant depending only on $n$. 
%{\Small\color{red} I deleted one sentence}
%From \eqref{dh1}, we have $h^\epsilon \lesssim r \lesssim h^{-\epsilon}$ for any $\epsilon>0$ small.  

We claim that $r\approx 1$.
Indeed,  let $\bar{w}(x):=\frac{1}{h}w(h^{\frac{1}{2}}A^{-1}x).$ Then, $\bar w$ is a convex solution of 
\begin{equation}\label{eqbarw}
\begin{cases}
\det\,D^2 \bar w=1&\mbox{in $\hat D_h$},\\
\bar w=1 &\mbox{on $\partial \hat D_h$}.
 \end{cases}
 \end{equation}
 By Lemma \ref{diff1} and since $0\leq u\leq h$ in $D_{h}^+,$
 we have
  \begin{align}\label{west1}
  & -Ch^{1+\tau'}\leq w \leq h \quad \text{in  }D_{h}^+, \nonumber \\
  & w(0)\leq u(0) + Ch^{1+\tau'} = Ch^{1+\tau'}. 
  \end{align}
By the symmetry of $w$, \eqref{west1} also holds in $D_h.$
Hence, 
\begin{align}\label{west2}
    -Ch^{\tau'} &\leq \bar{w}\leq 1\quad  \text{ in }  \hat D_h, \nonumber \\ 
    \bar w(0) & \leq Ch^{\tau'}.
\end{align}
From \eqref{eqbarw}, by Alexandrov's estimate \cite{C96, Fig3} we have
$$ |1 - \inf \bar w|^n \approx |\hat D_h|. $$
By \eqref{west2} it follows that $ |\hat D_h|\approx 1$ for $h$ small.  
Hence by \eqref{gsha}, we obtain $r\approx 1$.
By \eqref{west2}, we have that $|\bar w(0)-1|\approx 1.$ Hence by the Alexandrov maximum 
principle \cite[Theorem 2.8]{Fig3}, we have that 
$$\text{dist}(0, \partial\hat D_h)\geq c\frac{|\bar w(0)-1|^n}{\text{diam}(\hat D_h)^{n-1}\mu_{\bar{w}-1}(\hat D_h)}$$
for some constant $c$ depending only on $n$, where $\mu_{\bar w-1}$ is the Monge-Amp\`ere measure defined in \eqref{MAmeas}. 
Note that by \eqref{eqbarw} we have $\mu_{\bar{w}-1}(\hat D_h)\approx |\hat D_h|\approx 1.$
Hence $\text{dist}(0, \partial\hat D_h)\geq \frac{1}{C_n}$ for some constant $C_n$ depending only on $n.$
Therefore,
\begin{equation}\label{nnmm1}
 B_{1/C_n}(0)\subset h^{-\frac{1}{2}}A(D_h)\subset B_{C_n}(0) .
 \end{equation}
In particular, it implies that 
\beq\label{Dhbala}
\text{the set $D_h$ is balanced around $0$ for $h$ small.}
\eeq

%From \eqref{nnmm1}, we see that the transform $h^{-\frac12}AD_h$ has a good shape.
Next, we \emph{claim} that $h^{-\frac12}A(D_{h/4})$ also has a good shape. 
%simultaneously normalises $D_{h/4}$ as well. 
In fact, as in \eqref{domDh}, we can similarly define $D_{h/4}$ that is symmetric with respect to $\left\{x_n=(\frac{h}{4})^{1-3\epsilon}\right\}$. 
Note that $D_{h/4}$ may not be a subset of $D_h$, see Fig. \ref{fig4h}.

%{\Small\color{blue} (If you mean $D_{h/4}$ has a good shape, 
%then I think it is automatically true for a different shape constant.
%This is because that both $D_{h/4}$ and $D_h$ are convex and $|D_{h/4}|\ge c|D_h|$.) \color{red} we need the shape constant is independent of $h$, but here $D_{h/4}$ may not be a subset of $D_h$.}

\renewcommand{\figurename}{Fig.}
\renewcommand{\captionlabeldelim}{}
\begin{figure}[h]
	\centering
	\includegraphics[width=0.8\textwidth]{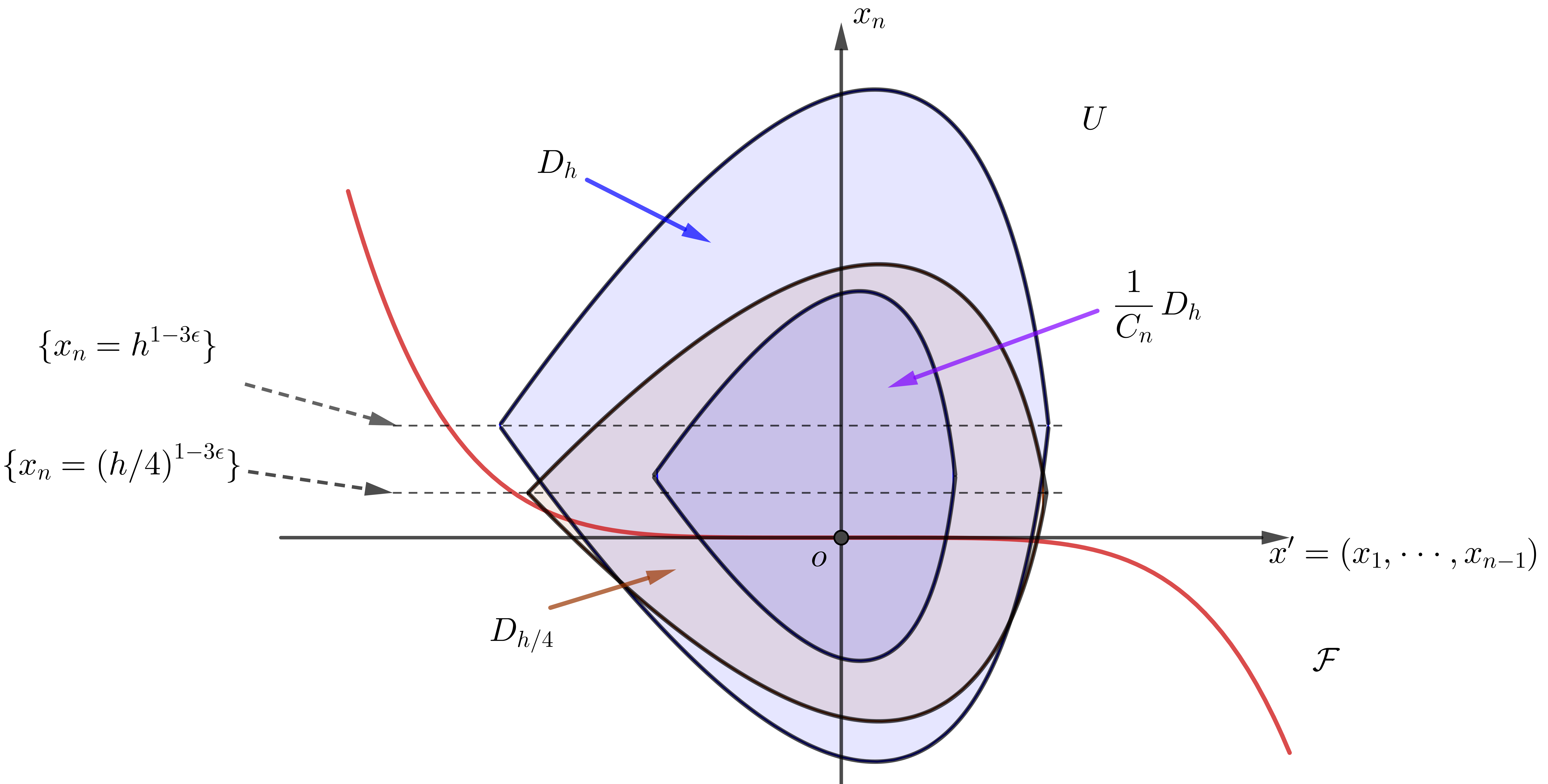}
	\caption{}
    \label{fig4h}
\end{figure}

By \eqref{dh1}, the width of $D_{h/4}$ in $e_n$ direction is greater than
 $C_\epsilon h^{\frac{1}{2}+\epsilon} \gg h^{1-3\epsilon}$ for $h$ small. Then, by convexity and symmetry,
 we have $|D_{h/4}\cap \{x_n\geq h^{1-3\epsilon}\}|\approx |D_{h/4}|\approx h^{n/2}.$ 
Hence  
\begin{equation}\label{dhprop1}
\left| h^{-\frac{1}{2}}A\left(D_{h/4}\cap \{x_n\geq h^{1-3\epsilon}\}\right) \right| \approx \left| h^{-\frac{1}{2}}A(D_{h/4})\right|\approx 1.
\end{equation}
Note that the set $h^{-\frac{1}{2}}A\left(D_{h/4}\cap \{x_n\geq h^{1-3\epsilon}\}\right)$ is uniformly bounded, since from \eqref{nnmm1}
\begin{equation}\label{dhprop2}
h^{-\frac{1}{2}}A(D_{h/4}\cap \{x_n\geq h^{1-3\epsilon}\})\subset h^{-\frac{1}{2}}A(D_h) \subset B_{C_n}(0).
\end{equation}
Hence, due to \eqref{dhprop1} the set $h^{-\frac{1}{2}}A\left(D_{h/4}\cap \{x_n\geq h^{1-3\epsilon}\}\right)$ also includes a ball inside, that is
\begin{equation}\label{dhprop3}
B_{\frac{1}{C_1}}(p)\subset h^{-\frac{1}{2}}A(D_{h/4}\cap \{x_n\geq h^{1-3\epsilon}\})\subset h^{-\frac{1}{2}}A(D_{h/4}) 
\end{equation}
for some point $p$, where the constant $C_1$ depends only on $n.$ 
By \eqref{dhprop1} and \eqref{dhprop3}, we have 
\begin{equation}\label{dhprop4}
h^{-\frac{1}{2}}A(D_{h/4})\subset B_{C_2}(0)
\end{equation}
 for some constant $C_2$ depending only on $n.$  
Finally, since $D_{h/4}$ is balanced around $0$ by \eqref{Dhbala}, from \eqref{dhprop1} and \eqref{dhprop4}
%by symmetry and the fact that  $C_\epsilon h^{\frac{1}{2}+\epsilon} \gg h^{1-3\epsilon}$ for $h$ small, 
we see that 
 $h^{-\frac12}A(D_{h/4})$ has a good shape,
namely 
\begin{equation}\label{nnmm2}
B_{1/C_3}(0)\subset h^{-\frac{1}{2}}A(D_{h/4}) \subset B_{C_3}(0)
\end{equation}
 for some constant $C_3$ depending only on $n.$

\begin{remark}\label{tau'}
Note that by \eqref{nnmm1} we have $|D_h|\approx h^{\frac{n}{2}},$ and then can improve the estimate
\eqref{west199} to $|w|\leq Ch.$ Hence, by examining the proof of Lemma \ref{diff1}, we can replace $\tau'$ by $\tau$
in the estimate \eqref{coreLL}.
\end{remark}

\begin{proof}[Proof of Theorem \ref{main1}]
%For any $\alpha'<\alpha$, let $h_0>0$ small enough such that \eqref{5002} is satisfied for $\tau=\frac{\alpha'}{2}$. 
%{\Small\color{blue} (there is no need to introduce $\alpha'$, you have already assumed that $\tau\in (0, \frac \alpha 2)$)}
Denote $h_k=4^{-k}h_0.$
Let $u_k$, $k=0,1,\cdots$, be the convex solution of
	\begin{align}
		\det\,D^2 u_k = 1 &\qquad\mbox{in }D_{h_k},\\
		u_k = h_k& \qquad\mbox{on }\partial D_{h_k}. \nonumber
	\end{align}
By rescaling back \eqref{nnmm1} and \eqref{nnmm2}, we see that
  $D_{h_k}$ is comparable to $D_{h_{k+1}},$ that is 
  $\frac{1}{C_n}D_{h_k}\subset D_{h_{k+1}}$ 
  for some constant $C_n$ depending only on $n$, 
  (see Fig. \ref{fig4h}). 
 % {\Small\color{blue} (Why do you use $M$ instead of $C_n$? you have used $M$ for a large constant)}
  
 Let $x=(x', x_n)\in \frac{1}{C_n}D_{h_k}$. 
 If $x_n\geq h_{k+1}^{1-3\epsilon},$ by \eqref{freeregu1} we have $x\in D_{h_k}\cap D_{h_{k+1}}\cap U.$ Then, by Lemma \ref{diff1}
  we obtain 
  \begin{equation}\label{gest1111}
  \begin{split}
  	|u_k(x)-u_{k+1}(x)| &\leq |u_k(x)-u(x)|+|u(x)-u_{k+1}(x)| \\
		&\leq C_1h_k^{1+\tau}+C_1h_{k+1}^{1+\tau}\leq Ch_k^{1+\tau}.
\end{split}
\end{equation}
  If $x=(x', x_n)\in \frac{1}{C_n}D_{h_k}$ with $x_n< h_{k+1}^{1-3\epsilon},$ by symmetry we have $\bar x := (x', 2h_k^{1-3\epsilon}-x_n)\in D^+_{h_k}$ and $\tilde x :=(x', 2h_{k+1}^{1-3\epsilon}-x_n)\in D^+_{h_{k+1}}$. 
 Since $u_k$, $u_{k+1}$ are symmetric with respect $\{x_n=h_k^{1-3\epsilon}\}$, $\{x_n=h_{k+1}^{1-3\epsilon}\}$, respectively, we have 
  \begin{align*}
  	|u_k(x)-u_{k+1}(x)| &= |u_k(\bar x) - u_{k+1}(\tilde x)| \\
		&\leq |u_k(\bar x) - u(\bar x)| + |u(\bar x) - u(\tilde x)| + |u(\tilde x) - u_{k+1}(\tilde x)|.
  \end{align*}
 From Lemma \ref{diff1}, $|u_k(\bar x) - u(\bar x)| \leq C_1h_k^{1+\tau}$ and $|u(\tilde x) - u_{k+1}(\tilde x)| \leq C_1h_{k+1}^{1+\tau} \leq C_1h_k^{1+\tau}$.
 To estimate the term $|u(\bar x) - u(\tilde x)|$, note that by \eqref{diamlevel} and Corollary \ref{alpha} we have 
 \begin{equation*}
   \|Du\|_{L^\infty(S_{h_k}[u])}\leq C_\epsilon h_k^{(\frac{1}{2}-\epsilon)(1-\epsilon)}\leq C_\epsilon h_k^{\frac{1}{2}-2\epsilon}.
 \end{equation*}
 Since $\bar x, \tilde x \in S_{h_k}[u]$ and $\tau<\frac12$, we thus obtain 
\begin{equation*}
\begin{split}
	|u(\bar x) - u(\tilde x)| &\leq \|Du\|_{L^\infty(S_{h_k}[u])}|\bar x-\tilde x|  \\
		& \leq  C_\epsilon h_k^{\frac{1}{2}-2\epsilon}|2h_k^{1-3\epsilon}-2h_{k+1}^{1-3\epsilon}| \\
		& \leq C_\epsilon h_k^{\frac{1}{2}-2\epsilon+1-3\epsilon} \leq Ch_k^{1+\tau}
\end{split}
\end{equation*}
   for some constant $C$ independent of $k$,  provided $\epsilon$ is small enough.
  Therefore, $|u_k(x)-u_{k+1}(x)|\leq Ch_k^{1+\tau}$.  Together with \eqref{gest1111}, we then conclude that 
\begin{equation}\label{kLinfy}
  	\|u_k-u_{k+1}\|_{L^\infty(\frac{1}{C_n}D_{h_k})}\leq C h_k^{1+\tau}
\end{equation} 
for some constant $C$ independent of $k.$

Let $A$ be the affine transformation such that $|\det\,A|=1$ and $\hat{D}_k:=h_k^{-\frac{1}{2}}A(D_{h_k})$ is normalised, namely $B_{\frac{1}{C}}(0)\subset \hat D_k \subset B_C(0)$ for some constant $C$ depending only on $n.$ 
Define   
   $$\bar{u}_k(x):=\frac{1}{h_k}u_k(h_k^{\frac{1}{2}}A^{-1}x),\quad\text{and}\quad \bar{u}_{k+1}(x):=\frac{1}{h_k}u_{k+1}(h_k^{\frac{1}{2}}A^{-1}x).$$ 
By \eqref{kLinfy}, we have
\begin{equation}\label{nmkLinfy}
	\|\bar{u}_k-\bar{u}_{k+1}\|_{L^\infty(\frac{1}{C_n}\hat{D}_{k})}\leq C h_k^{\tau}.
\end{equation}
Note that from \eqref{nnmm2},  $\hat{D}_{k+1}:=h_{k}^{-\frac{1}{2}}A(D_{h_{k+1}})$ is also normalised, thus both $\bar u_k$ and $\bar u_{k+1}$ have interior regularity \cite[Section 17.6]{GT}. Hence, by Lemma \ref{pl3}, we have 
$$ |D^2\bar u_k - D^2\bar u_{k+1}| \leq C h_k^{\tau} \quad\text{ in } \frac{1}{2C_n}\hat D_k. $$
Rescaling back and noticing that $\|A\|, \|A^{-1}\|\leq C_{\epsilon}h_k^{-\epsilon}$ due to \eqref{dh1}, we obtain  
    \begin{equation}\label{in2}
		|D^2u_k - D^2u_{k+1}| \leq C_\epsilon h_k^{\tau-2\epsilon} \quad\text{ in } \frac{1}{2C_n}D_{h_k},
	\end{equation}
and particularly $$\|D^2u_{k}(0)\|\leq \|D^2u_0(0)\|+\sum_{i=0}^{k-1}\|D^2u_{i+1}(0)-D^2u_{i}(0)\|\leq C+\sum_{i=0}^{k-1}C_\epsilon h_i^{\tau-2\epsilon}\leq C_2,$$
provided we choose $\epsilon$ sufficiently small, where $C_2$ is a universal constant independent of $k$. 
Since $\det\, D^2u_k=1,$ we also have $D^2u_k(0)\geq C_3I$ for some constant $C_3$ independent of $k.$

Now we \emph{claim} that 
\begin{equation}\label{goodshape111}
B_{\frac{1}{C_4}h_k^{\frac{1}{2}}}(0)\subset D_{h_k}\subset B_{C_4h_k^{\frac{1}{2}}}(0) \quad \forall\, k=1,2,\cdots
\end{equation}
  for some constant $C_4$ independent of $k$. 
Suppose the claim fails. Then the above affine transformation $A^{-1}$ must have a large norm. 
On the one hand, by Pogorelov estimate (see \cite[Section 17.6] {GT} or \cite[Theorem 3.10]{Fig3}), we have $\|D^2\bar{u}_k(0)\|\leq CI$ for some constant $C$ depending only on $n.$
 On the other hand $\|D^2\bar{u}_k(0)\|=\|(A^t)^{-1}D^2u_k(0)A^{-1}\|\geq C_3\|A^{-1}\|^2$ is very large, which is a contradiction.
 Hence \eqref{goodshape111} is proved.

 Since in \eqref{goodshape111} the constant $C_4$ is independent of $k,$ 
  we have that 
 \begin{equation}\label{goodshape222}
B_{\frac{1}{C_4}(4^{-1}h_k)^{\frac{1}{2}}}(0)\subset D_{4^{-1}h_k}\subset B_{C_4(4^{-1}h_k)^{\frac{1}{2}}}(0).
\end{equation}
 Denote by 
 $d_1:=\sqrt{(C_4^{-1}(4^{-1}h_k)^{\frac{1}{2}})^2-h_k^{2-6\epsilon}}.$ By a direct computation we have 
 that $C_4^{-1}(2^{-2}h_k)^{\frac{1}{2}}\leq d_1 \leq C_4^{-1}(2^{-1}h_k)^{\frac{1}{2}},$ provided $\epsilon$ is small and $k$ is large.
First, by the definition of $D_{h}$ we have that 
$$B_{d_1}(0)\cap U\cap \{x_n> h_k^{1-3\epsilon}\}\subset S_{4^{-1}h_k} \subset S_{h_k}.$$  
Then, for any $x=(x', x_n) \in B_{d_1}(0)\cap U\cap \{x_n\leq h_k^{1-3\epsilon}\},$ 
since $\mathcal{F}$ is $C^{1, 1-\epsilon},$ we have that
 $$h_k^{1-3\epsilon}\geq x_n\geq -C_{\epsilon}|x'|^{2-\epsilon}\geq 
 -C_{\epsilon}d_1^2\geq -C_\epsilon (C_4^{-2}2^{-1}h_k)^{\frac{2-\epsilon}{2}}.$$
 Hence $|x_n|\leq |h_k^{1-3\epsilon}|$ provided $k$ is large and $\epsilon$ is chosen small initially.
 Note that $(x', h_k^{1-3\epsilon})\in S_{4^{-1}h_k}.$
 Recall that by \eqref{gest1} we have that for any $x\in B_{d_1}(0)$ we have that $|Du(x)|\leq C_\epsilon |x|^{1-\epsilon}.$
 Now, 
 \begin{eqnarray*}
 u(x)&\leq& u(x', h_k^{1-3\epsilon})+C_\epsilon |d_1|^{1-\epsilon}(h_k^{1-3\epsilon}-x_n)\\
 &\leq& 4^{-1}h_k+2+2C_\epsilon (C_4^{-1}(2^{-1}h_k)^{\frac{1}{2}})^{1-\epsilon}h_k^{1-3\epsilon}\\
 &\leq& \frac{1}{2}h_k
 \end{eqnarray*}
 provided $\epsilon$ is small and $k$ is large. Hence $B_{d_1}(0)\cap U\subset S_{h_k}[u]$ for $k$ large.
 
 Let $z=(0, z_n)$ be the intersection of $\{te_n: t\geq 0\}$ and $\partial S_{h_k}[u],$ by \eqref{goodshape111} we have
 that $\frac{1}{C_4}h_k^{\frac{1}{2}}\leq z_n\leq C_4 h_k^{\frac{1}{2}}.$
 For any $x=(x', x_n) \in S_{h_k}[u]\cap \{x_n< h_k^{1-3\epsilon}\},$  by \eqref{diamlevel} we have that
 $|x'|\leq C_\epsilon h_k^{\frac{1}{2}-\epsilon}.$ Then, by the $C^{1,1-\epsilon}$ regularity of $\mathcal{F}$ we have that
 $x_n\geq -C_\epsilon h^{(\frac{1}{2}-\epsilon)(2-\epsilon)}.$ Hence 
 \begin{equation}\label{xn1}
 |x_n|\leq  C_\epsilon h_k^{1-3\epsilon}.
 \end{equation}
 Let $y=(y', h_k^{1-3\epsilon})$ be the intersection of the segment $xz$ and the hyperplane $\{x_n=h_k^{1-3\epsilon}\}.$ 
 By convexity of $u$ we have that $u(y)<h_k.$ Observe that $|y'|^2\leq C^2_\epsilon h_k^{1-2\epsilon}<h_k^{1-3\epsilon} $ provided $k$ is large. Hence $y\in D_{h_k},$ and by \eqref{goodshape111} we have that $|y'|\leq C_4h_k^{\frac{1}{2}}.$
 Now, 
 \begin{equation}\label{xxest}
 |x'|=\frac{|z_n-x_n|}{|z_n-y_n|}|y'|\leq C_4h_k^{\frac{1}{2}}
 \frac{C_4h_k^{\frac{1}{2}}+h_k^{1-3\epsilon}}{C^{-1}_4h_k^{\frac{1}{2}}-h_k^{1-3\epsilon}}\leq C_5h_k^{\frac{1}{2}},
 \end{equation}
 provided $k$ is large, for some constant $C_5$ depending only on $n.$
 By \eqref{xn1} and \eqref{xxest} we have that $S_{h_{k}}[u]\subset  B_{2C_4h_k^{{1}/{2}}}(0).$

From the above discussion, one has 
	$$B_{N^{-1}h_k^{{1}/{2}}}(0) \cap U \subset S_{h_{k}}[u]\subset  B_{Nh_k^{{1}/{2}}}(0)$$ 
for a constant $N$ independent of $k$, which implies that $u$ is $C^{1,1}$ at $0.$
Once having $u$ is $C^{1,1}$ at $0$, we deduce that $\epsilon=0$ in \eqref{dh1}, and since $f, g$ are $C^{\alpha}$ near $0,$ we can choose $\tau=\frac{\alpha}{2}$ in \eqref{5002}. 
Define 
	$$P_k(x):=u_k(0)+Du_k(0)\cdot x+\frac{1}{2}D^2u_k(0)x\cdot x.$$  
Let $r_k:=\frac14\min\{\frac{1}{C_4}(h_k)^{1/2}, \frac{1}{N}(h_k)^{1/2}\}$, where $C_4$ is in \eqref{goodshape111}, and $\hat B_k:=B_{r_k}(0)$. 
By applying Lemma \ref{pl3} to $\bar{u}_i, \bar{u}_{i+1}$ and then rescaling back, we have
\begin{align*}
\|D^3u_k\|_{L^\infty(\hat B_k)}&\leq \|D^3u_0\|_{L^\infty(\hat B_k)}+\sum_{i=0}^{k-1}\|D^3u_{i+1}-D^3u_i\|_{L^\infty(\hat B_k)}\\
&\leq C(1+\sum_{i=0}^{k-1}h_j^{\tau-\frac{1}{2}}) \leq Ch_k^{\tau-\frac{1}{2}}.
\end{align*}
Hence,
$$\|u_k-P_k\|_{L^\infty(\hat B_k)}\leq C\|D^3u_k\|_{L^\infty(\hat B_k)}h_k^{\frac{3}{2}}\leq Ch_k^{1+\tau}.$$
Therefore, by Lemma \ref{diff1} again, as $\tau=\frac{\alpha}{2}$, we have
\begin{equation}\label{approx1}
\begin{split}
|u(x)-P_k(x)| &\leq |u(x)-u_k(x)|+|u_k(x)-P_k(x)| \\
	&\leq C_1 h_k^{1+\tau} +  Ch_k^{1+\tau} \leq Cr_k^{2+\alpha}
\end{split}
\end{equation}
 for $x\in \hat B_k\cap U=B_{r_k}(0)\cap U.$ 
%Since $\mathcal{F}$ is $C^{1,\beta},$ we may find a ball $B_{\frac{1}{K}2^{-k}}(z_k)\subset B_{\frac{1}{4N}h_0^{\frac{1}{2}}2^{-k}}(0)\cap U$ for some point $z_k$ with $|z_k|\leq C2^{-k},$
 %where $K$ is a constant depending only on $N, h_0.$ 
Then, by \eqref{approx1} we have 
\begin{equation}\label{poly1}
\|P_k-P_{k-1}\|_{L^\infty\left(\hat B_k\cap U\right)}\leq 2Cr_k^{2+\alpha}.
\end{equation}
Denote $a_k=u_k(0)$, $b_k=Du_k(0)$, $c_k=\frac{1}{2}D^2u_k(0).$ Then
 $P_k(x)=a_k+b_k\cdot x +c_kx\cdot x.$ 
By \eqref{poly1}, we obtain
\begin{equation}\label{polydecay}
 \|c_k-c_{k-1}\|\leq Cr_k^{\alpha},\ \ \|b_k-b_{k-1}\|\leq Cr_k^{1+\alpha},\ \text{ and }\  |a_k-a_{k-1}|\leq  Cr_k^{2+\alpha}.
 \end{equation}
 Recall that $h_k=h_04^{-k}$, so $r_k\approx h_0^{1/2}2^{-k}$. 
 %{\color{purple} better to write $r_k \approx h_0^{1/2}2^{-k}$, because $\approx$ is up to a universal constant. } 
 Hence, $a_k, b_k, c_k$ converge to some $a_\infty, b_\infty, c_\infty$, respectively. 
 Let $P(x)=a_\infty+b_\infty\cdot x +c_\infty x\cdot x$. 
By \eqref{approx1}, \eqref{poly1} and \eqref{polydecay}, we obtain that
$|u(x)-P(x)|\leq C|x|^{2+\alpha}$, when $x\in B_{r_0}(0)\cap U$ for a small constant $r_0>0.$
%Then by standard argument we can pass from this pointwise estimate to a full $C^{2,\alpha}$ estimate, see for instance \cite[Remark 7.7]{Sa}.
 %$P_k=a_k+b_k\cdot (x-z_k) +c_k(x-z_k)\cdot (x-z_k),$ 
\end{proof}

\begin{remark}
By using the  strategy in this paper and the techniques developed in \cite[Section 4.3]{CLW1}, in dimension two, the assumptions on domains in Theorem \ref{main1} can be relaxed. In fact, we only need to assume $\Omega, \Omega^*$ to be $C^{1,\alpha}$ and convex.
\end{remark}

%\begin{remark}
%\emph{
%The approach also works for the more general case when two convex domains have overlap considered by Figalli \cite{AFi2, AFi} and Indrei \cite{I}. In particular, the main result holds for the part of free boundary away from the closure of the common region.}
%\end{remark}

\begin{remark}\label{rmkhr}
 % Let $u, v, \mathcal{F}, \Omega,\Omega^*, U, V, \rho, \rho^*$ be as in the beginning of \S\ref{S5}. 
 Assume further that $\Omega, \Omega^*, f, g$ are smooth, then the higher regularity of $\mathcal{F}$ follows from the classical elliptic theory \cite{GT}. 
 For the reader's convenience, we give an outline of the argument.
 Let $x_0\in\mathcal{F}$ and $y_0=Du(x_0)$.
 By a change of coordinates, we can assume $y_0=0$ and locally near the origin
  $$ \partial V=\{(y',y_n) : y_n=\rho^*(y')\}\ \text{ for }\ y'=(y_1,\cdots,y_{n-1})  $$
 with a smooth, convex function $\rho^*$ satisfying $\rho^*(0)=0$ and $D\rho^*(0)=0$.
 Once having $u$ is $C^{2,\alpha}$ smooth up to $\mathcal{F},$ one has $v\in C^{2,\alpha}(\overline{V}\cap B_{r_1}(0))$ for some small constant $r_1>0$.
 Let $\eta(x)$ be the defining function of $\mathcal{F}$ such that $\eta\in C^{2,\alpha}(B_{r_0}(x_0))$ for a small $r_0>0$ satisfying $\eta(x)=0$ and $|D\eta(x)| \ne 0$ for $x\in B_{r_0}\cap\mathcal{F}.$ 
 Then the function $v$ satisfies 
 \begin{equation}\label{tolinear}
 \begin{split}
 	\det\, D^2v(y) &= \frac{g(y)}{f(Dv(y))} \quad \text{for } y\in B_{r_1}(0)\cap V, \\
	\eta(Dv(y)) &=0 \qquad\ \hskip25pt  \text{ for } y\in B_{r_1}(0)\cap\partial V.
\end{split}
 \end{equation} 
Make the following change of coordinates $y\to\tilde y$ to flatten the boundary $B_{r_1}(0)\cap \partial V$, 
	\begin{equation*}
		\tilde{y}' = y'; \qquad	 \tilde{y}_n = y_n-\rho^*(y') 
	\end{equation*} 
and let $\hat v(\tilde{y})=v(y).$ 
By differentiating \eqref{tolinear} in the $\tilde{y}_k$-variable for $k=1, 2,\cdots, n-1$, we can see that function $\hat{w}=\partial_{\tilde{y}_k}\hat v$ satisfies a linear uniformly elliptic equation with an oblique boundary condition
 \begin{equation}\label{linearised}
 \begin{split}
 	\mathcal{L}[\hat w] = a^{ij}D_{ij}\hat w+b^iD_i\hat w - \tilde f = 0  \quad &\text{ in } B_{r_1}(0)\cap \{\tilde y_n>0\}, \\
	\beta \cdot D\hat w = \tilde g \qquad\ \hskip25pt  &\text{ on } B_{r_1}(0)\cap \{\tilde y_n=0\},
\end{split}
 \end{equation} 
where the coefficients $a^{ij}\in C^\alpha$, $b^i\in C^{1,\alpha}$, the functions $\tilde f\in C^\alpha$, $\tilde g\in C^{1,\alpha}$, and $\beta$ is a $C^{1,\alpha}$ vector field on $B_{r_1}(0)\cap \{\tilde y_n=0\}$ satisfying 
$$ \beta(\tilde y) \cdot e_n > 0 \qquad \text{ for all $\tilde{y}\in \{\tilde{y}_n=0\}$ near $0.$ }$$
Then, one can apply \cite[Section 6.7]{GT} to conclude that $\hat w=\partial_{\tilde{y}_k}\hat v\in C^{2,\alpha}(B_{\frac{1}{2}r_1}\cap \{\tilde{y}_n\geq 0\})$ for $k=1,\cdots, n-1.$
 By using the equation \eqref{linearised}, we also have $\partial_{\tilde{y}_n}\hat v\in C^{2,\alpha}(B_{\frac{1}{2}r_1}\cap  \{\tilde{y}_n\geq 0\}).$ 
 Hence, $\hat v\in C^{3,\alpha}(B_{\frac{1}{2}r_1}\cap \{\tilde{y}_n\geq 0\}),$ which implies 
 $$v\in C^{3,\alpha}(B_{\frac{1}{2}r_1}\cap \bar V).$$ Since $D^2u=(D^2v)^{-1},$ it implies that $u$ is $C^{3,\alpha}$ near $0.$ Hence $\mathcal{F}$ is $C^{3,\alpha}$ near $0,$ which implies that $\eta$ is $C^{3,\alpha}$ near $0.$ Finally, by differentiating the equation and boundary condition repeatedly, we can show that $\mathcal{F}$ is $C^{k,\alpha}$ for any $k\geq 1.$
  \end{remark}

\section{Blow-up analysis}\label{Sbu}

The purpose of this section and the next section is to prove the obliqueness property \eqref{obq-n00}. 
 In this section, we assume that $\overline\Om, \overline{\Om^*} \subset \R^n$ are disjoint, uniformly convex domains with $C^2$ boundaries.
The densities $f\in C(\bom), g\in C(\overline{\Omega^*})$, and there is a positive constant $\lambda$ such that 
$\lambda^{-1}<f, g<\lambda$ in $\Omega, \Omega^*$, respectively. 
%Then, we will  study the limit profile of a normalisation sequence assuming the obliqueness fails.

%If the obliqueness fails at some point $x_0\in\mathcal{F}$, it is crucial to understand the limit profile of a normalisation sequence around $x_0$.
%This is analogous to the technique of blow-up analysis introduced by Caffarelli \cite{C77} to the classical free boundary problem. 
Let $x_0\in\mathcal{F}$, $y_0=Du(x_0)\in \partial V\setminus \overline{\partial V\cap \Omega^*}$, and $\nu_{_U}(x_0), \nu_{_V}(y_0)$ be the 
unit inner normals of $U, V$, respectively.
By the convexity of $u$, it always holds that $\nu_{_U}(x_0) \cdot \nu_{_V} (y_0)\geq0.$ 
Suppose  \eqref{obq-n00} fails at $x_0$, then 
\beq\label{oblifail}
	\nu_{_U}(x_0) \cdot \nu_{_V} (y_0)=0.
\eeq
By a translation of coordinates, we may assume that $x_0$ is the origin. Then, by subtracting a  constant, we may assume $v(y_0)=0, v\geq 0.$ Hence $Dv(y_0)=0.$
 Denote  
\beq\label{Vhat} 
	\hat V = \left\{y-y_0 : y\in V \right\}. 
\eeq
	
The main result of this section is the following
\begin{proposition} \label{blowuppic}
Suppose \eqref{oblifail} occurs. 
Then, there exists a sequence of $h_k\rightarrow 0,$ and a sequence of affine transformations $A_{k}$ such that as $k\to\infty$,
$$v_k(y):=\frac{1}{h_k}v(A_k^{-1}(y+y_0))\quad\text{for } y\in \mathbb{R}^n$$ 
locally uniformly converges to a global convex function $v_0$.
Meanwhile, $A_k(\hat V)$ locally uniformly converges to a convex set $V_0$ as $k\to\infty$.  
There satisfies  
$$ \det\, D^2v_0=c_0\chi_{_{V_0}}\quad\text{ in } \R^n$$
for some constant $c_0>0$.

Let $U_0:=\text{interior of}\ Dv_0(\mathbb{R}^n)$.  Then, $U_0$ is a convex set.  
Under a proper coordinate system, we have the following limit profiles. 
\begin{itemize}
\item[($i$)] When $n=2,$  we have
	$$ V_0 =\big\{(y_1,y_2)\in\R^2 \,:\, y_1> \rho_0^*(y_2) \big\}, $$ 
	 where $\rho_0^*(t)=at^2$ for some constant $a>0$, and  
	$$ U_0 = \big\{(x_1,x_2)\in\R^2 \,:\, x_2> \rho_0(x_1) \big\}, $$
	 where $\rho_0$ is a convex function satisfying $0\leq \rho_0(t)\leq Ct^2$ for a constant $C>0$,
	and $\rho_0(t)=\frac{1}{2r}t^2$ for $t<0$,
	where $r>0$ is a constant. 
%for some positive constants $a, r$ depending on the domains $\Om, \Om^*$. 

\item[($ii$)] When $n\geq 3,$ we have 
	\begin{align*}
		V_0 &= \{y\in\R^n : y_1>\rho_0^*(y_n)\}, \\
		U_0 &= \{x\in\R^n : x_n>\rho_0(x_1)\},
	\end{align*}
where $\rho_0^*,\rho_0$ are two convex functions defined near $0$ satisfying $\rho_0^*(0)=\rho_0(0)=0$, $\rho_0^*\geq 0$, $\rho_0\geq 0.$ Moreover, $\rho_0^*$ is smooth and uniformly convex.
\end{itemize}

\end{proposition}

\begin{remark}
By the discussion below \eqref{maeq111} we can see that 
 $v_0$ is $C^1$ and strictly convex in $\overline{V}_0.$ 
\end{remark}

\subsection{Blow-up in dimension two}\label{S5.1}
Assume \eqref{oblifail} that the obliqueness fails at $0\in\mathcal{F}$ and $y_0=Du(0)\in \partial V$.
By a translation and a rotation of coordinates,  we may assume that the unit inner normals are 
$\nu_{_U}(0)=e_2$, $\nu_{_V}(y_0 )=e_1$ (see Fig. \ref{fig1}).
Then by \eqref{normalformular}, we have 
$$y_0 =re_2\quad \text{ for some } r\geq \text{dist}(\Omega, \Omega^*)>0.$$  
By $ii)$ of Theorem \ref{CMCL},
there is a function $\rho \in C^{1,\alpha'}$ satisfying $\rho(0)=\rho'(0)=0$ such that
%\\
%{\Small\color{blue}(I change $\beta$ to $\alpha$, because in Theorem 2.1 it is $\alpha$ 
%and in this section $\alpha$ is not used for any other occasion)}
	\begin{equation}\label{parU}
		\mathcal{F}=\{(x_1,x_2) \,:\, x_2=\rho(x_1)\}\quad\text{ near 0}.
	\end{equation}
Since $\partial V\cap\partial\Omega^*$ is $C^2$ smooth and uniformly convex near $y_0$,  we may assume 
	\begin{equation}\label{parV}
		\partial V=\{(y_1,y_2) \,:\, y_1=\rho^*(y_2-r)\}\quad\text{ near }y_0,
	\end{equation}
and $\rho^*(t)=at^2+o(t^2)$ for some constant $a>0$.

%
% \begin{notation}\label{n3.1}
%For simplicity, we will choose  $x_0$ as the origin.
%By the above choice of coordinates, we have $y_0=re_n$.
%%In the latter case, we have $y_0=0$ and $x_0=-re_n$.
%%We will assume that $x_0$ is the origin except otherwise indicated.
%\end{notation}

\begin{lemma} \label {up1}
$\rho(x_1) > 0$ for $x_1<0$ near the origin.
\end{lemma}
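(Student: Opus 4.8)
The plan is to argue by contradiction using the interior ball condition (Lemma~\ref{intball}) together with the normal formula \eqref{normalformular}. We have assumed the obliqueness fails, so after rotating we have $\nu_U(0)=e_2$, $\nu_V(y)=e_1$, and $y=re_2$ with $r>0$. The key observation is that $y=Du(0)$ and $|0-y|=r$, so by Lemma~\ref{intball} applied at the point $0\in\mathcal F$, we get $\Omega\cap B_r(y)\subset U$; in particular the ball $B_r(re_2)$ (intersected with $\Omega$) lies inside the active region $U$. Since $\nu_U(0)=e_2$ is the inner normal, $U$ lies (locally near $0$) on the side $\{x_2>\rho(x_1)\}$ of the free boundary graph $\mathcal F=\{x_2=\rho(x_1)\}$, and $\rho$ is $C^{1,\alpha}$ with $\rho(0)=0$, $\rho'(0)=0$ (the latter because $\nu_U(0)=e_2$).

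Next I would extract a contradiction from the geometry on the side $x_1<0$. Suppose for contradiction that $\rho(x_1)<0$ for some $x_1<0$ arbitrarily close to $0$. Then the point $(x_1,\rho(x_1))$ lies on $\mathcal F$, so its image $\eta':=Du((x_1,\rho(x_1)))$ lies on $\partial V\setminus\overline{\partial V\cap\Omega^*}$ by Theorem~\ref{CMCL}(3), and by \eqref{normalformular} the inner normal of $U$ at that point points from the point toward $\eta'$. Since $\partial V=\{x_1=\rho^*(x_2-r)\}$ is uniformly convex and sits in the half-space $\{x_1\ge 0\}$ near $y$ (here I would use that $\rho^*\geq 0$ with $\rho^*(t)=at^2+o(t^2)$), every point of $\partial V$ near $y$ has first coordinate $\geq 0$. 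Combined with Lemma~\ref{intball} in the reverse direction ($\Lm\cap B_{|x-y|}(x)\subset V$ for $x=Dv(\eta')$) and with the fact that $V$ lies on the $e_1$-side of $\partial V$ near $y$, one forces the active target near $y$ to lie in $\{x_1\ge 0\}$, and then the interior ball from $0$, namely $B_r(re_2)\cap\Omega\subset U$ mapped forward (or the corresponding dual ball), must be compatible with $\partial V$ being traced out on the correct side — but a free boundary point with $\rho(x_1)<0$ at $x_1<0$ produces a direction of the inner normal that is inconsistent with $\partial V$ being tangent to $\{x_2=r\}$ at $y$ from below. More concretely, I expect the cleanest route is: the graph $\mathcal F$ together with the interior ball $B_r(re_2)$ forces $\rho\geq 0$ on a full neighborhood of $0$, because if $\rho(x_1)<0$ then the segment from $(x_1,\rho(x_1))$ to its image would have to exit and re-enter configurations contradicting that $B_r(re_2)\cap\Omega\subset U$ while $(x_1,\rho(x_1))\in\partial U$.

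So the concrete steps are: (1) record that $\rho(0)=0$, $\rho'(0)=0$, $\rho\in C^{1,\alpha}$, and that $U$ is locally $\{x_2>\rho(x_1)\}$; (2) invoke Lemma~\ref{intball} at $0$ to get $B_r(re_2)\cap\Omega\subset U$, hence $B_r(re_2)$ does not meet $\partial U\cap\Omega$; (3) observe that the tangent line to $\partial B_r(re_2)$ at $0$ is $\{x_2=0\}$, so $B_r(re_2)\subset\{x_2>0\}$ locally, forcing the free boundary to satisfy $\rho(x_1)\geq 0$ for all small $x_1$ — in particular for $x_1<0$ — since $\mathcal F$ cannot enter the ball. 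The sign restriction to $x_1<0$ in the statement is presumably because on the other side $x_1>0$ there may be interference with $\Omega$ or a different piece of the argument handles it; I would state and prove the two-sided bound $\rho\ge 0$ near $0$ and then specialize.

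The main obstacle I anticipate is step (3): one must be careful that $B_r(re_2)\cap\Omega\subset U$ genuinely prevents $\mathcal F=\partial U\cap\Omega$ from dipping below $x_2=0$, which requires knowing that points just below $(x_1,0)$ for small $x_1<0$ actually lie in $\Omega$ (so that the inclusion $B_r(re_2)\cap\Omega\subset U$ has content there) rather than outside $\Omega$. This is where the hypothesis that $\Omega$ is a domain and that $0$ is an interior free boundary point (so a full neighborhood of $0$ lies in $\Omega$) is used; granting that, the ball $B_r(re_2)$ is tangent to $\{x_2=0\}$ from above at $0$, its intersection with the neighborhood lies in $\{x_2>0\}$ and in $U$, so no point of $\mathcal F$ can have $x_2<0$ near $0$, giving $\rho(x_1)\ge 0$ and in particular Lemma~\ref{up1}.
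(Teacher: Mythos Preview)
Your proposed argument has the inequality running the wrong way. The interior ball condition $B_r(re_2)\cap\Omega\subset U$ says the ball lies \emph{inside} $U$, so the free boundary $\mathcal F$ must lie \emph{outside} the ball. Since $B_r(re_2)$ is tangent to $\{x_2=0\}$ from above, ``$\mathcal F$ cannot enter the ball'' yields only the \emph{upper} bound $\rho(x_1)\le \frac{1}{2r}x_1^2+o(x_1^2)$ --- exactly what the paper records at the start of Lemma~\ref{keylemma} --- and says nothing about $\rho\ge 0$. A point $(x_1,\rho(x_1))$ with $\rho(x_1)<0$ sits below the $x_1$-axis, hence outside $B_r(re_2)$, so there is no contradiction with the inclusion you invoke. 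Your step~(3) therefore does not go through, and the two-sided bound you hoped for is simply false in general (the lemma is stated only for $x_1<0$ for a reason).

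The paper's actual proof is a one-line monotonicity argument that you brush past in your middle paragraph without making precise. If $\rho(x_1)<0$ for some $x_1<0$ close to $0$, then the point $-se_1$ (with $s=-x_1>0$) lies strictly above the graph of $\rho$, hence in the interior of $U$. By continuity of $Du$ its image $Du(-se_1)$ is near $y=re_2$ and lies in the interior of $V\subset\{x_1>0\}$, so $Du(-se_1)\cdot e_1>0$. Since $Du(0)=re_2$ has vanishing first component,
\[
(-se_1-0)\cdot\bigl(Du(-se_1)-Du(0)\bigr)=-s\,\bigl(Du(-se_1)\cdot e_1\bigr)<0,
\]
contradicting the monotonicity of the gradient of the convex function $u$. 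This also explains the restriction to $x_1<0$: the sign of the first factor is what makes the argument work.
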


\begin{proof}
Suppose to the contrary that there exists a point $-se_1\in U$ for some $s>0$. 
Then $Du(-se_1)\in V$. 
By the expression \eqref{parV} (the strict convexity of $\pom^*$), we have 
	$$ \left(Du(-se_1)-y_0\right) \cdot e_1 > 0. $$
On the other hand, since $u$ is convex and $y_0=Du(0)$, we have
	$$(-se_1-0)\cdot \left(Du(-se_1)-y_0\right)\geq 0,$$ 
which is a contradiction.  
\end{proof}

\renewcommand{\figurename}{Fig.}
\renewcommand{\captionlabeldelim}{}
\begin{figure}[h]
	\includegraphics[width=0.45\textwidth]{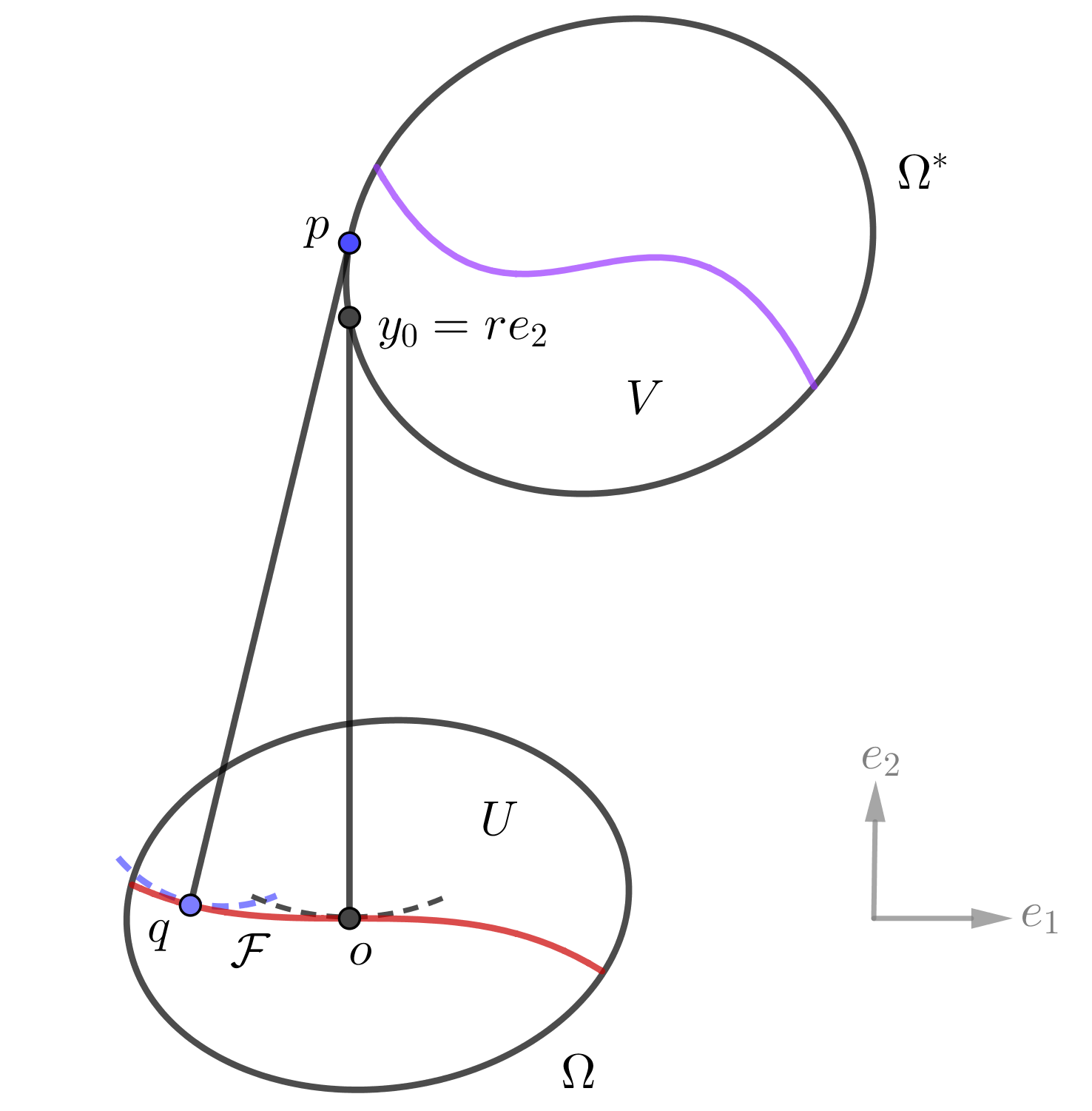}
	\caption{}
    \label {fig1} 
\end{figure}

The next lemma is a refinement of Lemma \ref{up1}. 
\begin{lemma}\label{keylemma}
$\rho(x_1)\leq Cx_1^2$ for $x_1$ close to $0$. Moreover,
$\rho(x_1)=\frac{1}{2r}x_1^2+o(x^2_1)$ for $x_1<0$ close to $0$.
\end{lemma}
\begin{proof}
First, by the interior ball property in Lemma \ref{intball}, $\mathcal{F}$ stays below the ball $B_r(y_0)$, 
which implies  that $\rho(x_1)\leq \frac{1}{2r}x_1^2+o(x^2_1)$ for $x_1$ close to $0.$    
Hence it suffices to prove $\rho(x_1)\geq \frac{1}{2r}x_1^2+o(x^2_1)$ for $x_1<0$ near the origin.

Consider a point $q=(q_1, \rho(q_1))\in\mathcal{F}$ for $q_1<0$ small.
Denote $p=Du(q)\in \partial \Om^*$.
By the interior ball property  again,  we have $B_{|p-q|}(p)\cap \Omega\subset U.$  
It implies $|p-q|\leq |p-0|,$ since otherwise $0$ would be an interior point of $U$ contradicting to the fact that $0\in \partial U.$
% {\color{blue} Indeed, if $|p-q|> |p-0|,$ then by the interior ball property we have $0\in B_{|p-q|}(p)\cap\Omega\subset U,$ namely, $0$ is an interior point of $U,$ which is a contradiction. }
Hence we have
\begin{equation}\label{ineq11}
\begin{split}
|p_2-\rho(q_1)|^2+(p_1+|q_1|)^2 &= |p-q|^2 \\
					           & \leq |p|^2=p_1^2+p_2^2.
\end{split}
\end{equation}
It follows that 
$$\rho(q_1)\geq \frac{1}{2p_2}q_1^2.$$ 
By the continuity of $Du$, we have $p_2\to r$ as $q_1\to 0$,
namely $p_2=r+o(1)$ as $q_1\to 0.$
Therefore,
 $$\rho(q_1)\geq \frac{1}{2r+o(1)}q_1^2\geq \frac{1}{2r}q_1^2+o(q_1^2).$$ 
\end{proof}

By our discussion in Section \ref{S2}, $v\in C^1(\mathbb{R}^2)$ and $Dv=\text{Id}$ in $\Omega\setminus U$.
Hence, as $0\in\mathcal{F}\subset\partial U$,
$$Dv(0)=0=Dv(y_0). $$  
By the convexity of $v$, we infer that
$$ Dv(te_2)=0\quad \forall\,t\in[0,r].$$ 
By subtracting a constant, we may assume that $v(y_0)=0$ and $v\geq 0$ on $\mathbb{R}^2$.
Then $v(te_2)=0$ for all $t\in[0,r]$ as well.  

Consider the point $p=(p_1, p_2)\in \partial \{v<h\}\cap \partial\Omega^*$ with $p_2<r$ (see Fig. \ref{f3.2}).
Since $0\in\{v<h\}$,  by the convexity of $\{v<h\}$ and $\Omega^*$,  the sub-level set
\begin{equation}\label{pin11}
S_h[v]=\{v<h\}\cap\Omega^*\ \text{is pinched between the rays}\ \overrightarrow{0y_0}\ \text{and}\ \overrightarrow{0p}.
\end{equation}
Denote $s:=r-p_2$.  From \eqref{parV}, $p_1=\rho^*(-s)=as^2+o(s^2)$.

\renewcommand{\figurename}{Fig.}
\renewcommand{\captionlabeldelim}{}
\begin{figure}[h]
	\centering
	\includegraphics[width=0.5\textwidth]{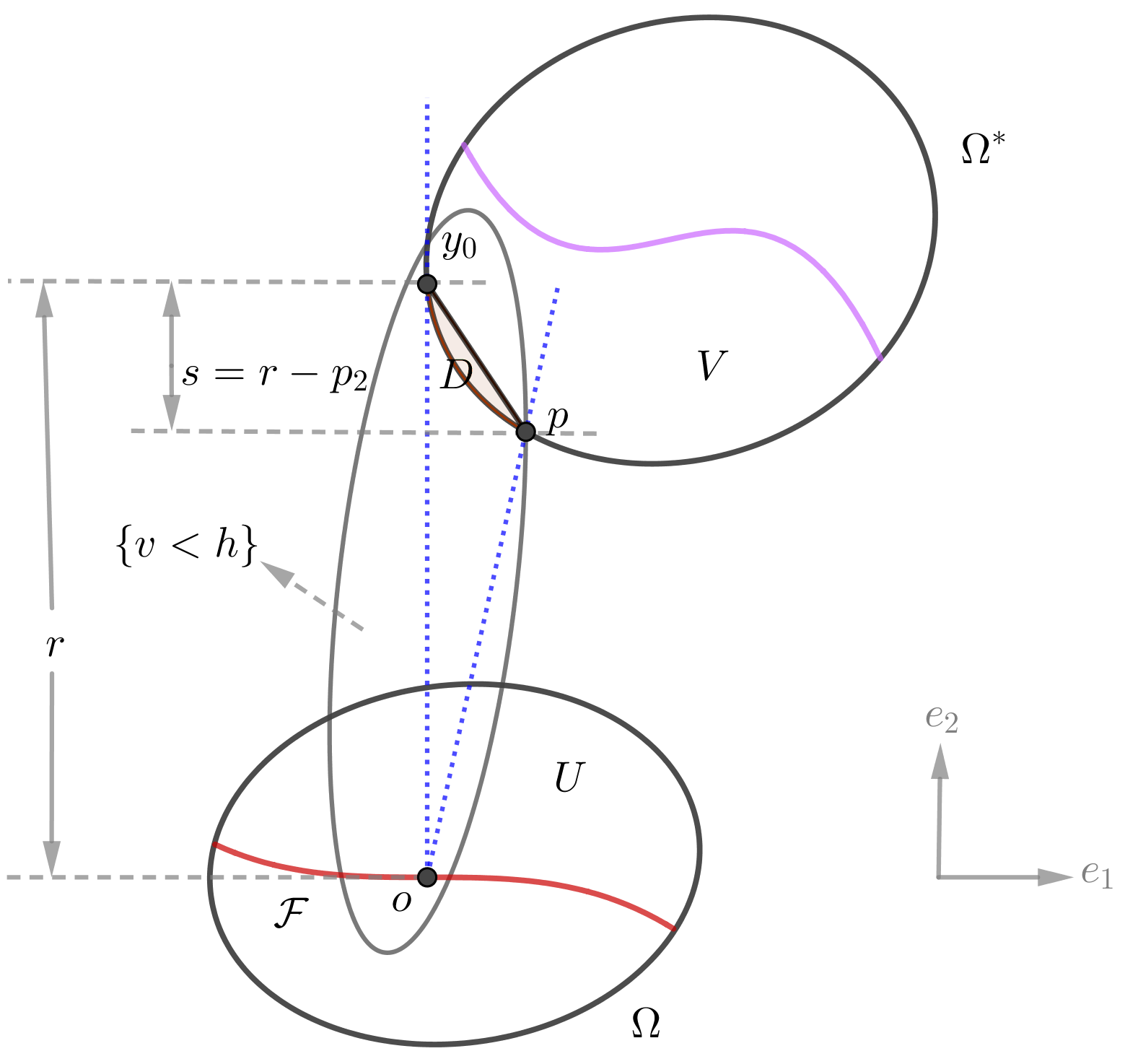}
	\caption{}
    \label{f3.2}
\end{figure}

\begin{lemma}\label{dlemma}
There exist positive constants $C_1, C_2$ depending on $\lambda$ and the domains $\Omega, \Omega^*$, but independent of $h$, such that 
	\begin{equation}\label{sh13}
		C_1h^{1/3}\leq s\leq C_2h^{1/3}.
	\end{equation}
\end{lemma}

\begin{proof}
Let $D\subset S_h[v]$ be the region enclosed by $\partial\Omega^*$ and the segment $\overline{y_0 p}$, (see Fig. \ref{f3.2}).
We have
\begin{equation}\label{volD}
\begin{split}
|D|&=\frac{1}{2}s\rho^*(-s)-\int_{0}^s\rho^*(-t)\,dt\\
&=\frac{1}{2}as^3+o(s^3)-\int_0^s(at^2+o(t^2))\,dt\\
&=\frac{1}{6}as^3+o(s^3).
\end{split}
\end{equation} 
By the volume estimate \eqref{nm1}, we also have $|D|\leq |S_h[v]|\approx h$.   
Hence, $s\leq C_2h^{1/3}$.   

For any given $y \in S_h[v]$,  by \eqref{pin11} we have $\frac{y_1}{y_2}\leq \frac{p_1}{p_2}.$
By the strict convexity of $v$,  we have $\text{diam}(S_h[v])\leq \frac{r}{3}$ for $h$ sufficiently small. 
Hence, $p_2\geq \frac{2}{3}r$ and $y_2\leq \frac{4}{3}r$,
thus we obtain
$$y_1\leq \frac{p_1}{p_2}y_2\leq Cs^2. $$    
From \eqref{parV} we also have 
$$y_1 \geq \rho^*(y_2-r) \geq \frac12a(y_2-r)^2.  $$  
Combining the above two inequalities, we obtain $|y_2-r|\leq Cs\ \forall\, y \in S_h[v]$. Hence
 $$S_h[v] \text{ is contained in the box } [0, Cs^2]\times [r-Cs, r+Cs].$$   
It follows that $h\approx |S_h[v]|\leq 2C^2s^3$,  which then implies $s\geq C_1h^{1/3}$.   
\end{proof}

%A crucial ingredient of the proof of Proposition \ref{oblique} is to estimate the shape of the centred sub-level set $S^c_h[v](y_0)$.
Thanks to Lemma \ref{dlemma}, we are able to give a precise description of the shape of the centred sub-level set $S^c_h[v](y_0)$ in the subsequent lemma. 

\begin{remark}\label{y0}
In order to simplify notations, we can translate $y_0$ to the origin by letting $\hat u(x)=u(x)-y_0\cdot x$ and $\hat v(y)=v(y+y_0).$
By subtracting a constant we may also assume
 $\hat u(0)=\hat v(0)=0$, and $D\hat u(0)=D\hat v(0)=0.$
Under the translation, $V$ becomes $\hat V$ defined by \eqref{Vhat} and 
\begin{align*}
 D\hat u(\mathbb{R}^2)=\hat \Omega^* = \left\{ y-y_0 : y \in \Omega^*\right\}.
\end{align*} 
By the properties $(i)$--$(iii)$ in \S\ref{s21}, it is also straightforward to check that 
$\hat{u}^*=\hat{v}$ in $\hat{V}$, and $\hat{u}^*$ is strictly convex in $\hat \Omega^*.$  
For simplicity, we  denote $\hat u, \hat v, \hat\Omega^*, \hat V$  by $u, v, \Omega^*, V$. 
We remark that the separation of $\Om^*$ and $\Om$ will not be used in the rest of this subsection.
\end{remark}
%{\Small\color{blue} (we can write $\hat u, \hat v$ as $u, v$ 
%only if in the rest of the section, we will not use the condition that $\Om^*$ and $\Om$ are separate anymore, 
%otherwise it will be a confusion for readers.
%If we will not use the separation of $\Om^*$ and $\Om$ in the rest of the section, please point out it clearly.)}

By Remark \ref{y0},
we may assume $y_0=0, v(0)=0$ and $Dv(0)=0.$
The following lemma characterises the shape of the centred sub-level set $S^c_h[v](0)$.

\begin{lemma}\label{normalv1}
There exists a positive constant $C$ independent of $h$ such that 
\begin{equation}\label{nm11}
B_{\frac{1}{C}}(0)\subset A_h\big( S^c_h[v] \big) \subset B_C (0),
\end{equation}
where  $A_h$ is a linear transform given by
\beq
A_h=\left(\begin{matrix}
 h^{-\frac{2}{3}} , &0  \\
0, & h^{-\frac{1}{3}}
 \end{matrix}\right).
 \eeq
\end{lemma}

\begin{proof}
Let $D$ be as in the proof of Lemma \ref{dlemma}. 
From \eqref{volD} and \eqref{sh13}, we have the volume estimate $|D|\approx h$.
Hence
	$$|A_h(D)|=\frac{1}{h}|D|\geq \frac{1}{C}$$ 
for some  $C>0$ independent of $h$.   
Since $D$ is contained in the rectangle $[0, Ch^{2/3}]\times [-Ch^{1/3}, 0]$,  
we see that $A_h(D)$ is bounded, and $A_h(D)\subset B_C(0)$ for a constant $C$ independent of $h$.   
Hence there exist a ball contained in $A_h(D)$, namely
	$$ B_{\frac{1}{C}}(q)\subset A_h(D) $$
for a point $q\in A_h(D)$ and a different constant $C$.
From the equivalence relation \eqref{equi0}, we thus conclude
	\begin{equation}\label{aee1}
		B_{\frac{1}{C}}(q)\subset A_h(D)\subset A_h(S_h[v]) \subset A_h(S_{bh}^c[v]),
	\end{equation}
where $b\geq1$ is a  constant independent of $h$. 

By the volume estimate \eqref{nm1}, we have
$|S_h[v]|\approx |S_h^c[v]|\approx h$,  hence 
\begin{equation}\label{aee2}
|A_h(S_{bh}^c[v])|\approx 1.
\end{equation}
By \eqref{aee1}, \eqref{aee2} and noting that $S_{bh}^c[v]$ is a convex set centred at $0$,   we obtain \eqref{nm11}.
\end{proof}

The proof of Lemma \ref{normalv1} also applies to the sub-level set $S_h[v]$.
%and \eqref{nm11} holds for $S_h[v]$ as well.
In fact, from \eqref{aee1}, $A_h(S_h[v])$ contains a ball $B_{1/C_1}(q)$. 
By John's lemma,
there exists an ellipsoid $E$ centered at $q',$ the center of mass of $A_h(S_h[v]),$ such that $E\subset A_h(S_h[v])\subset C(n)E.$ Let $r_1\leq r_2\leq\cdots r_n$ be the principal radii of $E.$
Similarly to \eqref{aee2},  we see that by \eqref{nm1}, the volume  $|A_h(S_h[v])|\approx 1,$ hence $r_1r_2\cdots r_n \approx 1.$
 Since $A_h(S_h[v])$ contains a ball $B_{1/C_1}(q),$ $r_1\geq \frac{1}{C_1C(n)}.$
Hence $r_n\lesssim \frac{1}{r_1^{n-1}}\leq C.$
Therefore, $A_h(S_h[v])$ also has a good shape, namely, $B_{1/C}(q')\subset A_h(S_h[v])\subset B_{C}(q'),$
for some positive constant $C$ independent of $h.$

\begin{proof}[Proof of Proposition \ref{blowuppic} when $n=2$]

Denote $V_h=A_h(V)$, $U_h=\frac{1}{h}A^{-1}_h(U)$.   
Locally near the origin, the boundary $\p U_h$ can be represented by
\begin{equation}\label{limitshape}
 \partial U_h =
     \Big\{(x_1,x_2)\in\R^2: \ x_2=\rho_h(x_1):=h^{-\frac23}\rho(h^{\frac13}x_1)\Big\}.
     \end{equation}
By Lemma \ref{keylemma}, we have $\rho_h(t)\leq Ct^2,$ and $\rho_h(t)=\frac{1}{2r}t^2+o(1)t^2$ for
$t<0.$

Similarly, by \eqref{parV}, the boundary $\p V_h$ can locally be represented by
\beq\label{pVh}
\partial V_h=\left\{(y_1,y_2)\in\R^2: \ y_1=\rho^*_h(y_2)=h^{-\frac23}\rho^*(h^{\frac13}y_2)=ay_2^2+o(1)y_2^2\right\},
\eeq
where $o(1)\rightarrow 0$ as $h\rightarrow 0$.

Denote 
\begin{equation}\label{buvh}
	v_h(y)=\frac{1}{h}v(A_h^{-1}y).
\end{equation}
We \emph{claim} that for $h> 0$ small, $v_h$ is locally uniformly bounded in  $\R^2$.
Note that by \eqref{localise} we have $S^c_h[v]\cap V=S^c_h[v]\cap \Omega^*$ is convex and $S^c_h[v]\cap \Omega=\emptyset$ for $h$ small.
Hence, by \eqref{Asolv} 
	$$C^{-1}\chi_{_{S^c_h[v]\cap V}}  \leq \det\, D^2 v \leq C \chi_{_{S^c_h[v]\cap V}}\quad\mbox{ in } S^c_h[v].$$
Therefore, the Monge-Amp\`ere measure $\mu_v$ is doubling for $S^c_h[v],$ when $h$ is small. Note also that, by the same reason the doubling property holds
for all centred sub-level sets $S^c_h[v](y)$ for $y\in \overline{V}$ close to the origin and $h$ small.
Then, for any $k>0$ large, 
by the geometric decay of sections (see \cite[Lemma 2.2]{C96} or \cite[Lemma 7.6]{CM}), there exists a constant $M_k$ such that 
	$$kS^c_h[v]\subset S^c_{M_kh}[v] \quad \text{ for $h>0$ small}. $$
On the other hand, by  \eqref{nm11} we have 
	$$B_{\frac{k}{C}}(0) \subset A_h\left(kS^c_h[v]\right) \subset A_h\left(S^c_{M_kh}[v]\right).$$ 
From \eqref{secrela}, we have $v\leq C_1M_kh$ in $S^c_{M_kh}[v]$ for a constant $C_1$ independent of $h$. 
Hence under the normalisation \eqref{buvh}, we obtain
	\begin{equation}\label{aac1}
	 0\leq v_h\leq C_1M_k \quad\text{in }B_{\frac{k}{C}}(0),
	 \end{equation}
where the constants $C, C_1$ are independent of $k, h$. 
As $k>0$ can be arbitrarily large, the claim is proved.
By \eqref{aac1}, \cite[Corollary A.23]{Fig3} and the convexity of $v_h,$ we have that
%{\color{blue} Since $v_h(0)=0$, by \eqref{aac1} and the convexity of $v_h$, we have }
\begin{equation}\label{aac2}
\|Dv_h\|_{L^\infty(B_{{k}/{2C}}(0))}\leq \frac{\|v_h\|_{L^\infty(B_{{k}/{C}}(0))}}{{k}/{2C}}\leq \frac{2CC_1M_k}{k}.
\end{equation}
Now, passing to a subsequence, by the above claim we may assume that 
$v_h$ converges to $v_0$ locally uniformly.
By the expression \eqref{pVh}, we may also assume that
$V_h, \rho^*_h$ converge to $V_0, \rho^*_0$ locally uniformly, 
and 
$$V_0:=\big\{y\in\R^2 \,:\, y_1> \rho_0^*(y_2)=ay_2^2 \big\} . $$
Moreover, 
\begin{equation}\label{maeq1}
\det\, D^2v_0=c_0\chi_{_{V_0}} \quad \text{ in }\ \mathbb{R}^2
\end{equation}
for some constant $c_0>0$. 

%{\color{red} Indeed, suppose $v_{h_i}$ converges to $v_0$ locally uniformly as $i \rightarrow \infty,$ where 
%$h_i\rightarrow 0,$ as $i\rightarrow \infty.$ 
%Fix any $k$ large, we have that $v_{h_i}$ satisfies $\det D^2v_{h_i}(x)=f_i(x) \chi_{_{B_{\frac{k}{C}}(0)\cap V_{h_i}}}$ in $B_{\frac{k}{C}}(0),$
%where $f_i(x):=\frac{g(A_{h_i}^{-1}x)}{f(Dv(A_{h_i}^{-1}x))}.$ Since $V_{h_i}$ converges to $V_0$ locally uniformly and $f, g$ are continuous near $0,$
% we have that $f_i(x) \chi_{_{B_{\frac{k}{C}}(0)\cap V_{h_i}}}$ converges to $\frac{f(0)}{g(0)}\chi_{_{B_{\frac{k}{C}}(0)\cap V_{0}}}.$
% By the weak continuity of Monge-Ampere measures, we have that $\det D^2v_{0}(x)=c_0 \chi_{_{B_{\frac{k}{C}}(0)\cap V_{0}}}%$ in $B_{\frac{k}{C}}(0),$ where $c_0=\frac{f(0)}{g(0)}.$
 % Let $k\rightarrow \infty,$ we get the desired equation \eqref{maeq1}.

%}

%{\color{blue}
Denote by $U_0$ the interior of $\partial v_0(\mathbb{R}^2)$.
Since $v_0$ is a convex function defined on entire $\mathbb{R}^2,$ 
$U_0$ is convex.  
First we need a property  that
 for any $\tau>0,$ there exists a constant $M_\tau>0$ independent of $h$ such that 
 \begin{equation}\label{ginl131}
	B_\tau(0)\cap U_h\subset Dv_h(B_{M_\tau}(0)\cap V_h) \quad\text{ for $h>0$ small}.
\end{equation}
This property will be proved for general dimension later, see Lemma \ref{goodinclu1} and its proof.

For any $k$ large, by \eqref{aac2} we also have  
 \begin{equation}\label{ginl132}
Dv_h(B_{k}(0)) \subset B_{_{C_k}}\cap \overline{U_h}  \quad\text{ for $h>0$ small}
\end{equation}
for some constant $C_k$ independent of $h.$
By \eqref{ginl131} and \eqref{limitshape},  we have that
$$\{x:  x_2>Cx^2_1\}\cap B_{\tau}(0)\subset Dv_h(B_{M_{\tau}}(0)).$$
%where $\rho_h(x_1)=\frac{1}{2r}t^2+o(1)t^2$ for $t<0.$
Let $h\rightarrow \infty,$ and then take $\tau\rightarrow \infty$ (also take $M_{\tau}\rightarrow \infty$) we have that
$$\{x\in \mathbb{R}^2:  x_2>Cx_1^2\}\subset \partial v_0(\mathbb{R}^2),$$ which implies
\begin{equation}\label{U0c}
\{x\in \mathbb{R}^2: x_2>Cx_1^2\}\subset U_0.
\end{equation}
By \eqref{U0c} and the convexity of $U_0,$ we have that $U_0\subset \{x: x_2\geq 0\}.$ Hence $U_0$ is the epigraph of some convex function $\rho_0$ with $\rho(0)=\rho'(0)=0,$ namely, $U_0=\{x: x_2>\rho_0(x)\}.$  
Replacing $\{x\in \mathbb{R}^2:  x_2>Cx_1^2\}$ by $\{x: x_2>\rho_h(x_1), x_1<0\}$ in  the above argument, we have that
$\{x\in \mathbb{R}^2: x_2>\frac{1}{2r}x_1^2, x_1<0\}\subset U_0,$ which implies
$\rho_0(x_1)\leq \frac{1}{2r}x_1^2$ for $x_1<0.$ Note that  $\rho_h(t)=\frac{1}{2r}t^2+o(1)t^2$ for $t<0.$

Then, for any $k$ large, since the convex functions $v_h$ locally uniformly converges to $v_0$ in $\mathbb{R}^n,$ and both
$v_h, v_0$ are $C^1$ in the interior of $B_k(0)\cap V_0$ (provided $h$ is sufficiently small), by convexity we have that $Dv_h$ converges to $Dv_0$ locally uniformly in $B_k\cap V_0.$ Hence, $Dv_h(x)\rightarrow Dv_0(x)$ for any $x\in V_0.$
Then, by \eqref{ginl132} and \eqref{limitshape}, taking limit $h\rightarrow 0,$  we have that
 $Dv_0(V_0)\cap\{x: x_1\leq 0\}\subset \{x: x_2\geq \frac{1}{2r}x_1^2\}.$ By \eqref{maeq1} we see that 
 $|\partial V_0(\mathbb{R}^2\backslash V_0)|=0,$ which implies that $|U_0\backslash Dv_0(V_0)|=0.$ Note that the boundary of convex set has Lebesgue measure $0$. From the above discussion we deduce that $U_0\cap\{x: x_1\leq 0\}\subset \{x: x_2\geq \frac{1}{2r}x_1^2\},$ which implies that $\rho_0(x_1)\geq \frac{1}{2r}x_1^2$ for $x_1<0$.

Therefore, we have
\begin{equation}\label{limU0} 
U_0 = \Big\{(x_1,x_2)\in\R^2 \,:\, x_2> \rho_0(x_1) \Big\}
\end{equation}
 where $ \rho_0$ is a convex function satisfying $0\leq \rho_0(t)\leq Ct^2$ and   
	$\rho_0(t)=\frac{1}{2r}t^2$ for $t<0.$
Hence $U_0\subset\{x_2\geq0\}$ and $\{x_2=0\}$ is a support plane of $U_0$ at $0$.%}
\end{proof}

\subsection{Blow-up in higher dimensions}

%In this section, we prove the obliqueness in higher dimensions  
%by further exploiting the interior ball condition of Lemma \ref{intball} and adapting some techniques from \cite{CLW1}. 
In this subsection we assume $n\geq 3$ and the obliqueness fails at $x_0\in\mathcal{F}.$
Similarly as in \S\ref{S5.1}, 
denote $y_0=Du(x_0)$, which is a point on $ \partial V\setminus \overline{\partial V\cap \Omega^*}\subset \partial\Omega^*$.
Denote still by $\nu_{_U}(x_0), \nu_{_V}(y_0)$ the unit inner normals of $U, V$ at $x_0, y_0$, respectively. 
By a change of coordinates, we assume that $x_0=0$, 
$\nu_{_U} (0)=e_n$, and $\nu_{_V} (y_0)=e_1$.   
By subtracting a constant we can also assume that $v\geq 0$ and $v(y_0)=0$.
From \eqref{normalformular}, $y_0=re_n$ for some $r>0$.  

Unless otherwise specified, we use the notations $x=(x_1,\cdots,x_n)\in\R^n$; $x'=(x_1,\cdots,x_{n-1})$, $\hat x=(x_2,\cdots,x_n) \in\R^{n-1}$; and $\tilde x=(x_2,\cdots,x_{n-1})\in\R^{n-2}$.
 
Similarly to \eqref{parU}, the free boundary $\mathcal F$ can locally be expressed by
	$$\mathcal{F}=\{x  \,:\, x_n=\rho(x_1,\tilde x)\}\ \ \ \text{near $0$} $$
for some function $\rho$. 
By Lemma \ref{intball}, $\mathcal{F}$ lies below the ball $B_r(y_0)$ near $0$.
Hence by $ii)$ of Theorem \ref{CMCL}, the function $\rho$ satisfies 
	\begin{equation}\label{domain1}
		-C(x_1^2+|\tilde{x}|^2)^\frac{1+\alpha'}{2}\leq\rho(x_1,\tilde{x})\leq C(x_1^2+|\tilde{x}|^2)
	\end{equation}
for some $\alpha'\in(0,1)$. 
Analogously to \eqref{parV}, we also have
$$\partial V= \left\{ y  \,:\, y_1=\rho^*(\tilde{y}, y_n-r) \right\} \quad  \text{ near } \ y_0  $$ 
for some $C^2$ smooth and uniformly convex function $\rho^*$, which can be expressed as
	\begin{equation}\label{defppp}
		\rho^*(\tilde{y},t)=P(\tilde{y},t)+o(|\tilde{y}|^2+t^2),
	\end{equation}
where $P$ is a quadratic polynomial satisfying 
 $$C^{-1}(|\tilde{y}|^2+t^2)\leq P(\tilde{y},t)\leq C(|\tilde{y}|^2+t^2) $$
for some positive constant $C$.

For brevity,  we write $S_h[v](y_0), S_h^c[v](y_0)$ simply as
$S_h[v], S^c_h[v]$ when no confusion arises. 
By $(ii)$  of Corollary \ref{co21},  for any given $\eps>0$, there exists $C_\eps$ such that
\begin{equation}\label{h1}
\begin{split}
& B_{C_\epsilon h^{\frac{1}{2}+\epsilon}}(y_0)\cap\{y_1=0\}  \subset S_h^c[v] .   \\
%& B_{C_\epsilon h^{\frac{1}{2}+\epsilon}}(y_0)\cap\partial V  \subset S_h[v], 
\end{split}
\end{equation}
A key estimate is the following

\begin{lemma}\label{hkey1}
For any given $\epsilon>0$ small, 
there exists a constant $C_\epsilon$ such that for all unit vector $e\in\text{span}\{e_2, e_3, \cdots, e_{n-1}\}$,
\beq\label{xe}
|(y-y_0) \cdot e|\leq C_\epsilon h^{\frac{1}{2}-\epsilon} \quad\forall\,y\in S_h^c[v].
\eeq 
\end{lemma}

Let $p=(p_1, 0,\cdots, 0, p_n)$ be a point on $\partial\{v<h\}\cap \partial \Omega^*$ with $p_n<r$ 
(see Fig. \ref{fighd}).
Denote $s=r-p_n$.    
Since $\partial \Omega^*$ is $C^2$ smooth and uniformly convex, 
we have  $p_1=as^2+o(s^2)$ for a positive constant $a.$
Lemma \ref{hkey1} is built upon the following estimate.

\begin{lemma}\label{hkey2}
For any $\epsilon>0$ small, there exist constants $C, C_\epsilon$ such that
\beq\label{hsh}
C h^{\frac{1}{3}}\leq s\leq C_ \epsilon h^{\frac{1}{3}-\epsilon}
\eeq
when $h>0$ is small, where $C>0$ is a constant independent of $\eps$.
\end{lemma} 

\begin{proof} 
Let $D \subset \text{span}\{e_1,e_n\}$ be 
a two-dimensional region enclosed by $\partial \Omega^*$ and  the segment $\overline{y_0p}$  (see Fig. \ref{fighd}).  
By \eqref{volD}, we have $|D|_{\mathcal{H}_2}=\frac{1}{6}as^3+o(s^3)$, where $|\cdot|_{\mathcal{H}_d}$ denotes the $d$-dimensional Hausdorff measure. 
From \eqref{equi0}, we have 
	\begin{equation}\label{doDD}
		D\subset S_h[v]\cap V\subset S_{bh}^c[v].
	\end{equation}
	By \eqref{h1} we have 
	\begin{equation}\label{abc1}
	C_\epsilon h^{\frac{1}{2}+\epsilon}e_i\subset S^c_{bh}\quad \text{for}\ i=2, \cdots, n-1.
	\end{equation}
Combining these estimates and using  \eqref {nm1} and the convexity of $S^c_{bh}$, we obtain
	\begin{equation*} 
	 h^{\frac{n}{2}}\approx |S_{bh}^c|_{\mathcal{H}_n}\geq C_\epsilon h^{(\frac{1}{2}+\epsilon)(n-2)} |D|_{\mathcal{H}_2} \geq C_\epsilon s^3h^{(\frac{1}{2}+\epsilon)(n-2)}. 
\end{equation*}
Hence the second inequality of \eqref{hsh} is obtained.

\renewcommand{\figurename}{Fig.}
\renewcommand{\captionlabeldelim}{}
\begin{figure}[h]
	\centering
	\includegraphics[width=0.6\textwidth]{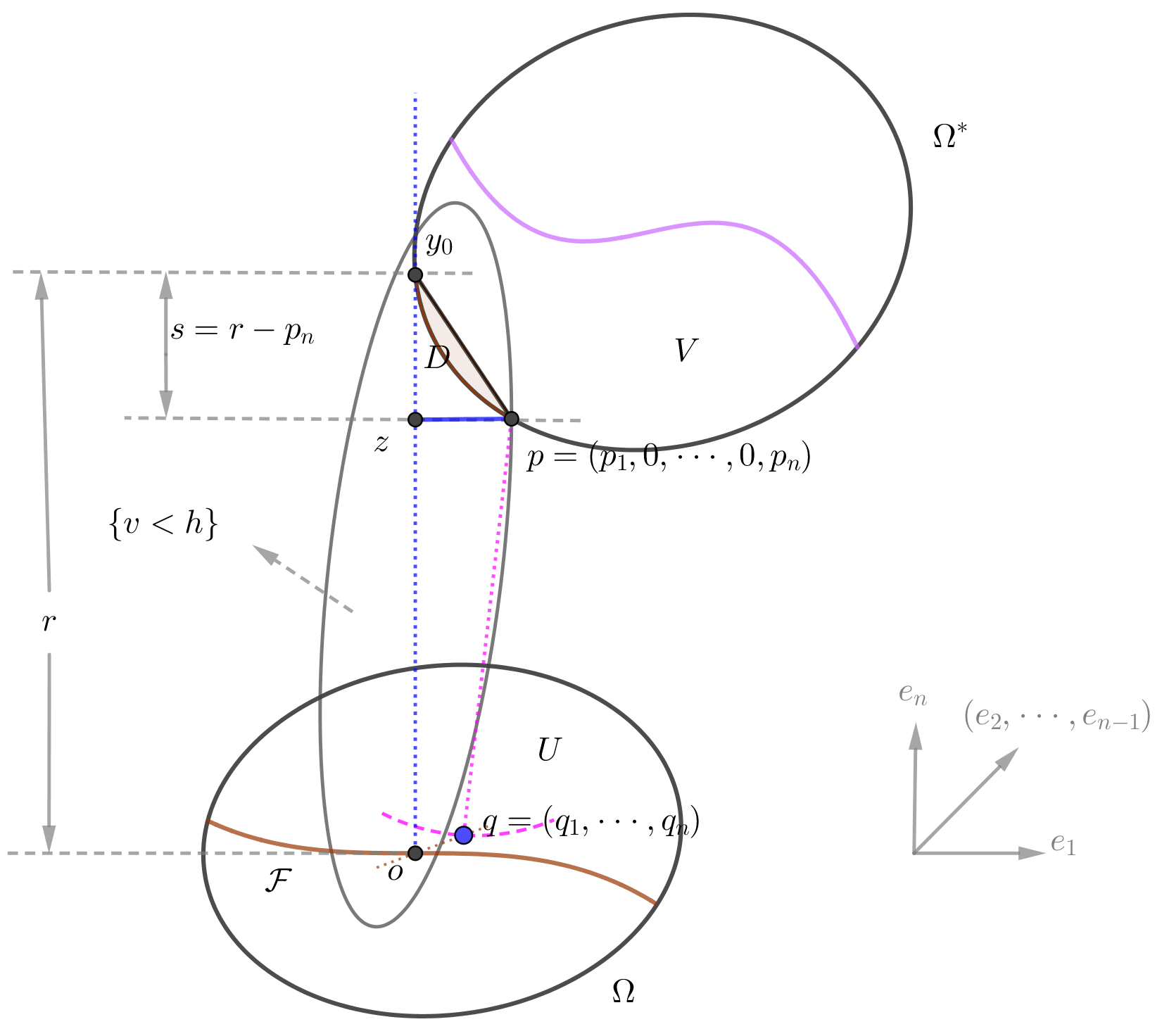}
	\caption{}
    \label{fighd}
\end{figure}

Next we show the first inequality of \eqref{hsh}.
By the reasoning before Lemma \ref{dlemma}, 
we may assume that $v\geq 0$ in $\mathbb{R}^n$, $v(0)=v(y_0)=0$,   and
$v=0$ on the segment $\overline{0y_0}$.
In particular, we have $v(z)=0$,
where $z=p_ne_n$ is the projection of $p$ on the $x_n$ axis.
Denote $q=(q_1,\cdots, q_n) =Dv(p)\in \mathcal{F}$.
By the convexity of $v$, we have 
\begin{equation}\label{h4}
q_1=Dv(p)\cdot e_1\geq \frac{v(p)-v(z)}{|p-z|}=\frac{h}{p_1}\geq C\frac{h}{s^2} .
\end{equation}
By the interior ball property  (Lemma \ref{intball}),  we have $B_{|p-q|}(p)\cap \Omega\subset U$. 
Hence
\begin{equation}\label{h5}
|p-q|^2\leq |p-0|^2.
\end{equation}
By the interior ball property again,  
the free boundary $\mathcal{F}$ lies below the ball $B_r(y_0)$.
It implies
\begin{equation}\label{h55}
	q_n\leq r-\sqrt{r^2-|q'|^2} ,
\end{equation}
where $q'=(q_1,q_2,\cdots,q_{n-1})$.

Note that when $h>0$ is sufficiently small, by strict convexity of $v$ in $V$, $S_h[v]$ will be
small, and then by the continuity of $Dv$, $|q|$ will be small, which ensures $|q'| <r$
and $p_n>r-\sqrt{r^2-|q'|^2}.$
Recall that $p_n=r-s$. By \eqref{h5} and \eqref{h55}, we have
$$|q'|^2+p_1^2-2p_1q_1+\big(r-s-(r-\sqrt{r^2-|q'|^2})\big)^2\leq p_1^2+(r-s)^2, $$
from which one infers that
\begin{align*}
2sr&\leq 2p_1q_1+2sr \Big( 1-\frac{|q'|^2}{r^2} \Big)^{\frac{1}{2}}\\
&\leq 2p_1q_1+2sr-\frac{s}{r}|q'|^2.
\end{align*}
Namely
$\frac{s}{r}|q'|^2\le 2p_1q_1$. 
Noting that $q_1\le |q'|$, we thus obtain  
\begin{equation*}
	\frac{s}{r}q_1 \leq 2p_1.
\end{equation*}
Recall that $p_1 \leq Cs^2+o(s^2)$. By \eqref{h4}, we then deduce
$$ \frac{h}{s r} \leq Cp_1\le Cs^2,$$
from which it follows that $s\geq Ch^{\frac{1}{3}}$. 
So the first inequality of \eqref{hsh} is proved.
\end{proof}

With Lemma \ref{hkey2}, we are now ready to prove Lemma \ref{hkey1}.

\begin{proof}[Proof of Lemma \ref{hkey1}]
Let $D$ be the region defined in the proof of Lemma \ref{hkey2}, (see Fig. \ref{fighd}).
By \eqref{doDD},  
	\begin{equation}\label{doDD1}
		D\subset S_{bh}^c[v].
	\end{equation}
From \eqref{volD} and thanks to \eqref{hsh}, we have
\beq\label{AD}
|D|_{\mathcal{H}_2}=\frac{1}{6}as^3+o(s^3)\geq Ch,
\eeq
provided $h>0$ is small enough.

Let $e\in \text{span}\{e_2,e_3,\cdots,e_{n-1}\}$ be a unit vector.
Denote by  $e^{\perp}$ the subspace orthogonal to $e$, 
passing through the point $y_0$.
Then by \eqref{h1}, \eqref{doDD1} and \eqref{AD}, 
we have  
$$ |S_{bh}^c[v]\cap e^{\perp}|_{\mathcal{H}_{(n-1)}} \geq C h^{(\frac{1}{2}+\epsilon)(n-3)} |D|_{\mathcal{H}_2} \ge C_\epsilon h^{1+(\frac{1}{2}+\epsilon)(n-3)}.$$
Hence, $\forall\,y\in S_{bh}^c[v]$,  by the convexity of $S_{bh}^c[v]$ and the volume estimate \eqref{nm1} we obtain
\begin{equation*}
\begin{split}
	h^{n/2}\approx |S_{bh}^c[v]|_{\mathcal{H}_n} &\geq |S_{bh}^c[v]\cap e^{\perp}|_{\mathcal{H}_{(n-1)}} \times  |(y-y_0)\cdot e|	\\
				& \geq C_\epsilon h^{1+(\frac{1}{2}+\epsilon)(n-3)}  |(y-y_0)\cdot e| ,
\end{split}
\end{equation*}
which implies that $|(y-y_0)\cdot e|\leq C_\epsilon h^{\frac{1}{2}-(n-3)\epsilon}$.   
Note that the constant $b$ in \eqref{equi0} is independent of $h$.
Replacing $h$ with ${h}/{b}$, we then obtain the desired estimate \eqref{xe}.
\end{proof}

%%% $$\text{||||||}$$

\begin{corollary}\label{coroabove}
For any $y\in S_h[v]$,  we have 
$$Dv(y)\cdot e_n\geq -C_{\epsilon}h^{1-\epsilon}\ \ \ \text{for $h>0$ small. } $$
\end{corollary}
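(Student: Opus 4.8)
The plan is to reproduce in $n$ dimensions the mechanism behind Lemma \ref{ineq12} and the estimate $v_2(z_1e_1)\ge -Cz_1^2$ used in Remark \ref{height1}: combine the interior ball property with the size estimates for $S^c_h[v]$ furnished by Lemmas \ref{hkey1} and \ref{hkey2}. First I would fix $x\in S_h[v]$ with $h$ small. Since $v$ is strictly convex up to the boundary at $y=re_n$, the sections $S_h[v]$ shrink to $\{y\}$ as $h\to 0$, so $x_n\ge r/2$ once $h$ is small; moreover, for a.e.\ such $x$ the point $p:=Dv(x)$ lies in $U$ and satisfies $Du(p)=x$. If $|p-x|>|x|$, then $0=\eta$ would be an interior point of $\Om\cap B_{|p-x|}(x)$, so Lemma \ref{intball} would place $0$ in the interior of $U$, contradicting $0\in\mathcal F$; hence $|Dv(x)-x|\le |x|$. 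Projecting onto $e_n$ gives
\[
Dv(x)\cdot e_n \ge x_n-|x| = -\frac{x_1^2+|\tilde x|^2}{|x|+x_n} \ge -\frac{2}{r}(x_1^2+|\tilde x|^2),
\]
and this inequality extends to every $x\in S_h[v]$ by continuity of $Dv$ on $V$.

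It then remains to bound $x_1^2+|\tilde x|^2$ by $Ch^{1-\epsilon}$ for all $x\in S_h[v]$. By Remark \ref{reeq} we have $x\in S^c_{bh}[v]$, so Lemma \ref{hkey1} applied at height $bh$ gives $|\tilde x|\le Ch^{1/2-\epsilon}$. For the $e_1$-component I would rerun the volume-slicing step from the proof of Lemma \ref{hkey1}: by \eqref{h1} the slice $S^c_{bh}[v]\cap\{x_1=0\}$ contains an $(n-1)$-dimensional ball of radius $\approx h^{1/2+\epsilon}$, so that $Ch^{n/2}\ge |S^c_{bh}[v]| \gtrsim (x\cdot e_1)\,h^{(1/2+\epsilon)(n-1)}$, whence $x_1\le Ch^{1/2-(n-1)\epsilon}$. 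Combining the two bounds gives $x_1^2+|\tilde x|^2\le Ch^{1-2(n-1)\epsilon}$, and after relabelling $\epsilon$ the corollary follows from the displayed inequality.

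I expect the delicate points to be the following. First, the interior ball step needs $Dv(x)\in U$ so that Lemma \ref{intball} can be applied with the enclosing ball centred at $x\in V$; this holds only for a.e.\ $x\in S_h[v]$, after which one propagates the estimate to all of $S_h[v]$ by continuity. Second, since $x_1$ and $|\tilde x|$ are controlled only by $h^{1/2-\epsilon}$ and not better, it is essential to use the \emph{quadratic} gain $Dv(x)\cdot e_n\ge -C(x_1^2+|\tilde x|^2)$ produced by the interior ball rather than any linear substitute. Everything else is routine bookkeeping with sections, using Remarks \ref{nm1} and \ref{reeq} to pass between $S_h[v]$ and $S^c_h[v]$.
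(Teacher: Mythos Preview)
Your proof is correct and follows essentially the same route as the paper's: both use the interior ball condition $|Dv(x)-x|\le |x|$ to get $Dv(x)\cdot e_n \ge x_n - |x| = -(x_1^2+|\tilde x|^2)/(x_n+|x|)$, and then bound $x_1^2+|\tilde x|^2$ by $Ch^{1-\epsilon}$ via the section size estimates. The paper compresses the second step into one sentence (``By Lemma \ref{hkey2} and the uniform density property we see that $|x-x_ne_n|\leq h^{\frac{1}{2}-\epsilon}$''), whereas you spell out the control on $|\tilde x|$ via Lemma \ref{hkey1} and on $x_1$ via the slicing argument with \eqref{h1}; these are exactly the ingredients behind the paper's one-line claim.
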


\begin{proof} 
For any given $y\in S^c_h[v]\cap V$,  by Lemma \ref{hkey1},
\begin{equation}\label{ba1}
|y_i|\leq C_\epsilon h^{\frac{1}{2}-\epsilon}\   \ \ \text{for}\ i=2,\cdots, n-1.
\end{equation}
From \eqref{h1},  
$B_{C_\epsilon h^{\frac{1}{2}+\epsilon}}(y_0)\cap\{y_1=0\}\subset S^c_{h}[v].$
Hence by \eqref{nm1}, we have 
$$h^{\frac{n}{2}}\approx |S_{h}^c[v]| \geq C_\epsilon^{n-1}h^{(\frac{1}{2}+\epsilon)(n-1)}y_1 ,$$  
which implies
\begin{equation}\label{ba2}
y_1\leq C_\epsilon h^{\frac{1}{2}-(n-1)\epsilon} . 
\end{equation}

For any given $y\in S_h[v]$,
by the equivalence relation \eqref{equi0}, the above estimates \eqref{ba1} and \eqref{ba2} also hold.
By Lemma \ref{intball}, similarly to \eqref{h5} we have $|y-Dv(y)|\leq |y|$.   
Hence if $Dv(y)\cdot e_n<0$,  we have
 \begin{align}\label{ba3}
 |Dv(y)\cdot e_n|
   & \leq |y|-y_n=\sqrt{y_1^2+\cdots+y_{n-1}^2+y_n^2}-y_n  \nonumber \\
   & \leq  \frac{C}{y_n}(y_1^2+\cdots+y_{n-1}^2).
 \end{align}
When $h>0$ is small, $y$ is close to $y_0$ and $y_n\geq \frac{r}{2}$.
Combining \eqref{ba1}, \eqref{ba2} and \eqref{ba3}, we obtain $Dv(y)\cdot e_n\geq -C_\epsilon h^{1-\epsilon}.$
\end{proof}

%{\Small\color{blue} (Point out that in the rest of the section, 
%we will not use the condition that $\Om^*$ and $\Om$ are separate anymore,
%so that we can assume that $y_0=0$.)}
%{\Small\color{red} ok}

In the rest of the section, 
we will not use the condition that $\Om^*$ and $\Om$ are separate anymore.
By the changes in Remark \ref{y0},  we may assume $y_0=0$ for simplicity.
Let $T_h$ be an affine transformation such that $T_h(S_h^c[v]) \sim B_1(0)$.
Let $T_1:y \mapsto \bar y$ be the transform given by
\beq
\label{T1}
\left\{ 
{\begin{array}{ll}
  \bar y_1=h^{-\frac23}y_1,& \\
  \bar y_i= h^{-\frac12}y_i, & \ \ \ i=2,\cdots, n-1,\\
  \bar y_n=h^{-\frac13}y_n. &
  \end{array}}\right. 
\eeq
The following lemma shows that $T_h$ is close to $T_1$, 
in the sense that the norm of $T_2:=T_h\circ T_1^{-1}$ is bounded by $h^{-\epsilon}$ for any $\epsilon>0$, when $h>0$ small.  
It provides geometric estimates for the shape of the centred sub-level set $S^c_h[v]$.

\begin{lemma}\label{gse1}
For any $\epsilon>0$,
there exists a constant $C_\epsilon>0$ independent of $h$ such that 
\begin{equation}\label{ag1}
B_{\frac{1}{C_\epsilon}h^{\epsilon}}\subset T_1(S_h^c[v])\subset B_{C_\epsilon h^{-\epsilon}},
\end{equation}
and
 \begin{equation}\label{ag2}
 \|T_2\|+\|T_2^{-1}\|\leq C_\epsilon h^{-\epsilon}.
 \end{equation}
\end{lemma}

 \begin{proof} 
Let  $b$ be  the constant in  \eqref{equi0}. 
By \eqref{h1} we have that 
\begin{equation}\label {halo1}
B_{C_\epsilon h^{\frac{1}{2}+\epsilon}}(0)\cap\{y_1=0\}\subset S^c_{bh}[v].
\end{equation}
Let $D$ be domain in the $x_1x_n$-plane, given in the proof of Lemma \ref{hkey2}. 
Let $G$ be the convex hull of the set 
$D\cup \{C_\epsilon h^{\frac{1}{2}+\epsilon}e_i: i=2,\cdots, n-1\}.$
Since $D\subset S_h[v]\subset S_{bh}^c[v]$,   by \eqref{h1} we have $G\subset S_{bh}^c[v].$

By Lemma \ref{hkey2},  we have 
$$T_1(G)\subset B_{C_\epsilon h^{-c_{_1}\epsilon}}(0)\ \ \text{ and }\ \ |T_1(G)|\geq C_\epsilon h^{c_{_2}\epsilon}$$ 
for some constants $c_{_1}, c_{_2}>0$.  Note that the first inclusion uses $p_1=as^2+o(s^2)\lesssim h^{\frac{2}{3}-2\epsilon},$ and the second inequality use \eqref{volD},
the estimate on $|D|_{\mathcal{H}_2}.$

By convexity, it implies that there exists a ball 
\begin{equation}\label{lesia1}
B_{C_\epsilon h^{c_{_3}\epsilon}}(z) \subset T_1(G)
\end{equation}
for some point $z\in T_1(G)$ and some constant $c_{_3}>0$.   
As $\epsilon >0$ can be arbitrarily small,
we may simply assume that  $c_{_3}=1$.  

Since $T_1(G)\subset T_1(S_{bh}^c[v])$ and $|T_1(S_{bh}^c[v])|\approx 1$,   
by \eqref{lesia1} we have  $$\text{diam}(T_1(S_{bh}^c[v])) \leq C_\epsilon h^{-(n-1)\epsilon}.$$   
%Let $E$ be the minimum ellipsoid of $T_1S_{bh}^c[v]$,
 By John's Lemma \cite[Lemma 2.1]{C96}, there exists an ellipsoid $E$ centred at $0,$
such that $E\subset T_1(S_{bh}^c[v])\subset CE$ for some constant $C$ depending only on $n.$   
Let $r_1\leq \cdots \leq r_n$ be  the principal semi-axes of $E$.
Then we have $r_n\leq  C_\epsilon h^{-(n-1)\epsilon}$ and $r_1r_2\cdots r_n\approx |T_1(S_{bh}^c[v])|\approx 1$,   which implies $r_1\geq \frac{1}{C_\epsilon} h^{(n-1)^2\epsilon}.$
Therefore we obtain \eqref{ag1}.

Recall that $T_2\circ T_1(S_h^c[v])=T_h(S_h^c[v]) \sim B_1$. By \eqref{ag1} we have \eqref{ag2}.
\end{proof}

\begin{remark}\label{shapelev}
Note that since $T_h(S^c_h[v])\sim B_1(0),$ by \eqref{equi0} the equivalence relation between $S^c_h[v]$ and $S_h[v]$ 
we have that $T_h(S_h[v])$ also has a good shape and satisfies 
	\beq\label{shapegd}
		B_{\frac{1}{C}}(0)\cap T_h(V)\subset T_h(S_h[v]) \subset B_{C}(0)\cap T_h(V) 
	\eeq 
for some constant $C>0$ independent of $h.$
\end{remark}

With Lemma \ref{gse1} for the geometric estimate of the sub-level set $S^c_h[v]$, 
we can now carry out the normalisation process. 
Let
	\begin{equation}\label{novh}
		v_h(y):=\frac{1}{h}v(T_h^{-1}y).
	\end{equation}
Similarly to the claim following \eqref{buvh}, $v_h$ is locally uniformly bounded in $\R^n$ as $h\to0$. 
Hence by passing to a subsequence, $v_h\rightarrow v_0$, $T_h(V)\rightarrow V_0$ locally uniformly,
and $v_0$ satisfies 
\begin{equation}\label{maeq111}
\det\, D^2 v_0=c_0\chi_{_{V_0}}\quad\text{in } \R^n
\end{equation}
for a constant $c_0>0$.
Here by $T_{h}(V)\rightarrow V_0$ locally uniformly we mean that for any fixed $k>0$ large, $T_{h}(V)\cap B_k(0)$ converges to $V_0\cap B_k(0)$ as $h\rightarrow 0$ in Hausdorff distance. 
Note that $T_{h}(V)\cap B_k(0)$ is convex when $h$ is sufficiently small.
Since for any fixed $k>0,$
we have the diameter of $T_h(V)\cap B_k(0)$ is uniformly bounded for all $h$ small, hence by the Blaschke 
selection theorem that up to a subsequence we have $T_h(V)\cap B_k(0)$ converges to a convex set. Then by the
standard diagonal method, we can choose a subsequence such that $T_{h}(V)\rightarrow V_0$ locally uniformly.

Since $V_0$ is convex,  the doubling property holds for the centred sub-level sets of $v_0$, namely
$$ \Big|\frac{1}{2}S_h^c[v_0](y) \cap V_0 \Big| \geq C \Big|S_h^c[v_0](y)\cap V_0\Big|\quad\forall\ y\in \overline{V_0}, $$ 
where the constant $C$ depends only on $n$. 
%{\Small\color{blue}($C$ also depends on the principal curvature of $\p V_0$, which in turn depends in $\Om, \Om^* \cdots$?)}
As $v_0$ is a global convex function, $\partial v_0(\R^n)$ is also convex. 
Hence, by \eqref{maeq111} and Caffarelli's boundary regularity theory \cite{C92b},
 $v_0$ is strictly convex and $C^1$ smooth in $\overline{V_0}$. 
However, unlike \eqref{limU0} in dimension two, we do not have any further information 
on the regularity of  $\p U_0$, where $U_0$ is the interior of  $\partial v_0(\R^n)$.
Thus we cannot infer higher regularity of $v_0$ at the moment.
To overcome this difficulty, our strategy is to show 
that the blow-up limits $U_0, V_0$ have nice decomposition properties (Lemmas \ref{split1}--\ref{hkey6}).
%$U_0, V_0$ are flat in the $e_2,\cdots,e_{n-1}$ directions (Lemmas \ref{split1}--\ref{hkey6}).

Denote $V_h=T_h(V)$.
% and $V_0=\lim_{h\to0}V_h$.  
The following lemma shows that in the normalisation \eqref{novh}, 
the modulus of convexity and the $C^{1,\alpha'}$ norm of $v_h$ are locally uniformly bounded as $h\to0$. 

\begin{lemma}\label{uniformestimate1}

There exist constants $\alpha'\in(0,1]$ and $\beta' \ge 2$ such that
 \begin{equation}\label{ddes}
 C_1|y|^{\beta'} \leq  v_h(y)\leq C_2|y|^{1+\alpha'}\quad \text{ for }y\in B_1(0)\cap V_h,
\end{equation}
where  the positive constants $C_1$ and $C_2$ are independent of $h.$
\end{lemma}

\begin{proof}
Since $T_h(S_h^c[v])\sim B_1$, 
%by \eqref{nm1} and \eqref{equi0}, 
by Remark \ref{shapelev}
$T_h(S_h[v])$ has a good shape and 
\begin{equation}\label{noSh} 	
	B_{\frac{1}{C}}(0)\cap V_h\subset T_h(S_h[v]) \subset B_{C}(0)\cap V_h
\end{equation}
 for a constant $C$ independent of $h.$

The geometric decay estimate (see \cite[Lemma 2.2]{C96} or \cite[Lemma 7.6]{CM}) implies that for
 any given $s_1<1,$ there exists a constant $s_0<1$ independent of $h$ such that
	\begin{equation}\label{gdecay3}
	 S^c_{\bar{s} h}[v]\subset s_1S^c_h[v]\quad\  \forall\,\bar{s}\in (0, s_0).
	 \end{equation}

 Since \eqref{gdecay3} is invariant under the normalisation \eqref{novh}, the inclusion \eqref{gdecay3} still holds for $v_h,$
 namely, given $h$ small we have
  \begin{equation}\label{decaybegin}
 S^c_{\bar{s} \tilde h}[v_h]\subset s_1S^c_{\tilde h}[v_h]\quad\  \forall\,\bar{s}\in (0, s_0).
  \end{equation}
   for $\tilde{h}<1.$
Choose $s_1=\frac{1}{2}$ and let $\bar{s}=\frac{1}{4}s_0<\frac{1}{4}.$
By \eqref{noSh} we have $B_{\frac{1}{C}}(0)\subset S^c_1(v_h)\subset B_C(0).$
For any $y\in B_1(0)\cap V_h,$ let $k$ be the positive integer satisfying 
\begin{equation}\label{ydd1}
C2^{-k}< |y|\leq C2^{-k+1}.
\end{equation}
By \eqref{decaybegin}, we have $y\notin \frac{1}{2^k}S_1^c[v_h]\supset S^c_{\bar{s}^k}[v_h].$
By  \eqref{equi0}, we have $S^c_{\bar{s}^k}[v_h]\cap V_h\supset S_{b^{-1}\bar{s}^k}[v_h].$
Hence $v_h(y)\geq b^{-1}\bar{s}^k.$
From \eqref{ydd1}, it follows that $k\geq \frac{\log(2C)-\log|y|}{\log 2}.$ 
Therefore, $v_h(y)\geq C_1|y|^{\beta'},$ where
$C_1=b^{-1}\bar{s}^{\frac{\log(2C)}{\log 2}},$ and $\beta'=-\frac{\log\bar{s}}{\log 2}.$

To prove the second inequality, we \emph{claim} that there exists a constant $\delta>0$ such that
\begin{equation}\label{regu11}
v(\frac{1}{2}z)\leq \frac{1}{2}(1-\delta)v(z)\ \ \text{for any $z\in B_1(0)\cap V.$}
\end{equation}
 Indeed, if the claim fails, then there exist  $\delta_k\rightarrow 0$, $z_k\in B_1(0)\cap V$
 such that 
  $$v(\frac{1}{2}z_k)\geq \frac{1}{2}(1-\delta_k)v(z_k).$$ 
The strict convexity of $v$ implies that 
$z_k\rightarrow 0$  and $h_k:=v(z_k)\rightarrow 0$  as $k\rightarrow \infty.$ 
Denote $\hat{z}_k=T_{h_k}z_k.$ 
Then  we have $v_{h_k}(\hat{z}_k)=1$ and 
$$v_{h_k}(\frac{1}{2}\hat{z}_k)\geq \frac{1}{2}(1-\delta_k)v_{h_k}(\hat{z}_k).$$
 By passing to a subsequence, we may assume that $\hat z_k\rightarrow z_0\in \overline{V_0}$ 
 and $v_0(\frac{1}{2}z_0)=\frac{1}{2}v_0(z_0).$ 
 By convexity, we see that $v_0$ is linear on the segment  $\overline{0z_0},$ 
 which contradicts to the strict convexity of $v_0$ in $\overline{V_0}.$ 
 Hence, the claim \eqref{regu11} is proved. 
 
 Since \eqref{regu11} is invariant under the normalisation \eqref{novh}, 
 it also holds for $v_h$. 
 Hence $v_h(\frac{1}{2}y)\leq \frac{1}{2}(1-\delta)v_h(y)$ for $y\in B_{\frac{1}{C}}(0)\cap V_h.$
By iteration we obtain $v_h(\frac{1}{2^k}y)\leq \frac{1}{2^k}(1-\delta)^kv_h(y)$.  
Hence there exist constants $\alpha'\in (0,1]$ and $C_2>0$, independent of $h$, 
such that $v_h(y)\leq C_2|y|^{1+\alpha'}$ for $y\in B_1(0)\cap V_h.$ 
 \end{proof}

\begin{lemma}\label{split1}
For the limit $V_0=\lim_{h\to0}V_h$, we have the decomposition
	\begin{equation}\label{split11}
	V_0=\omega_0^*\times H^*_0,
	\end{equation}  
where $H^*_0$ is an $n-2$ dimensional subspace of $\mathbb{R}^n$,  
$\omega_0^*\subset (H^*_0)^{\perp}:=\{y\in\mathbb{R}^n: y\perp H^*_0\}$ is convex,
and $\omega_0^*$ is smooth. Moreover, $\omega_0^*$ can be represented as an epigraph of some convex 
function.
\end{lemma}

\begin{proof}  
Recall that  the boundary $\partial V$ is uniformly convex and is given by the function $\rho^*$ in \eqref{defppp}.
Let $e\in H:=\text{span}\{e_2, e_3,\cdots, e_{n-1}\}$ be any given unit vector. 
Let 
	$$z=te+\rho^*(te)e_1 \in \partial V$$ 
be a boundary point, where $t=h^{\frac{1}{2}-2\epsilon}$ and $\epsilon>0$ is sufficiently small. 
Let's track the behaviour of the point $z$ under the affine transformation $T_h=T_2\circ T_1$.

By \eqref{T1}, we see that $T_1z=h^{-2\epsilon}e+h^{-2/3}\rho^*(te)e_1$.   Hence by \eqref{ag2} we have
\begin{equation}\label{flatt1}
|T_hz|\geq C_\epsilon h^{-\epsilon}\rightarrow\infty \quad \text{ as } h\rightarrow 0.
\end{equation}
Meanwhile, since $0\leq \rho^*(te)\leq Ct^2=Ch^{1-4\epsilon}$,  by \eqref{ag2} we also have 
\begin{equation}\label{flatt2}
\text{dist}(T_hz,T_hH )\leq \|T_2\|  h^{-2/3}\rho^*(te)
        \leq C_\epsilon h^{\frac{1}{3}-5\epsilon}\rightarrow 0 \quad \text{ as } h\rightarrow 0.
\end{equation}
Up to a subsequence, we assume that $T_hH$ converges to an $n-2$ dimensional subspace $H_0^*$
in the sense that $T_hH\cap B_k(0)$ converges to $H_0^*\cap B_k(0)$ in Hausdorff distance, for all given $k>0.$ Indeed,
since $T_hH$ is an $n-2$ dimensional subspace, we may assume $T_hH$ to be the  orthogonal complement of 
$\text{span}\{e_{1h}, e_{nh}\}$ with two orthogonal
unit vectors $e_{1h}$ and $e_{nh}.$ Then since $e_{1h}, e_{nh}\in \mathbb{S}^n$, up to a subsequence we may assume 
$e_{1h}, e_{nh}$ converges to $e_{10}, e_{n0}$, respectively. Let $H_0^*$ be the $n-2$ dimensional subspace orthogonal to 
$\text{span}\{e_{10}, e_{n0}\},$ then we have the desired convergence as above.

Given any  $y\in H_0^*,$ by the discussion above, we have that there exists a point $y_h\in T_hH$ such that
$y_h\rightarrow y$ as $h\rightarrow 0.$ Let $e_h:=\frac{T_h^{-1}y_h}{|T_h^{-1}y_h|},$ and
$z_h=te_h+\rho^*(te_h)e_1\in \partial V,$ where $t=|T_h^{-1}y_h|$ provided $h$ is small enough.
 Then, by \eqref{ag2} we have that $t\leq \frac{1}{C_\epsilon}h^{\frac{1}{2}-\epsilon}.$
 By the same computation leading to \eqref{flatt2}, we have that $\text{dist}(T_hz_h, T_h(te_h) )\rightarrow 0$ as $h\rightarrow 0.$ Note that $T_h(te_h)=y_h\rightarrow y$ as $h\rightarrow 0.$ Hence $\partial V_h\ni T_hz_h\rightarrow y$ as $h\rightarrow 0,$
 which implies that $y\in \partial V_0.$ Hence, $H_0^*\subset \partial V_0.$
By  the convexity of $V_0$, it follows that $V_0=\omega_0^*\times H^*_0$, where $\omega_0^*$ is a convex set in $(H^*_0)^{\perp}.$

Next we prove the smoothness of $\omega_0^*.$
From \eqref{defppp}, one  sees that 
\begin{equation}\label{faxiang1}
\tilde{e}_{h}:=\frac{(T_h^t)^{-1}e_1}{|(T_h^t)^{-1}e_1|}
\end{equation}
 is the unit inner normal of $V_h$ at $0$, where $T_h^t$ is the transpose of $T_h$ as a matrix.    
 Denote the unit vector $\tilde{e}'_{h}=\frac{T_h^{-1}(T_h^t)^{-1}e_1}{|T_h^{-1}(T_h^t)^{-1}e_1|},$ namely $T_h\tilde{e}'_{h}$ is in the direction of 
 $\tilde{e}_{h}.$
  By the definition of $T_h,$ a direct computation
 shows that 
\begin{equation} \label{e'h}
 \tilde{e}'_h\cdot e_1\geq C_\epsilon h^{4\epsilon}.
 \end{equation}
  By the $C^2$ regularity of $\partial V$ at 0 (see \eqref{defppp}) we have $x_h=(x_1, x_2, \cdots, x_n):=h^{6\epsilon}\tilde{e}'_h\in V$ provided $h$ is small.
  Indeed, by \eqref{e'h} we have
  $$x_1=h^{6\epsilon}\tilde{e}'_h\cdot e_1\geq C_\epsilon h^{10\epsilon} \gg h^{12\epsilon}\geq \sum_{i=2}^n |x_i|^2$$ for $h$
  small, which implies that $x_h\in V.$
Hence
 \begin{equation}\label{e'h11}
 T_hx_h=|T_hx_h|\tilde{e}_h\in V_h.
 \end{equation}
  By the definition of $T_h$ we have
  \begin{equation}\label{e'h12}
  |T_hx_h|\geq C_\epsilon h^{-\frac13+7\epsilon}\rightarrow \infty\ \ \text{ as } \ h\rightarrow 0.
  \end{equation}

 Extend the quadratic polynomial $P$ in \eqref{defppp} to $\R^n$ such that
 $$ \tilde P(y_1, \hat y) = P(\hat y), \quad \hat y=(y_2,\cdots, y_n).$$ 
Recall that, by   \eqref{defppp},
 	$$\partial V= \left\{(y_1,\hat y) \,:\, y_1 = P(\hat y)+o(P)\right\}\ \ \text{ near }0.$$    
By a straightforward computation, we have
\begin{equation}\label{h10}
\partial V_h=\big\{y \,:\, \langle y,\tilde{e}_h\rangle=\tilde{P}_h(y)+o(\tilde{P}_h)\big\} \ \  \text{ near }0,
\end{equation}
where $\tilde{P}_h(y)=\frac{1}{|(T_h^t)^{-1}e_1|}\tilde P(T_h^{-1}y)\geq 0$,
and
\begin{equation}\label{colla}
B_1(0)\cap V_h\subset \Big\{y \,:\, \langle y,\tilde{e}_h\rangle\geq \frac12\tilde{P}_h(y) \Big\} \ \ \text{ for $h>0$ small}. 
\end{equation}

We \emph{claim} that the coefficients of the quadratic polynomial $\tilde P_h$ are uniformly bounded as $h\to0$. 
Assume the claim for a moment.
Then by passing to a subsequence,
we have $\tilde{e}_h\rightarrow e_0^*, \tilde{P}_h \rightarrow \tilde{P}_0$
for a unit vector $e_0^*$ and a quadratic polynomial $\tilde{P}_0$.
Moreover,  the higher order term $o(\tilde P_h)$ in \eqref{h10}
converges  to $0$ locally uniformly as $h\to 0$.
Hence $\partial V_0=\{y  \,:\, \langle y, e_0^*\rangle=\tilde{P}_0\}$ is smooth,  
which implies that $\omega_0^*$ is  smooth.
 By \eqref{e'h11}, \eqref{e'h12} and convexity of $V_0,$ passing to limit, we have
\begin{equation}\label{epigraph1}
\{te_0^*: t>0\}\subset V_0,
\end{equation}
which implies that $w^*_0$ is an epigraph of some convex function.

It remains to prove the above claim.    
Let $d_h$ be the largest coefficient of $\tilde P_h$.
Suppose by contrary that $d_h\rightarrow \infty$ as $h\rightarrow 0$.   
Then $\frac{1}{d_h}\tilde{P}_h$ has bounded coefficients, and up to a subsequence we assume that 
$\frac{1}{d_h}\tilde{P}_h\rightarrow \tilde{P}_*$ for a quadratic polynomial $\tilde P_*$ whose largest coefficient equals $1$.
Hence by \eqref{colla},
$$ B_1(0)\cap V_h\subset 
B_1(0)\cap \Big \{ y \,:\, \frac{1}{d_h}\langle y,\tilde{e}_h\rangle\geq \frac{1}{2d_h}\tilde{P}_h(y) \Big\}.$$
Since $\tilde{P}_h(y)$ is a non-negative quadratic polynomial, we have that 
$$Q_h:=B_1(0)\cap \Big \{ y \,:\, \frac{1}{d_h}\langle y,\tilde{e}_h\rangle\geq \frac{1}{2d_h}\tilde{P}_h(y) \Big\}$$ is convex and uniformly bounded. Then, by Blaschke selection theorem, up to a subsequence, we may assume $Q_h$ converges to a convex set $Q_\infty$ 
in Hausdorff distance.  We claim that $|Q_\infty|=0.$ Suppose not, then there exists a ball $B_r(q)\subset Q_\infty.$ Hence,
$B_{\frac{r}{2}}(q)\subset Q_h$ for $h$ sufficiently small. This implies that $\frac{1}{d_h}\langle y,\tilde{e}_h\rangle\geq \frac{1}{2d_h}\tilde{P}_h(y)$ in $B_{\frac{r}{2}}(q),$ and passing to limit $h\rightarrow 0,$ we have that  $\tilde{P}_*=0$ in $B_{\frac{r}{2}}(q),$ contradicting to the fact that  the largest coefficient of $\tilde P_*$ equals $1$.  Therefore $|Q_\infty|=0.$ 
Since the convex set $Q_h\rightarrow Q_\infty$ in Hausdorff distance, and $B_1(0)\cap V_h\subset Q_h,$
we see that $|B_1(0)\cap V_h| \to 0$ as $h\rightarrow 0$.
On the other hand, by the uniform density property (Lemma \ref{ud1}), we have 
$
|B_1(0)\cap V_h|\geq \epsilon_0
$
 for some positive constant $\epsilon_0$ independent of $h$,
which leads to a contradiction.
The claim is thus proved.
\end{proof}

Note that under the normalisation \eqref{novh}, we have 
	\begin{equation}\label{Dvhy}
		Dv_h(y)=\frac{1}{h}(T^t_h)^{-1}Dv(T_h^{-1}y),
	\end{equation}  
where $T_h^t$ is the transpose of $T_h$ as a matrix. 
Denote $T_h^*:=\frac{1}{h}(T^t_h)^{-1}$. 
Then correspondingly, the free boundary $\mathcal{F}\subset Dv(\partial V)$ 
is changed to $T_h^*(\mathcal{F})$ by the normalisation \eqref{novh}.

Similarly to the decomposition following \eqref{T1}, we can decompose $T_h^*=T_2^*\circ T_1^*$ 
with $T_1^*=\frac{1}{h}(T_1^t)^{-1}$ and $T_2^*=(T_2^t)^{-1}$. From \eqref{T1}, 
the transform $T_1^* : x\mapsto \bar x$ is a rescaling given by
$$ 
\left\{ 
{\begin{array}{ll}
  \bar{x}_1=h^{-\frac13}x_1; & \\
  \bar{x}_i=h^{-\frac12}x_i & \ \ \ i=2,\cdots, n-1,\\
  \bar{x}_n=h^{-\frac23}x_n. &
  \end{array}}\right. 
$$
By Lemma \ref{gse1}, we also have the estimate $\|T_2^*\|+\|(T_2^*)^{-1}\|\lesssim h^{-\epsilon}$, 
similarly to \eqref{ag2}. 
In the following we denote $T_h^*(U)$ by $U_h.$

 \begin{lemma}\label{goodinclu1}
 For any $\tau>0$ large, there exists a constant $M_\tau>0$ independent of $h$ such that 
 \begin{equation}\label{ginl}
	B_\tau(0)\cap U_h\subset Dv_h(B_{M_\tau}(0)\cap V_h) \quad\text{ for $h>0$ small}.
\end{equation}
 \end{lemma}

 \begin{proof}
The inclusion \eqref{ginl} essentially follows from Lemma \ref{uniformestimate1}. 
In particular, for $\tau>0$ small enough (say, $\tau<C_1$ in \eqref{ddes}), 
\eqref{ginl} follows directly from the the first inequality in \eqref{ddes}. 
For $\tau>0$ large, we prove \eqref{ginl} by a re-scaling as follows.

Let $y\in V_h\setminus \{v_h<1\},$ such that $v_h(y)\geq1$.   
By the convexity of $v_h$ and 
\eqref{ddes} we have  
 \begin{equation}\label{uni444}
 \frac{v_h(y)}{|y|}\geq c_1
 \end{equation}
for some constant $c_1$ independent of $h.$
 For the given $\tau>0,$ by \eqref{ddes} and
 since the $C^{1,\alpha'}$ norm of $v_h$ is independent of $h,$
 there exists a small constant $\epsilon_\tau>0$, independent of $h$, such that 
 \begin{equation}\label{uni222}
 Dv_h(\{v_h<\epsilon_\tau\}\cap V_h)\subset \frac{1}{\tau}B_{c_1}(0)\cap U_h.
 \end{equation}
  
 Let $q$ be the intersection of the segment $\overline{0y}$ 
 and level set $\{v_h=\epsilon_\tau\}$, such that $v_h(q)=\epsilon_\tau$. 
 By \eqref{uni222} we have 
 \begin{equation}\label{uni555}
 \frac{v_h(q)}{|q|}\leq |Dv_h(q)| \leq \frac{1}{\tau}c_1.
 \end{equation}
  Let $\check q:=T_{\epsilon_\tau h}T_h^{-1}q$ such that $v_{\epsilon_\tau h}(\check q)=1$,  
 and let $\check y:=T_{\epsilon_\tau h}T_h^{-1}y$ such that $v_{\epsilon_\tau h}(\check y) \geq 1/\epsilon_\tau$. 
  Then, since \eqref{uni444} is independent of $h,$ we have
 \begin{equation}\label{uni666}
 \frac{v_{\epsilon_\tau h}(\check q)}{|\check q|}\geq c_1.
 \end{equation}
 Since $\frac{v_{\epsilon_\tau h}(\check y)}{v_h(y)}=\frac{v_{\epsilon_\tau h}(\check q)}{v_h(q)} $ 
 and $\frac{|\check{y}|}{|y|}=\frac{|\check{q}|}{|q|},$ by \eqref{uni444}, \eqref{uni555} and \eqref{uni666}
 we obtain 
 \begin{equation}\label{uni777}
 \begin{split}
 |Dv_{\epsilon_\tau h}(\check y)| &\geq \frac{v_{\epsilon_\tau h}(\check y)}{|\check y|} =  \frac{v_{\epsilon_\tau h}(\check q)}{|\check q|} \left(\frac{v_h(y)/|y|}{v_h(q)/|q|}\right) \\ 
 & \geq c_1 \frac{c_1}{c_1/\tau} \geq \tau c_1.
 \end{split}
 \end{equation}
 
Note also that for the $\epsilon_\tau<1$ small, by the convexity of $v_{\epsilon_\tau h}$ and \eqref{ddes} one has
 \begin{equation}\label{tauinclu}
 \Big\{v_{\epsilon_\tau h}<\frac{1}{\epsilon_\tau}\Big\}\cap V_{\epsilon_\tau h}
   \subset \frac{1}{\epsilon_\tau}\Big\{v_{\epsilon_\tau h}<1\Big\}\cap V_{\epsilon_\tau h}
    \subset B_{\frac{C}{\epsilon_\tau}}\cap V_{\epsilon_\tau h}
    \end{equation}
for some constant $C$ independent of $h.$
Therefore, from \eqref{tauinclu} it follows that 
 for any $\check y\in V_{\epsilon_\tau h}$ with $|\check y| \geq C/\epsilon_\tau$, one has
$v_{\epsilon_\tau h}(\check y)\geq \frac{1}{\epsilon_{\tau}},$ and then by \eqref{uni777}
 we have $|Dv_{\epsilon_\tau h}(\check y)|\geq \tau c_1$.  Namely,
 \begin{equation}\label{uni888}
 B_{\tau c_1}(0)\cap U_{\epsilon_\tau h}\subset Dv_{\epsilon_\tau h}\Big(B_{\frac{C}{\epsilon_\tau}}\cap V_{\epsilon_\tau h}\Big).
 \end{equation}
The conclusion \eqref{ginl} now follows from \eqref{uni888} by replacing $h$ with ${h}/{\epsilon_\tau}$.  
 \end{proof}

Denote by $U_0$ the interior of  $\partial v_0(\mathbb{R}^n)$. 
We have the following observation. 

\begin{lemma}\label{split2}
The set $U_0$ is convex, and can be decomposed into 
\begin{equation}\label{split3}
U_0=\omega_0\times H_0,
\end{equation}
 where $H_0$
is an $n-2$ dimensional subspace of $\mathbb{R}^n$,  and $\omega_0$ is a convex set in 
$H_0^\perp:=\{x:x\perp H_0\}.$
\end{lemma}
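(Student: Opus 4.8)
The plan is to mirror the structure of Lemma \ref{split1}, but now working on the target side, i.e.\ with the image $\Omega_0=Dv_0(\mathbb{R}^n)$ instead of the domain $\Omega_0^*$. The key point is that $Dv_0$ is the optimal transport map taking $\Omega_0^*$ onto $\Omega_0$, and by Lemma \ref{split1} we already know $\Omega_0^* = \omega^*\times\mathbb{R}^{n-2}$ up to a rotation fixing $\text{span}\{e_1,e_n\}$; I will transfer this product structure through the map. First I would establish that $\Omega_0$ is convex: this follows because $\Omega_0 = Dv_0(\mathbb{R}^n)$ and $v_0$ is a limit of the normalized convex potentials $v_h$, so $\Omega_0 = \partial v_0(\mathbb{R}^n)$ is the image of a convex function's subdifferential, and one checks using the Alexandrov equation $\det D^2 v_0 = \chi_{\Omega_0^*}$ together with the fact that $v_0$ is the Legendre-type dual potential of a convex function supported on the convex set $\Omega_0$ (since each $v_h$ has Monge--Amp\`ere measure supported on the corresponding rescaled fixed/active target, whose limit is convex by the convexity of $\Omega$ and $U$'s interior ball property rescaled appropriately).

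Next I would identify $H_0$. Under $T_1^*$, the directions $e_2,\dots,e_{n-1}$ get dilated by $h^{-1/2}$, which is the \emph{intermediate} scaling, just as $T_1$ did on the source side — this is the reason the same $(n-2)$-dimensional flat directions survive in the limit. Concretely: take any unit vector $e\in H=\text{span}\{e_2,\dots,e_{n-1}\}$ and a point $x\in\mathcal{F}$ of the form (roughly) $x = te + \rho(te)e_n + \cdots$ with $t = h^{1/2-2\epsilon}$; by the interior ball bound $|\rho|\le C|x|^{1+\alpha}$ the $e_1$- and $e_n$-components of $T_1^* x$ go to zero while the $H$-component blows up like $h^{-2\epsilon}$. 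Combined with the convexity of $T_h^*(\mathcal F)$ this forces $T_h^*(\mathcal F\cap\text{span}\{e_2,\dots,e_{n-1}\})$ to converge to an $(n-2)$-dimensional subspace $H_0\subset\partial\Omega_0$, exactly as in the proof of \eqref{flatt1}--\eqref{flatt2}. By convexity of $\Omega_0$, a flat $(n-2)$-plane in its boundary through the boundary point $0$ (recall $Dv_0(0)=0$ after the normalization) forces the splitting $\Omega_0 = \omega_0\times H_0$ with $\omega_0\subset H_0^\perp$ convex; this is the standard fact that a convex body whose boundary contains a full affine subspace of codimension two is a product with that subspace's direction (after noting the supporting hyperplane at $0$ must contain $H_0$, and the body lies in a slab in the remaining two transverse directions is not needed — convexity of the whole set plus $H_0\subset\partial\Omega_0$ suffices once one checks $\Omega_0$ contains a neighborhood of $0$ within $H_0$, which comes from \eqref{nice11}-type lower bounds on $Dv_0(S_0)$).

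The main obstacle I anticipate is the compatibility of the two rotations: Lemma \ref{split1} produces its splitting ``up to a rotation of coordinates'' that is only guaranteed to fix $\text{span}\{e_1,e_n\}$, and here I need the flat directions $H_0$ of $\Omega_0$ to match (the rotated) $H$ as well, and moreover to be consistent with the rotation used for $\Omega_0^*$, so that $Dv_0$ respects the product structure and actually restricts to an optimal map $\omega^*\to\omega_0$ in the two-dimensional transverse slice — this is what the subsequent argument will need. I would handle this by choosing coordinates once and for all so that $H$ is the common set of intermediate-scaling directions for both $T_1$ and $T_1^*$ (they are literally the same indices $2,\dots,n-1$ by construction), observing that the limiting subspace $H_0$ on the target side and $H_1$ on the source side are both obtained as limits of $T_hH$ resp.\ $T_h^*H$ and hence are linked through $(T_h^t)^{-1}$, and then arguing that $Dv_0$ is translation-invariant in the $H_0$ directions because the Monge--Amp\`ere equation, the boundary data, and the domain $\Omega_0^*$ are all invariant under translations in the corresponding source directions (a uniqueness argument for the optimal map). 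Everything else — convexity, the dimension count, smoothness of $\omega_0$ near $0$ if needed — is routine given the tools already assembled in \S\ref{S4}.
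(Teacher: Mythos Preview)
Your overall strategy---tracking points of $\mathcal{F}$ under the dual transform $T_h^*$ and showing they accumulate on an $(n-2)$-dimensional flat---matches the paper's, but the execution has a real gap. You invoke ``the convexity of $T_h^*(\mathcal F)$'' to force convergence to a flat subspace, exactly mimicking the proof of Lemma~\ref{split1}. This fails: unlike $\partial\Omega^*$, the free boundary $\mathcal{F}=\partial U\cap\Omega$ is \emph{not} convex, so neither is $T_h^*(\mathcal{F})$, and the flattening argument from \eqref{flatt1}--\eqref{flatt2} does not transfer. The paper avoids this entirely. It first proves $\Omega_0$ is convex via a clean extreme-point argument: since $\det D^2v_0=\chi_{\Omega_0^*}$, the extreme points of any contact set $\{v_0=L\}$ lie in $\overline{\Omega_0^*}$, hence $Dv_0(\mathbb{R}^n)=Dv_0(\overline{\Omega_0^*})$, and strict convexity of $v_0$ on the convex set $\overline{\Omega_0^*}$ makes this image convex. (Your own convexity argument is vague and partly circular.) Then the paper shows the chosen points $z\in\overline U$ satisfy $T_h^*z\in\overline{\Omega_0}$ in the limit, so $H_0\subset\Omega_0$---the \emph{set}, not its boundary---and convexity of $\Omega_0$ alone gives the product splitting.

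There is a second, more technical gap. The bound $|\rho|\le C|x|^{1+\alpha}$ you cite is not sharp enough: under $T_1^*$ the $e_n$-component of $z=te+\rho(te)e_n$ becomes $h^{-2/3}\rho(te)$, and with $t=h^{1/2-2\epsilon}$ and only $|\rho|\le Ct^{1+\alpha}$ this is of order $h^{-1/6+\alpha/2-O(\epsilon)}$, which goes to zero only when $\alpha>1/3$---not guaranteed. The paper uses instead the sharp interior-ball upper bound $\rho\le Ct^2$ when $\rho\ge0$, giving $h^{1/3-4\epsilon}\to0$; when $\rho<0$ it replaces $z$ by $te\in U$, for which the distance to $H$ is trivially zero. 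Finally, your discussion of ``compatibility of the two rotations'' and translation-invariance of $Dv_0$ lies outside what this lemma asserts; the paper separates that step into Lemma~\ref{hkey6}.
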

\begin{proof}
Since $v_0$ is a convex function on the entire space $\mathbb{R}^n,$ it is well known that the interior of $\partial v_0(\mathbb{R}^n)$ is a convex set.
Originally, by the second inequality of \eqref{domain1}
we have
$$\hat U:=\{x: x_n > C|x|^2\}\cap B_{r_1}\subset U\cap B_{r_1}$$ 
for some small $r_1>0$. 
By passing to a subsequence, 
we may assume the sequence of convex sets $T^*_h\hat U$ converges to a convex set $\hat U_0$ locally uniformly, as $h\to0$.
Similarly to the proof of Lemma \ref{split1}, by replacing $T_1, T_2$ therein with $T_1^*, T_2^*$, 
we see that $\partial{\hat U_0}$  contains an $n-2$ dimensional subspace $H_0$ of $\mathbb{R}^n.$
  By Lemma \ref{goodinclu1} we have $H_0\subset \partial{\hat U_0}\subset \partial v_0(\mathbb{R}^n)\subset \overline{U_0}.$
 By convexity of $U_0$,  we see that it must split as \eqref{split3}. 
\end{proof}

Let $u_0$ be the Legendre transform of $v_0,$ 
namely, 
	\beq\label{v0dual121}
		u_0(x)=\sup_{y\in \mathbb{R}^n} \left\{ x\cdot y-v_0(y) \right\} \quad \text{for}\ x\in \overline{U_0}.
	\eeq

\begin{lemma} \label{u0v0}
We have the following properties for $u_0, v_0$:\\
1. $v_0$ is $C^1$ and strictly convex in $\overline{V_0}.$ Moreover,
$v_0,$ as a convex function defined on $\mathbb{R}^n,$  is differentiable at all point $y\in \overline{V}_0.$\\
2.  $u_0$ is $C^1$ and strictly convex in $B_r(0)\cap \overline{U_0}$ for some $r>0$ small.
\end{lemma}
\begin{proof}
Since $V_0$ is convex, we have that the Monge-Amp\`ere measure $\det\, D^2v$ is doubling, hence
$S^c_h[v_0](y)$  has geometric decay property for any $y\in \overline{V_0}\cap B_{K}(0) ,$ given any fixed $K.$  
 By the similar proof to 
Lemma \ref{uniformestimate1}, we have that $v_0$ restricted to $B_{K}(0)\cap \overline{V_0}$  is strictly convex and $C^{1},$ for any fixed $K>0.$
Now, we only need to prove that $v_0$ is differentiable at $\partial V_0.$ The proof follows  \cite[Section 3, Proof of Theorem 2.1 (i)]{AC}. For reader's convenience, we sketch the proof here. 
Since $v_0$ is convex, for any unit vector $e$, the lateral derivatives
\begin{align*}
 &  \p_e^+ v_0(y) =: \lim_{t\searrow 0}  t^{-1} [v_0(y+te)-v_0(y)] \\
  &  \p_e^- v_0(y) =: \lim_{t\searrow 0} t^{-1} [v_0(y)-v_0(y-te)]
\end{align*}
exist. To prove that $v_0$ is $C^1$ at $y\in \partial V_0$, it suffices to prove that
\begin{equation}\label{lder}
 \p_e^+ v_0(y) = \p_e^- v_0(y)
 \end{equation}
for all unit vector
$e$.
By convexity of $v_0$, it suffices to prove  \eqref{lder} for $e= e'_k$ for all $k=1, 2, \cdots, n$,
where $e'_k$, $k=1,\cdots, n$, are any fixed $n$ linearly independent unit vectors. 
Since $V_0$ is convex, we can choose all of them point inside
$V_0,$ namely,  $te'_k\in V_0$ for $t>0$ small.
Assume to the contrary that $v_0$ is not $C^1$ at $y\in \partial V_0$. 
Suppose \eqref{lder} fails for some $e'_k.$  
Let us assume that  $x=0$, $v_0(0)=0$, $v_0\ge 0$, and $\p_{e'_k}^+v_0(0)>\p_{e'_k}^-v_0(0)=0$.

Now we consider a section $S^c_{h}[v_0](z)$, where $z=a'e_k'$ for some small constant $a'>0.$
Note that by John's lemma, there exits an ellipsoid $E$ with center $z$ such that $E\subset S^c_{h}[v_0](z)\subset C(n)E.$
Since $v_0$ is Lipschitz and $\p_{e'_k}^+v_0(0)>0$, we have that $C^{-1}\varepsilon\leq v_0(-\varepsilon e'_k)\leq C\varepsilon$ for any small positive $\varepsilon,$
where $C$ is a positive constant.
Since $\p_{e'_k}^+v_0(0)=0$, we have $v_0(Ma'e'_k)=o(a'),$ where $M=2C(n).$
 Hence, we can choose $a'>0$ small and  $\varepsilon=Cv_0(Ma'e'_k)$ and so that the following properties hold:\\
 1) $o(a')=v_0(Ma'e'_k)= C^{-1}\varepsilon \ll a'$, and\\
 2) $-\varepsilon e'_k$ is on the boundary of some section $S^c_{h}[v_0](z)$.\\
 The existence of such section $S^c_{h}[v_0](z)$ in 2) follows from the property that centered section, say $S^c_h[v_0](z)$, varies  continuously with respect to the height $h$,
see \cite[Lemma A.8]{CM}. 

Suppose  $S^c_{h}[v_0](z)=\{v_0<L\}$ for some linear function $L.$ 
Since $S^c_{h}[v_0](z)$ is balanced around $z=a'e_k'$ and $M=2C(n),$ we have that $Ma'e'_k\notin S^c_{h}[v_0](z).$
Hence $L(Ma'e'_k)\leq v_0(Ma'e'_k)\leq C^{-1}\varepsilon \leq  v_0(-\varepsilon e'_k)=L(-\varepsilon e'_k),$
where the second inequality follows from property 1) and the last equality follows from property 2).
Hence, $L$ is increasing in $-e'_k$ direction,
 which implies
 \begin{equation} \label {e1}
 (L-v_0)(0)\geq (L-v_0)(z)=h.
 \end{equation}
 On the other hand, since $\det\ D^2v_0$ is doubling for sections centered in $\overline{V}_0,$  we have that
 \begin{equation}
 \label{e2}
 (L-v_0)(0)\leq C(\frac{\varepsilon}{a'})^{\frac{1}{n}}h
 \end{equation}
contradicting to \eqref{e1} since $a' \gg \varepsilon.$ Hence $v_0$ must be differentiable at $y.$

By the strict convexity of $v_0$ in $\overline{V_0},$ we have that $|Dv_0(y)|\geq 2r>0$ for all $y\in \overline{V_0}\backslash B_1(0).$
Indeed, by convexity of $v_0,$ we have that $|Dv_0(y)| \geq \inf_{\partial B_1(0)\cap \overline{V_0}}v_0(y)$ for all $y\in \overline{V_0}\backslash B_1(0).$
Hence, 
\begin{equation}\label{Dvmapto1}
B_r(0)\cap \overline{U_0}\subset Dv_0(B_1(0)\cap \overline{V_0}).
\end{equation}
Now, $Du_0$ is  the optimal map from $Dv_0(B_1(0)\cap \overline{V_0})$ with density $1$ to $B_1(0)\cap \overline{V_0}$
with density $c_0.$ Since the densities are bounded from below and above, and the target domain is convex, by Caffarelli's regularity theory we have that $u_0$ is strictly convex and $C^1$ in  $B_r(0)\cap U_0.$ Note that this is an interior regularity property. It  follows that
\begin{equation}\label{mapinside1}
Du_0(B_r(0)\cap U_0)\subset B_1(0)\cap V_0,
\end{equation}
namely, the interior points in $B_r(0)\cap U_0$ will be mapped to the interior points of $V_0.$

First, we show that $u_0$ is strictly convex in  $B_r(0)\cap \overline{U_0}.$ Suppose not, then there exist
points $x, \tilde{x}\in  B_r(0)\cap \overline{U_0}$ such that $u_0$ is affine along the segment $x\tilde x.$ Let $p$ be the mid point of the segment $x\tilde x.$ Let $q\in \overline{V_0}$ such that $Dv_0(q)=p.$ Since $u_0$ is the Legendre transform of $v_0,$ it implies that the segment $x\tilde x$ is contained in the subdifferential of $v_0$ at $q,$ contradicting to the property that all the points in
 $\overline{V_0}$ are differentiable points of  $v_0.$

Now, we show that $u_0$ is $C^1$ in $B_r(0)\cap \overline{U_0}.$ We already have the interior regularity. For any 
$x\in \partial \overline{U_0}\cap B_r(0),$ If $u_0$ is not $C^1$ at $x,$ then there exists two sequence $U_0 \ni x_k, \tilde x_k
\rightarrow x$ such that $V_0\ni Du_0(x_k), Du_0(\tilde x_k)$ converges to two different points $y, \tilde y\in \overline{V_0}\cap B_1(0)$ respectively. It implies that $Dv_0(y)=Dv_0(\tilde{y}),$ by convexity of $v_0$ we have that $v_0$ is affine along the segment
$y\tilde y,$ contradicting to the strict convexity of $v_0$ in $B_{1}(0)\cap \overline{V_0}.$  Hence $u_0$ is $C^1$ in $B_r(0)\cap \overline{U_0}.$  
\end{proof}

\begin{remark}\label{U01}
Since $\det\, D^2 v_0=c_0\chi_{_{V_0}}\ \text{in } \R^n$ and $V_0$ is convex, we have that 
$|\partial v_0(\mathbb{R}^n\backslash V_0)|=0.$ It implies that 
for almost everywhere $x\in U_0,$ we can find $y\in V_0,$ such that $Dv_0(y)=x.$ 
 Note also that by continuity of $Dv_0$ in $\overline{V_0}$ we have $\overline{U_0}=Dv_0(\overline{V_0}).$
 Suppose for a subsequence $h_k\rightarrow 0,$ we have that
  $v_k:=v_{h_k}$ converges to $v_0$ locally uniformly in $\mathbb{R}^n.$  In particular, $v_k\rightarrow v_0$ uniformly in 
  $B_r(0)$ for any $r>0$ fixed. Now, we claim that
  $Dv_k$ converges to $Dv_0$ uniformly in $B_{\frac{r}{2}}(0)\cap \overline{V}_0.$
 Indeed, suppose $Dv_k$ does not converge to $Dv_0$ uniformly in $B_{\frac{r}{2}}(0)\cap \overline{V_0}.$ Then, there exists a positive constant $\epsilon>0$ and a sequence of points $y_k\in  B_{\frac{r}{2}}(0)\cap \overline{V_0},$ such that 
 \begin{equation}\label{uec112}
 |Dv_k(y_k)-Dv_0(y_k)|\geq \epsilon.
 \end{equation}
 By \eqref{aac2}, we have that $Dv_k$ is uniformly bounded in $B_r(0)$ for all $k.$ Passing to a subsequence, we may assume
 \begin{equation}\label{uec113}
Dv_k(y_k)\rightarrow x\in  \overline{U_0}
\end{equation}
 and $y_k\rightarrow y\in B_\delta(0)\cap \overline{V_0}.$  By continuity of $Dv_0$ we have that 
$Dv_0(y_k)$ converges to $Dv_0(y).$ 
By \eqref{uec112} we have that 
\begin{equation}\label{uec114}
|x-Dv_0(y)|\geq \epsilon.
\end{equation}
By convexity of $v_k,$ we have that
$v_k(z)\geq v_k(y_k)+Dv_k(y_k)\cdot (z-y_k).$ Since  $v_k\rightarrow v_0$ uniformly in $B_{r}(0),$ by \eqref{uec113}, passing to limit we have $v_0(z)\geq v_0(y)+x\cdot (z-y),$ which implies that $Dv_0(y)=x$ contradicting to \eqref{uec114}.
Hence  $Dv_k$ converges to $Dv_0$ uniformly in $B_{\frac{r}{2}}(0)\cap \overline{V_0}.$

Since $u_0$ is strictly convex and $C^{1}$ in $B_{r}(0)\cap \overline{U_0},$ similar to \eqref{Dvmapto1} we have that
 \begin{equation}\label{Dumapto}
 B_{r'}(0)\cap \overline{V_0}\subset Du_0(B_{r}(0)\cap \overline{U_0})
 \end{equation}
 for some positive constant $r'.$
 Then, for any $y\in \partial V_0\cap B_{r'}(0),$ we claim that $Dv_0(y)\subset \partial U_0\cap B_r(0).$ 
 Suppose not, then $x:=Dv_0(y)\subset U_0,$ which implies that $y=Du_0(x)$ is in the interior of $V_0,$ contracting to the assumption that  $y\in \partial \overline{V_0}\cap B_{r'}.$  Therefore
\begin{equation}\label{Dvmap}
Dv_0(\partial V_0\cap B_{r'}(0))\subset  \partial U_0\cap B_r(0).
\end{equation}
 
 \end{remark}

Similarly to \eqref{faxiang1}, by straightforward computation we see that
\begin{equation}\label{faxiang2}
\bar{e}_h:=\frac{(T_h^{*t})^{-1}e_n}{|(T_h^{*t})^{-1}e_n|}
\end{equation}
 is the unit inner norm of $U_h=T_h^*(U)$ at $0.$
By the definition of $T_h^*$,  we have $(T_h^{*t})^{-1}=hT_h.$
By passing to a subsequence, we may assume $\bar{e}_h\rightarrow e_0$ as $h\rightarrow 0$.   
Then we have the following nice property.

\begin{lemma}\label{nice1}
The hyperplane
$e_0^{\perp}:=\{x\in \mathbb{R}^n: x\cdot e_0=0\}$ is the supporting hyperplane of $U_0$ at $0$.
\end{lemma}

\begin{proof}
Let $y\in T_h(S_h[v])$. Then $T_h^{-1}(y)\in S_h[v]$, and 
by Corollary \ref{coroabove}, we have
\begin{equation}\label{ppn1}
	Dv(T_h^{-1}y)\cdot e_n\geq -Ch^{1-\epsilon}.
\end{equation}
By Remark \ref{shapelev} and \eqref{ddes}, there exists a constant $c$ independent of $h$ such that
for any $x\in B_c(0)\cap T_h^*(U),$ there exists $y\in  T_h(S_h[v])$ such that $x=Dv_h(y)$. %}
Then from \eqref{Dvhy}, 
\begin{equation}\label{ppn2}
	x = \frac{1}{h}(T_h^t)^{-1}Dv(T_h^{-1}y).
\end{equation}
Combining \eqref{ppn1}, \eqref{ppn2} together with \eqref{faxiang2}, we obtain
\begin{equation}\label{ppn3}
	x\cdot \bar{e}_h\geq -\frac{Ch^{1-\epsilon}}{|hT_he_n|}.
\end{equation}  
By the arbitrariness of $x$,
it suffices to show that the right hand side of inequality \eqref{ppn3} converges to $0$, as $h\to0$.  
Recall that $T_h=T_2\circ T_1$. From \eqref{T1}, we have $T_1e_n = h^{-\frac13}e_n$. 
From \eqref{ag2}, we also have $|T_he_n| \gtrsim h^{-\frac13+\epsilon}$. 
Therefore, by \eqref{ppn3} we infer that 
\begin{equation}\label{U02}
x\cdot \bar{e}_h\geq -Ch^{\frac13-2\epsilon} \rightarrow 0,
\end{equation}
 as $h\rightarrow 0$.

Now, for almost everywhere $x\in U_0\cap B_c(0),$ by Remark \ref{U01}, we can find $y\in V_0$ such that 
$x=Dv_0(y).$ Since $V_h\cap B_c(0)$ converges to $V_0\cap B_c(0)$ in Hausdorff distance, we have that 
$y\in V_h\cap B_c(0),$ provided $h$ is sufficiently small. Hence by \eqref{U02}, we have that $Dv_h(y)\cdot \bar{e}_h\geq0.$
By Remark \ref{U01} we have that, up to a subsequence, $Dv_h(y)\rightarrow Dv_0(y)=x.$
Hence, passing to limit, we have that $x\cdot e_0\geq 0.$
By continuity, we have that $x\cdot e_0\geq0$ for all $x\in U_0\cap B_c(0).$ 
Hence, by the convexity of $U_0$ in Lemma \ref{split2}, we reach the conclusion of  Lemma \ref{nice1}.
\end{proof}

From the definitions \eqref{faxiang1} and \eqref{faxiang2}, one can verify that $\bar{e}_h \perp \tilde{e}_h$ for any $h>0$. 
Passing to the limit we have 
\begin{equation}\label{cuizhi}
e_0\perp e_0^*, 
\end{equation}
where $e_0^*$ is the unit inner normals of $\partial V_0$ at $0$, and $e_0$ is the same as that in Lemma \ref{nice1}.
We remark that despite  the decompositions $U_0=\omega_0\times H_0$ in Lemma \ref{split2} and $V_0=\omega_0^*\times H^*_0$ in Lemma \ref{split1}, the $n-2$ dimensional subspaces $H_0, H_0^*$ may differ from each other, see Fig. \ref{f2H}. 
The next lemma says that we can align them by an affine transformation.

\renewcommand{\figurename}{Fig.}
\renewcommand{\captionlabeldelim}{}
\begin{figure}[h]
	\centering
	\includegraphics[width=0.75\textwidth]{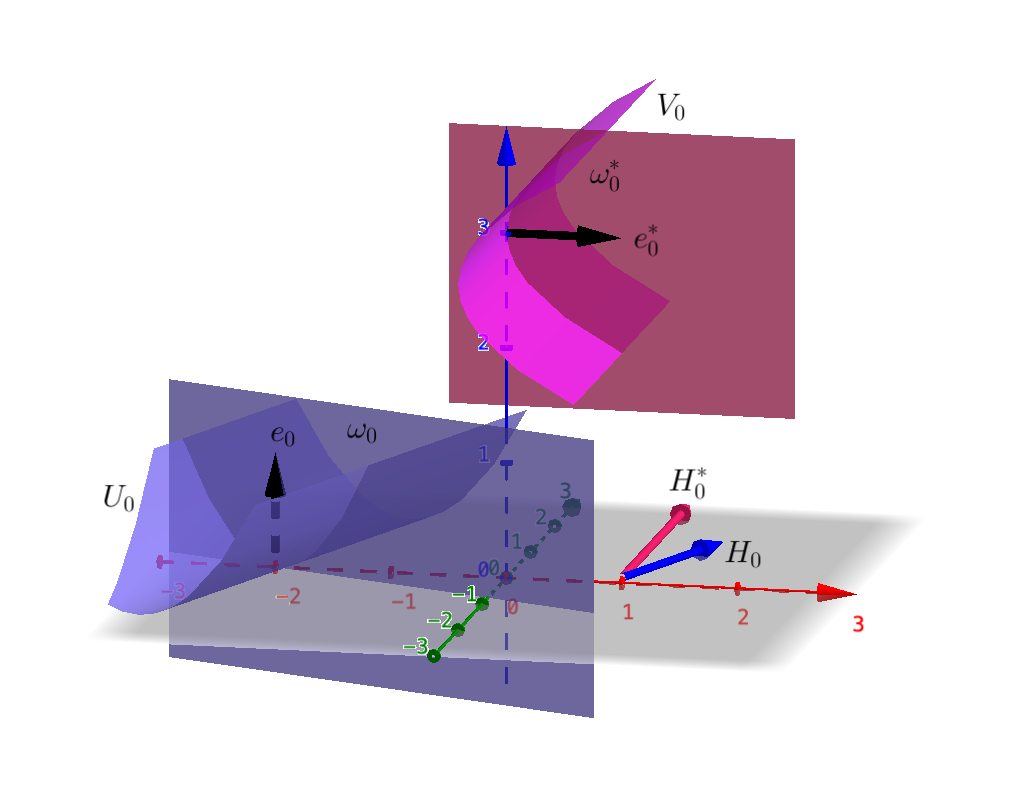}
	\caption{}
    \label{f2H}
\end{figure}

\begin{lemma}\label{hkey6}
There exists an affine transformation $A$ with $\det\, A=1$ such that $AH_0=(A^t)^{-1}H_0^*$.  
Hence,  by the affine transform $A$ and another coordinate change, 
we can make both $A(U_0)$ and $(A^t)^{-1}(V_0)$ flat in the $e_2,\cdots, e_{n-1}$ directions.
\end{lemma}

\begin{proof}
We first \emph{claim} that for any unit vector $e\in H_0$, $e$ cannot be parallel to $e_0^*$. 
For if not, then $e_0^*\in H_0$.  
Let $u_0$ be the Legendre transform of $v_0,$
namely, 
	\beq\label{v0dual}
		u_0(x)=\sup_{y\in \mathbb{R}^n} \left\{ x\cdot y-v_0(y) \right\} \quad \text{for}\ x\in \overline{U_0}.
	\eeq

By Lemma \ref{u0v0}, we have that 
 $u_0$ is strictly convex and $C^{1}$ in $B_{r_0}(0)\cap \overline{U_0}$ for some $r_0>0.$
  Note that since $v_0(0)=0$, {$v_0\geq 0$}, we also have $u_0(0)=0$, $u_0\geq 0$.
 On the other hand, by \eqref{mapinside1}
  $Du_0(U_0\cap B_r(0)) \subset \{y: y\cdot e_0^*\geq 0\}$,  we have
 $Du_0\cdot e_0^*\geq 0$ in $U_0\cap B_r(0),$ namely $u_0$ is monotone increasing in the $e_0^*$ direction.  
 It follows that $u_0(-te_0^*)\leq 0$ for $t>0$ small, and thus $u_0(-te_0^*)=0$ for $t>0$ small, which contradicts to the strict convexity of $u_0$.
The claim is proved.

Now, for a fixed unit vector $e\in H_0$,  by the above claim we can find a vector $\tilde{e}\in H_0^*$ such that $e$ is not orthogonal to $\tilde{e}$.   Hence there exists an affine transformation $A_1$ with $\det A_1=1$ such that $A_1e$ is parallel to $(A_1^t)^{-1}\tilde{e}$ (see \eqref{matA} and \cite[(4.7)]{CW1}).   
The unit inner normals of $A_1(U_0)$ and $(A_1^t)^{-1}V_0$ at $0$ are still orthogonal to each other. 
Denote $\bar{e}_2=\frac{A_1e}{|A_1e|}$.   
 Then, $A_1(U_0)= \omega_1\times H_1\times \text{span}\{\bar{e}_2\}$,  where $\omega_1$ is a two dimensional convex subset and $H_1$ is an $n-3$ dimensional subspace in $\mathbb{R}^n$. 
 Similarly,  $(A_1^t)^{-1}V_0 = \omega^*_1\times H^*_1\times \text{span}\{\bar{e}_2\}$, where $\omega^*_1$ is a two dimensional convex subset and $H^*_1$ is an $n-3$ dimensional subspace in $\mathbb{R}^n$.

Then we restrict ourself to the sets $\omega_1\times H_1$ and $\omega^*_1\times H^*_1$ in the $(n-1)$-space $(\bar{e}_2)^\bot$.
Similarly as above, we can find unit vectors $e'\in H_1, \tilde{e}' \in H^*_1$ and an affine transform $A_2$ such that $A_2e'$ is parallel to $(A_2^t)^{-1}\tilde{e}'$,  and $A_2\bar{e}_2=\bar{e}_2$ remains unchanged. 
Let $\bar{e}_3=\frac{A_2e'}{|A_2e'|}$.   Repeating this process, after a sequence of affine transformations $A_i$, $i=1,\cdots, n-2$,  we have $AH_0=(A^t)^{-1}H_0^*$,  where $A=A_{n-2}\cdots A_1.$ 
\end{proof}

\begin{proof}[Proof of Proposition \ref{blowuppic} when $n\geq 3$]
By Lemma \ref{split1}, Lemma \ref{split2}, Lemma \ref{hkey6}
and the relation \eqref{cuizhi}, up to an affine transformation and a change of coordinates we may assume $V_0=\omega_0^*\times H$ and $U_0=\omega_0\times H,$ where $H=\text{span}\{e_2,\cdots,e_{n-1}\},$ and 
\begin{equation}\label{funcv0}
\omega_0^*=\{(y_1, y_n): y_1\geq \rho_0^*(y_n)\}
\end{equation}
  for some smooth convex function $\rho_0^*$ satisfying $\rho_0^*\geq 0$, $\rho_0^*(0)= 0.$  
  Meanwhile, $\omega_0$ is a convex set in $\text{span}\{e_1, e_n\}$ with 
$0\in \partial \omega_0$ and $\omega_0\subset \{(x_1,x_n) : x_n\geq 0\}.$ 
However, $\partial\omega_0$ may not be a graph of a function of $x_1$, for example see Fig. \ref{figsliding}.
To make $\partial\omega_0$ locally a graph, we can apply a sliding transform as follows.

\renewcommand{\figurename}{Fig.}
\renewcommand{\captionlabeldelim}{}
\begin{figure}[h]
	\centering
	\includegraphics[width=0.7\textwidth]{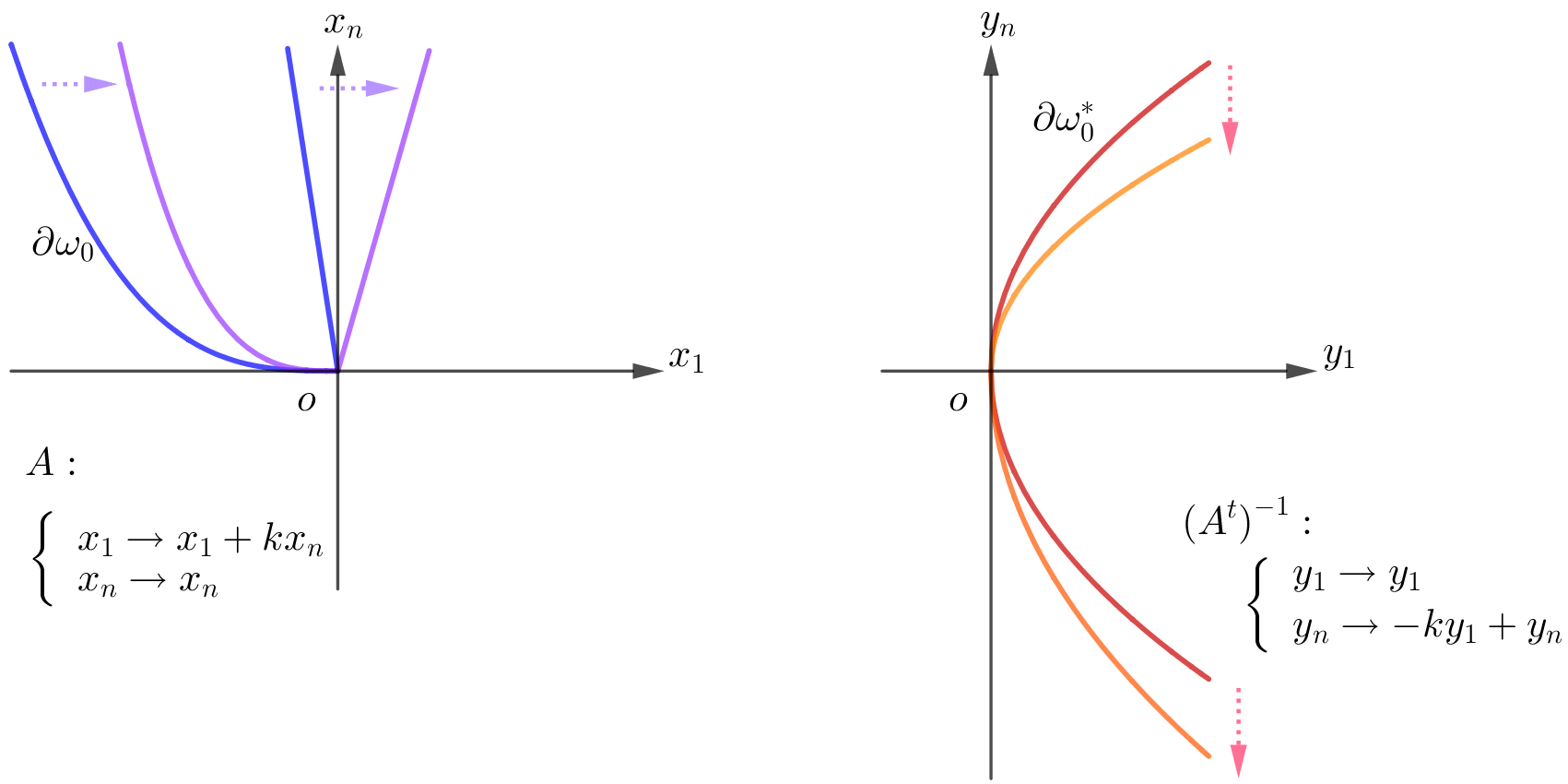}
	\caption{}
    \label{figsliding}
\end{figure}

Let $A$ be an affine transform such that
\begin{equation*}
	A: \left\{ \begin{array}{rll}
		x_1 &\to\ \  x_1 + k x_n & \text{for a constant $k\in\R$} \\
		x_i &\to\ \  x_i\quad & \text{for } i=2,\cdots,n.
	\end{array}
	\right.
\end{equation*}
Note that $A$ makes $U_0$ to slide along the $x_1$ direction, and at the same time $(A^t)^{-1}$ makes $V_0$ slide along the $y_n$ direction, while the $(n-2)$-space $H$ remains invariant. 
Hence, by choosing a proper constant $k\in\R$, we may assume that $\omega_0=\{(x_1, x_n): x_n\geq \rho_0(x_1)\}$ for a convex function $\rho_0.$ Note that since $\rho_0^*$ is smooth, after the corresponding affine transform $(A^t)^{-1},$ $\omega_0^*$ still satisfies \eqref{funcv0} but with a different smooth function $\rho_0^*.$ 
\end{proof}

\begin{remark}\label{lastuse}
By the proof of Lemma \ref{split1}, after the above transform, we have that $\partial V_0=\{y_1>P_0(y)\}$ for some nonnegative quadratic polynomial. Since $\partial V_0$ is flat in $e_2, \cdots, e_{n-1}$ directions, we have that $P_0$ depends only on $y_1, y_n.$
Since $P_0$ is nonnegative, we may denote it as 
$P_0(y)=ay_n^2+2by_ny_1+cy_1^2$ with $b^2\leq ac.$
 We claim that $b=c=0$ and $a>0.$
In fact, if $c>0,$ then $P_0(te_1)>y_1$ for $t$ large, which implies $te_1\notin V_0.$ On the other hand, by
\eqref{epigraph1} we have that $te_1\in V_0$ for any $t$ large, which is a contradiction. Hence, $c=0,$ which also implies 
$b=0.$ 
If $a=0,$ then $\partial V_0$ is flat, which implies that $te_n\in \partial V_0$ for $t<0.$ Since 
$Dv_0(\mathbb{R}^n)\subset \{x: x_n\geq 0\},$ which implies that $v_0$ is increasing in $e_n$ direction. Since $v_0\geq 0$ 
and $v_0(0)=0,$ it implies that $v_0(te_n)=0$ for all $t<0,$ contradicting to the strict convexity of $v_0$ in $B_1(0)\cap \overline{V_0}.$  Therefore, we may denote $\partial V_0:=\{y: y_1=\rho^*_0(y_n)\}$ with $\rho^*_0(y_n)=ay_n^2$ for some positive constant $a.$
\end{remark}

\section{Proof of obliqueness}\label{Sob}
In this section we will use the limit profile obtained in Section \ref{Sbu} to prove the following obliqueness estimate.

\begin{proposition}\label{oblique1}
Assume that  $\overline\Om, \overline{\Om^*}\subset\mathbb{R}^n$ are disjoint, uniformly convex domains with $C^2$ boundaries,
and that the densities $f\in C(\bom)$, $g\in C(\overline{\Omega^*})$ are positive, continuous functions.
 Then for any $x_0\in\mathcal{F}$ and $y_0=Du(x_0)$, we have
\beq \label{obq-n}
\nu_{_U} (x_0)\cdot \nu_{_V} (y_0)>0,
\eeq
where $\nu_{_U}(x_0)$ is the unit inner normal of $U$ at $x_0$, and $\nu_{_V}(y_0)$ is the unit inner normal of $V$ at $y_0$. 
\end{proposition}

\subsection{Obliqueness in dimension two}\label{S3}

In the argument below, we will adopt some techniques from \cite{CLW1}.
Recall that if the obliqueness fails at $x_0,$ then Proposition \ref{blowuppic} holds.
Let $v_0, U_0, V_0$ be as in Proposition \ref{blowuppic}:
	\begin{equation}\label{ang321}
	 V_0 =\big\{(y_1,y_2)\in\R^2 \,:\, y_1> \rho_0^*(y_2) \big\}, 
	 \end{equation}
	 where $\rho_0^*(t)=at^2$ for some constant $a>0$, and  
	\begin{equation}\label{ang322}
	 U_0 = \big\{(x_1,x_2)\in\R^2 \,:\, x_2> \rho_0(x_1) \big\}, 
	 \end{equation}
	 where $\rho_0$ is a convex function satisfying $0\leq \rho_0(t)\leq Ct^2$ for a constant $C>0$,
	and $\rho_0(t)=\frac{1}{2r}t^2$ for $t<0$,
	where $r>0$ is a constant. 
By subtracting a constant we may assume that
$v_0(0)=0.$

 Recall that by \eqref{Dvmap}, we have that $Dv_0(\partial V_0\cap B_{r'}(0))\subset \partial U_0.$
Then
by the  monotonicity of convex function $v_0$  we  have
\begin{equation}\label{bdrypt}
Dv_0(y)\in \partial U_0\cap \{x_1<0\}\quad \ \forall \, y\in \partial V_0\cap \{y_2>0\}\cap B_{r'}(0). 
\end{equation}
Indeed, given any $y\in  \partial V_0\cap \{y_2>0\}\cap B_{r'}(0),$ suppose $x=Dv_0(y)\in  \partial U_0\cap \{x_1\geq 0\}.$ Let $\{\tilde x\in \mathbb{R}^2: (\tilde{x}-x)\cdot e= 0\}$ be a supporting line of the convex set $U_0$ at $x\in \partial U_0,$ for some unit vector $e.$ Replacing $e$ by $-e$ if necessary, we may also assume that $U_0\subset \{\tilde x\in \mathbb{R}^2: (\tilde{x}-x)\cdot e> 0\}.$ Note that $e$ can be chosen as the unit inner normal vector of $\partial U_0$ at $x$ when $\partial U_0$ is $C^1$ at $x.$ 
Then, by \eqref{ang321}, \eqref{ang322} and using the assumptions that $y_2>0$ and $x_1\geq 0,$
 we have that the angle between $e$ and the unit inner normal of $V_0$ at $y$ is strictly large than $\frac{\pi}{2}.$ Hence,
by the smoothness of $\partial V_0$ we have that  $-e$ points inside $V_0,$ namely $y-te\in V_0$ for $t>0$ small.
Denote $x_t:=Dv_0(y-te)\in U_0\subset \{\tilde x\in \mathbb{R}^2: (\tilde{x}-x)\cdot e> 0\}.$ Then,
$$\left(Dv_0(y-te)-Dv_0(y)\right)\cdot (y-te-y)=(x_t-x)\cdot (-e)<0,$$ contradicting to the monotonicity of $Dv_0.$

By ($i$) of Proposition \ref{blowuppic}, we have that both $\partial U_0\cap \{x_1<0\}$ and $\partial V_0\cap \{y_2>0\}$ are smooth and uniformly convex.
 Hence by the localised estimates of Caffarelli \cite{C96}, 
$v_0$ is smooth up to the boundary in $V_0\cap\{y_2>0\}$.
Let $p, \xi$ be the points on $\partial S_h[v_0]$ such that 
\begin{align}\label{defp}
   p_2 = p\cdot e_2 &=\sup\{y\cdot e_2 \,:\, y\in S_h[v_0]\},\\
   \xi_2 = \xi\cdot e_2 &=\inf\{y\cdot e_2 \,:\, y\in S_h[v_0]\} . \nonumber
\end{align}
 From \eqref{bdrypt}, one sees that $p$ is in the interior of $V_0$.
Hence
$\{x\in\mathbb{R}^2:x_2=p_2\}$ is the tangent line of $\{v_0<h\}$ at $p.$
We \emph{claim} that 
\begin{equation}\label{balan1}
p_2\geq C|\xi_2|
\end{equation}
for a constant $C>0$ independent of $h$. 
The proof of \eqref{balan1} is similar to that of \cite[Lemma 4.1]{CLW1}. 
For the reader's convenience, we include a brief proof below.

Suppose \eqref{balan1} is not true, then there exists a sequence $h\rightarrow 0$ such that
\begin{equation}\label{balan2}
\frac{p_2}{\xi_2}\rightarrow 0\quad \text{as }\ h\rightarrow 0.
\end{equation}
Let $T_h$ be a linear transformation such that $T_h(S_h[v_0])\sim B_1$, and let 
$v_{0h}(\cdot)=\frac{1}{h}v_0(T_h^{-1}(\cdot)).$ 
 Similarly to $v_h$,  $v_{0h}$ sub-converges to a convex function $\bar{v}$ locally uniformly as $h\to0$. 
Denote $H_{1h}=T_h\left(\{x_2=p_2\}\right)$ and $H_{2h}=T_h\left(\{x_2=\xi_2\}\right).$ 
By \eqref{balan2} we have 
$$\frac{\text{dist}(0, H_{1h})}{\text{dist}(0, H_{2h})} \rightarrow 0\quad \text{as }\ h\rightarrow 0.$$
Along a subsequence, $H_{1h}$ and $H_{2h}$ converge to straight lines $H_{1}$ and $H_2$, respectively. 
Since $T_h(S_h[v_0])$ has a good shape, we have $\text{dist}(H_{1h}, H_{2h})\approx 1$.
Then the limit $H_1$ passes $0$.
On the other hand, since $H_{1h}$ is a tangent line of $\{v_{0h}=1\}$, we have $v_{0h}\geq 1$ on one side of $H_{1h}$. 
Passing to the limit, we have $\bar{v}\geq 1$ on one side of $H_1$, which however contradicts 
to the facts that $0\in H_1$, $\bar{v}(0)=0$ and $\bar{v}$ is continuous. 
Hence claim \eqref{balan1} is proved.

Recall that  $Dv_0(V_0)\subset \{x_2\geq 0\}$.
Hence $v_0$ is increasing in $y_2$, and $\sup\{y\cdot e_1 : y\in S_h[v_0]\}$ is achieved  at $\xi$,
the point  defined in \eqref{defp}.
That is 
	$$\xi_1=\sup\{y\cdot e_1 \,:\, y\in S_h[v_0]\}.$$ 
From \eqref{nm1}, \eqref{balan1} and noting that $\xi\in\partial V_0=\{y_1=ay_2^2\}$, 
we have the estimates 
\begin{align*} 
 	h \approx |S_h[v_0]| & \leq Cp_2\xi_1  \\
	 		 & \leq Cp_2\xi_2^2    \leq Cp_2^3 .
\end{align*}
It implies that $p_2 \gtrsim h^{1/3}$.   Therefore, as $p\in\partial S_h[v_0]$, we obtain 
  $$v_0(p)=h\leq Cp_2^3.$$

Denote $\tilde{p}=(p_1,\frac{1}{2}p_2).$
Since $v_0$ is increasing in the $e_2$ direction, we have $v_0(\tilde p) \leq v_0(p)$ and
\begin{equation}\label{gg1}
v_0(\tilde{p})\leq h\leq Cp_2^3.
\end{equation}
By the convexity of $v_0$,  
\begin{equation}\label{gg2}
	\partial_2 v_0(\tilde{p})\leq \frac{v_0(p)-v_0(\tilde{p})}{\frac{1}{2}p_2}\leq C\frac{h}{p_2}\leq Cp_2^2,
\end{equation}
where $\partial_2 v_0=\partial_{y_2} v_0 \geq 0$. 

Introduce the function
	\begin{equation}\label{aux}
		w(y) := \partial_2v_0(y) + v_0(y)- y_2 \partial_2 v_0(y) \quad\mbox{ in }  V_0. 
	\end{equation}
By equation \eqref{maeq1},  $w$ satisfies 
$$\sum_{i,j=1}^2 V^{ij}w_{ij}=0\ \ \ \text{ in }  V_0, $$
where $\{V^{ij}\}$ is the cofactor matrix of $\{D^2v_0\}$.

\begin{lemma}\label{decay001}
Let
	\begin{equation}\label{aux2}
		\underline{w}(t):= \inf \big\{w(y_1, t)  \,:\,  y_1 > \rho_0^*(t)\big\},\quad 0<t<1.
	\end{equation}
Then for $t>0$ small, say $t\in (0, \delta_0)$, we have
\beq
0 \leq \underline{w}(t)\leq Ct^2.
\eeq  
\end{lemma}

\begin{proof}
Observe that $w=(1-y_2) \partial_2v_0 + v_0 \geq 0$ for $y_2<1.$ 
Let $p$ be the point defined in \eqref{defp}. 
By \eqref{gg1} and \eqref{gg2}, we have
$$\underline{w}(\frac{1}{2}p_2)\leq w(\tilde{p})\leq Cp_2^3+Cp_2^2\leq 2Cp_2^2,$$
for $p_2>0$ small. By sending $h\rightarrow 0,$ $p_2$ will take all arbitrarily small positive values, hence the desired estimate follows. 
\end{proof}

\begin{lemma}\label{min1}
For $t>0$ small, the minimum of $w(\cdot, t)$ in \eqref{aux2} is attained in the interior of $V_0.$
\end{lemma}

\begin{proof}
Recall that $v_0$ is smooth up to the boundary in $V_0\cap \{y_2>0\},$ and
$$  \partial V_0\cap\{y_2>0\}= \left\{(y_1,y_2) \,:\, y_1=\rho_0^*(y_2)=ay_2^2,\ y_2>0 \right\}.$$  
For $y=(y_1, y_2)\in   \partial V_0\cap\{y_2>0\}$,
by \eqref{bdrypt} and \eqref{limU0}, we have
 $$Dv_0(y)\in \Big\{(x_1, x_2) \,:\, x_2= \rho_0(x_1) =\frac{1}{2r}x_1^2,\ x_1<0 \Big\}.$$
Hence  
$$\partial_2v_0(\rho_0^*(t), t)=\rho_0(\partial_1v_0(\rho_0^*(t),t))\quad \ \text{ for }\ t>0.$$ 
Differentiating the above equation in $t$, we obtain
$$\partial_{21} v_0 \cdot\left((\rho_0^*)'-\rho_0'\right)=\rho'_0(\rho_0^*)' \partial_{11}v_0 - \partial_{22}v_0.$$
Since $(\rho_0^*(t))' > 0$,  $\rho_0' (\partial_1v_0(\rho_0^*(t),t))<0$ for $t>0$,  and $\partial_{11}v_0>0$, $\partial_{22}v_0>0$,
from the above formula it follows that $\partial_{21}v_0<0$ for $t>0$. 
Hence for $y=(\rho_0^*(y_2), y_2)\in \p V_0$ with $0<y_2<1$, we obtain
$$\partial_{1} w(y)=(1-y_2)\partial_{21}v_0+\partial_1v_0<0.$$

On the other hand,
recall that  $\partial_2 v_0 \ge 0$ and $v_0 \ge 0$. 
For any small $\delta>0$, by the strict convexity of $v_0$ in $V_0$,   
there exists $\epsilon>0$ such that  
$$w(y)=(1-y_2)\partial_2 v_0+v_0 \geq \epsilon\ \ \ \text{for}\ y\in B_1(0)\backslash B_{\delta}.$$
 By the assumption in the beginning of Section 5, we have that $v(0)=0, v\geq 0,$ which implies that
$v_h(0)=0, v_h\geq 0,$ passing to limit $h\rightarrow 0,$ we have $v_0(0)=0, v_0\geq 0.$ Hence $Dv_0(0)=0.$ Note
that by Lemma \ref{u0v0}, we have that $v_0,$ as a convex function defined on $\mathbb{R}^n,$ is differentiable at $0.$
By the definition of $w,$ we have $w(0) = \partial_2v_0(0) + v_0(0)- y_2 \partial_2 v_0(0)=0.$
Hence, by the $C^{1}$ regularity of $v_0$,
there exists $\delta_0>0$,  such that $w(\cdot, t)$ attains its minimum in the interior of $V_0$ 
for any $0<t<\delta_0.$
\end{proof}

\begin{lemma}\label{concave}
For $t\in (0, \delta_0)$,  the function $\underline{w}$ defined in \eqref{aux2} is concave,
\end{lemma}

\begin{proof}
If $\underline{w}$ is not concave, 
there exist constants $0<r_1<r_2<\delta_0$  and
 an affine function $L(t)$ such that 
$\underline{w}(r_i)= L(r_i)$ for $i=1,2$,  and the set 
  $\{t\in (r_1, r_2) : w(t)< L(t)\}\ne\emptyset$.
Extend $L$ to $\R^2$ such that $L(s, t)=L(t)$, namely, $L$ is independent of $s$.
Denote 
$$D_\eps =\{y\in V_0 \ :\ y_2\in(r_1, r_2),\text{ and } w(y)< L(y)-\eps\} . $$
By our definition of $\underline w$ and Lemma \ref{min1}, 
we can choose $\eps>0$ such that 
\beq\label{Deps}
\emptyset\ne D_\eps \Subset V_0.
\eeq
Indeed, by our choice of $L$, $D_{\eps\,|\,\eps=0}\ne\emptyset$.
Let $\eps_0=\sup\{\eps \,:\, D_{\eps}\ne\emptyset\}$.
Then \eqref{Deps} holds for $\eps<\eps_0$ and sufficiently close to $\eps_0$.

Recall that $\sum_{i,j}V^{ij}w_{ij}=0$ in $V_0$.
The strong maximum principle implies that $w= L$ in $D_\eps$.
However, $w<L$ in $D_\eps$ by our definition of $D_\eps$.
We reach a contradiction.
\end{proof}

\begin{proof}[Proof of Proposition \ref{oblique1} in 2d]
Suppose the obliqueness fails.
By Lemma \ref{decay001} and Lemma \ref{concave}, 
$\underline{w}(t)$ is concave in $(0, \delta_0)$ and satisfies $0\leq \underline{w}(t)\leq Ct^2$.   
Note that $\underline{w}(t)\to 0$ as $t\to 0$.
Hence, we must have $\underline{w}(t)\equiv 0$ for $t\in (0, \delta_0)$.
On the other hand, for a fixed $t_0\in(0,\delta_0)$,
by the strict convexity of $v_0$, we have 
$w(y_1,t_0)=(1-t_0)\partial_2 v_0 + v_0 >\epsilon_0$ for any $(y_1,t_0)\in \overline{V_0}$, where the constant $\epsilon_0>0$ is independent of $y_1$. Therefore, $\underline{w}(t_0)\geq \epsilon_0>0$. 
We reach a contradiction. 
\end{proof}

\subsection{Obliqueness in higher dimensions}\label{S4}
Suppose the obliqueness fails at $x_0,$ let $v_0, U_0, V_0$ be as in Proposition \ref{blowuppic}.
 When $n\geq 3,$ since $U_0$ is not $C^{1,1}$ in general, we do not have the $C^2$ regularity of $v_0$ up to $\partial V_0\cap\{y_n>0\}$ as that in dimension 2. Hence,
in the proof we need to use the approximation technique developed in \cite[Section 5.2]{CLW1}.

\begin{proof}[Proof of Proposition \ref{oblique1} for general dimensions] \ 

\noindent {\it Step 1.} 
By Proposition \ref{blowuppic},  we may assume that
\begin{align}
\partial U_0 &=\{x : x_n=\rho_0(x_1)\}; \label{eqlem} \\
\partial V_0 &=\{y : y_1=\rho_0^*(y_n)\} \nonumber
\end{align} 
for a convex function $\rho_0$ satisfying $\rho_0(0)=0$, $\rho_0\geq0$; 
and for a smooth convex function $\rho_0^*$ satisfying $\rho_0^*(0)=0$, 
$\rho_0^*\geq 0$. 

We remark that the smoothness of $\rho_0^*$ follows from Lemma \ref{split1},  
but the function $\rho_0$ may not be smooth. 
Unlike \eqref{limU0} in dimension two, {the lack of smoothness of $\rho_0$}
prevents us from obtaining further regularity of $v_0$. By \eqref{maeq111},  $v_0$ satisfies
\begin{align}\label{maeq666}
	& \det\, D^2v_0=c_0\chi_{_{V_0}}\ \ \text{in}\ \R^n,\\
	& Dv_0(V_0)=U_0 \nonumber
\end{align}
for a constant $c_0>0$. 
To overcome this obstacle, in the following we first show that $v_0$ can be approximated by a sequence of smooth functions.

Fix a small $r_0>0$, let $\tilde V_0$ be interior of the convex hull of $\Sigma:=Du_0(B_{r_0}\cap \overline{U_0})$, where $u_0$ is as in \eqref{v0dual}.
By the proof of Lemma \ref{u0v0}, in particular \eqref{mapinside1} and \eqref{Dumapto}, we have that
 \begin{equation}\label{inclus12}
 B_{\delta}(0)\cap \overline{V_0}\subset \Sigma \subset \overline{V_0},
 \end{equation}
  for $\delta$ small.
Now, by \eqref{inclus12} and convexity of $V_0,$ when we take convex hull of $\Sigma,$ the part $B_{\delta}(0)\cap \overline{V_0}$ is not changed.
Therefore,
we have that
\beq\label{localapp}
	\tilde V_0\cap B_\delta(0)=\Sigma \cap B_\delta(0)=V_0\cap B_\delta(0)
\eeq
when $\delta>0$ is small.

Approximating $\rho_0$ by smooth convex functions $\rho_k,$ we can approximate $B_{r_0}\cap U_0$ in Hausdorff distance by a sequence of convex set $U_k:=\{x:\ x_n> \rho_k(x_1)\} \cap B_{r_0}$,  which is smooth near $0$, such that for each $k$, 
$$\partial U_k\cap B_{r_0}=\{ x \,:\, x_n=\rho_k(x_1)\}$$ 
for a convex, smooth function $\rho_k$ satisfying $\rho_k(0)=0$, $\rho_k\geq0$, 
and $\rho_k'(t)<0$ when $t<0$; 
and such that $\rho_k\to\rho_0$ locally uniformly as $k\to\infty$. 
Now, let $v_k$ be the convex function solving 
$$
(Dv_k)_{\#}(c_k\chi_{_{\Sigma}}+\frac{c_k}{k}\chi_{_{\tilde V_0\backslash\Sigma}})=\chi_{_{U_k}} ,  $$ 
where the constant 
$$c_k= \frac{|U_k|}{|\Sigma|+\frac{1}{k}|\tilde V_0\backslash\Sigma|}  
                  \rightarrow c_0\ \ \text{as}\  k\rightarrow \infty .$$
   By the definition of $U_k$ and the fact that the convex fucntions $\rho_k\to\rho_0$ locally uniformly as $k\to\infty,$
   we can deduce that $|U_k|$ converges to $|B_{r_0}\cap U_0|.$

Then by \eqref{localapp} and subtracting a constant if necessary,  
we have that $v_k\rightarrow v_0$ uniformly in $B_{r_1}(0)\cap \overline{V_0}$ as $k\rightarrow\infty$, 
for some $r_1<r_0$ independent of $k$.

We also extend $v_k$ to $\mathbb{R}^n$ as follows
$$v_k(x):=\sup\{L(x) \,:\, L\ \text{is affine}, L\leq v_k\ \text{in}\ \tilde V_0, \ \text{and} \ DL\in U_k\}$$
for any $x\in \mathbb{R}^n.$
By subtracting a constant, we may assume $v_k(0)=0.$ Since $$\|Dv_k\|_{L^\infty(\mathbb{R}^n)}\leq \text{diam} (U_k) \leq r_0,$$
up to a subsequence, we may assume $v_k$ converges to a convex function $\tilde{v}_0$ locally uniformly. Now, by weak convergence of Monge-Amp\`ere measure we have
$\det D^2\tilde{v}_0=c_0\chi_{\Sigma}$ in $\mathbb{R}^n.$ 
 Moreover, $D\tilde{v}_0$ is the optimal map from $\Sigma$ to 
$B_{r_0}(0)\cap U_0.$ By uniqueness of optimal maps we have that $\tilde{v}_0=v_0$ in $V_0\cap B_\delta(0).$
 Since $v_0$
is differentiable at $0$ (follows from Lemma \ref{u0v0}), we have that $\partial v_0(B_{\delta}(0))\subset B_{r_0}(0)\cap \overline{U_0},$ provided $\delta$ is small enough. This implies that $v_0=\tilde{v}_0$ in $B_{\delta}(0).$ 
Since $v_k\rightarrow v_0$ uniformly in $B_{\delta}(0)$ and
 $v_0$ is differentiable at points in $B_{\delta}(0)\cap \overline{V_0}$ (follows from Lemma \ref{u0v0}), 
 by the argument in Remark \ref{U01}, we have that $Dv_k$ converges to $Dv_0$ uniformly in $B_{r_1}(0)\cap \overline{V_0}$ by choosing $r_1=\frac{\delta}{2}.$

Since $\partial U_k, \partial\tilde V_0$ are also smooth near $0,$ 
by the localised $C^{2,\alpha}$ estimate in \cite[Theorem 1.1]{CLW1}, 
$v_k$ is smooth in $B_{r_2}\cap \overline{V_0}$, for some $r_2>0 $ independent of $k$. Here $r_2<r_1$ is chosen small such that $Dv_k(B_{r_2}\cap \overline{V_0})\subset B_{\frac{r_0}{2}}(0)\cap \overline{U_k}.$ Since $Dv_k$ converges to $Dv_0$ uniformly in $B_{r_1}(0)\cap \overline{V_0},$ $v_0\in C^1(B_{r_1}(0)\cap \overline{V_0})$ and $Dv_0(0)=0,$ we can choose such $r_2$ uniformly for  all $k.$
Note that the statement of \cite[Theorem 1.1]{CLW1} is a global one, but the proof is actually a local one. Indeed, for any
$y\in B_{r_2}\cap \partial V_0,$ by the above discussion we have that $Dv_k(y)\in  B_{\frac{r_0}{2}}(0)\cap  \partial U_k.$
Since both   $B_{r_2}\cap \partial V_0$ and $B_{\frac{r_0}{2}}(0)\cap  \partial U_k$ are smooth, and densities are positive constants in $B_{r_2}\cap \overline{V_0}$ and $B_{\frac{r_0}{2}}(0)\cap \overline{U_k},$ by \cite[Lemma 3.1]{CLW1} we have the tangential $C^{1,1-\epsilon}$ estimate of $u_k$ holds at $y,$  then, by \cite[Section 5]{CLW1} we have that the obliqueness holds at points $y$ and $Dv_k(y).$ Finally by \cite[proof of Theorem 1.1, Section 6]{CLW1}, we have that $v_k$ is $C^{2,\alpha}$ smooth at $y.$
Therefore we obtain a smooth approximation sequence of $v_0$. Note that we only need to use the smoothness of $v_k$ in $B_{r_2}\cap \overline{V_0}$ for taking the second order derivative, but we do not need to use the bound of $C^2$ norm for $v_k.$

\noindent{\it Step 2}.\ 
Let $w(y):=\partial_nv_0(y)+v_0(y)-\frac{n}{2}y_n\partial_nv_0(y)$, and define 
$$\underline{w}(t) = \inf\{w(y_1, y_2,\cdots, y_{n-1}, t)  \,:\,  y_1 > \rho_0^*(t)\},\quad 0<t<1.$$ 
Replacing $v_0$ by $v_k$, we can also define $w_k$ and $\underline{w_k}$ in the same way. 
Note that  for a point $y=(\rho_0^*(y_n), y_2,\cdots, y_n)\in \partial V_0\cap B_{\delta}(0)$ with $y_n> 0,$ 
we have that $x=Dv_k(y)\in \partial U_k.$ Similar to the reason for \eqref{bdrypt}, we also have that $x_1<0.$
By the definition of $U_k,$ we have that $x_n=\rho_k(x_1),$ hence,
$$\partial_nv_k(\rho_0^*(y_n), y_2,\cdots, y_n)=\rho_k\left(\partial_1v_k(\rho_0^*(y_n), y_2,\cdots, y_n)\right).$$
Then similar to the computation in Lemma \ref{min1}, we can show that $\partial_{n1}v_k(y)< 0.$ 
Now, analogously to Lemmas \ref{min1} and \ref{concave}, one can verify that 
$\underline{w_k}(t)$ is a concave function in $(0, \delta_0)$ for some positive constant $\delta_0$ independent of $k$. 
Hence by passing to the limit,   $\underline{w}(t)$ is also concave in $(0, \delta_0)$.

Denote $\hat{U}_0=Dv_0(B_1(0)\cap V_0).$ 
By the strict convexity of $v_0$ in $\overline{V_0},$ we have $B_{r_1}(0)\cap U_0\subset \hat{U}_0$ for some small $r_1>0$.  
Hence $\hat{U}_0$ is locally convex near 0.
Let 
$$\tilde{u}_0(x):=\sup\{L(x): L\ \text{is affine}, \  L\leq u_0\ \text{in}\ \hat{U}_0, \ \text{and}\ DL\in B_1(0)\cap V_0\},
\ \ \ x\in \mathbb{R}^n,$$ 
where $u_0$ is the Legendre transform of $v_0$ as in \eqref{v0dual}. 
Then $\tilde u_0$ satisfies 
 \begin{equation}\label{uconvex}
\det\, D^2\tilde{u}_0=\frac{1}{c_0}\chi_{\hat{U}_0}\ \ \ \text{in}\ \R^n.
 \end{equation}
Since $\tilde{u}_0$ is strictly convex in $\overline{\hat U_0}$, and $B_{r_1}(0)\cap U_0\subset \hat{U}_0$,
we have $S^c_h[\tilde{u}_0]\cap U_0=S^c_h[\tilde{u}_0]\cap \hat{U}_0$ 
for $h$  small. 

Since $U_0$ is flat in $e_2,\cdots, e_{n-1}$ directions near $0$, 
the right hand side of \eqref{uconvex} is independent of $x_2,\cdots, x_{n-1}$ near $0$. 
By Pogorelov's interior second derivative estimate (see \cite[Corollary 1.1]{C96}),
$\tilde{u}_0$ is $C^{1,1}$ smooth in the $e_i$-direction near $0$, for $i=2,\cdots, n-1$.
Namely, $u_0(te_i)=\tilde{u}_0(te_i)\leq C_1t^2$ near $t=0$.  
Hence, for $i=2,\cdots, n-1$ and $y \in V_0$ close to $0$, 
  \begin{align*}
  v_0(y)&=u_0^*(y)\\
  &= \sup_{x\in \overline{U_0}}\left\{x\cdot y-u_0(x)\right\}\\
  &\geq \sup_{t\in(-1,1)}\left\{te_i\cdot y-C_1t^2\right\}\\
  &\geq  C_2y_i^2
  \end{align*}
  for a constant $C_2>0$.   Hence 
 \begin{equation}\label{contain11}
  S_h[v_0]\subset \left\{y\in \mathbb{R}^n \,:\, |y_i|\leq Ch^{\frac{1}{2}}, \ i=2,\cdots, n-1\right\}
  \end{equation}
  for some constant $C$ independent of $h.$

\noindent{\it Step 3}.\ 
We introduce the points $p, \xi, q \in \partial S_h[v_0]$ such that 
 \begin{align*}
 p_n &= \sup\{y_n \,:\, y\in S_h[v_0]\}, \\
 \xi_n &= \inf\{y_n \,:\, y\in S_h[v_0]\}, \\
 q_1 &= \sup\{y_1 \,:\, y\in S_h[v_0]\}.
 \end{align*}
 Similarly to the proof of \eqref{balan1} (see also \cite[Corollary 5.1]{CLW1}), we have $p_n\geq C|\xi_n|.$
By \eqref{contain11}, 
% and \eqref{equi0
$S_h[v_0]$ is contained in a cuboid, that is
\begin{equation}\label{bbox}
	 S_h[v_0] \subset [0, q_1]\times[-Ch^{\frac12}, Ch^{\frac12}]^{n-2}\times[-Cp_n, Cp_n]. 
\end{equation}
Since $Dv_0(V_0)\subset \{x_n\geq 0\}$,  the function $v_0$ is monotone increasing in the $e_n$-direction, 
which implies  $q\in \partial V_0$.    Hence, from \eqref{eqlem},
$$q_1=\rho_0^*(q_n)\leq Cq_n^2 \leq Cp_n^2.$$
%where $\tilde q=(q_2,\cdots,q_{n-1})$.
%{\Small\color{blue} (In Proposition \ref{oblique1}, 
%%$\tilde x=(x_2, \cdots, x_{n-1})$. The notations $x', \tilde x$ etc are inconsistent in the paper.
%We should fix the notations by using $x', \tilde x$ or $\hat x$ throughout the paper, 
%and make clear at the beginning of \S2)}{\color{red} we unified the notations at the beginning of \S4, as there is no confusion in \S2 and \S3.}
From \eqref{bbox} and the volume estimate \eqref{nm1}, we have
$$h^{\frac{n}{2}}\approx  |S_h[v_0]|\leq Ch^{\frac{1}{2}(n-2)}p_nq_1 \leq Ch^{\frac{1}{2}(n-2)}p_n^3, $$   
which implies $p_n\geq Ch^{1/3}$. 
It then follows, analogously to \eqref{gg1}, 
$$v_0(p)=h \leq C p_n^3. $$
By following the proof of Lemma \ref{decay001}, we can further deduce the decay estimate 
\begin{equation}\label{lei1}
0\leq \underline{w}(t)\leq Ct^2.
\end{equation}

\noindent{\it Step 4}.\ 
In the above we have shown that $\underline w$ is concave and satisfies the estimate \eqref{lei1}.
We can now derive a contradiction as in dimension two, by showing that $\underline w$ is positive when $t>0$. 
On the one hand, by \eqref{lei1} and the concavity of $\underline{w}(t)$, we have $\underline{w}(t)\le 0$ $\forall\, t\in (0,\delta_0). $
On the other hand, for a fixed $0<t_0<\delta_0$ small, by the strict convexity of $v_0$, we have 
$$ \underline w(y_1, y_2,\cdots, y_{n-1},  t_0)=(1-\frac{n}{2}t_0)\partial_nv_0+v_0\geq \epsilon_0,   $$ 
where the constant $\epsilon_0>0$ is independent of $y_1,\cdots, y_{n-1}$.
Therefore, $\underline{w}(t_0)\geq \epsilon_0>0$,  which is a contradiction.
\end{proof}

\vskip10pt  

\section*{Acknowledgements}

The authors wish to thank the anonymous referee for his/her careful reading of the manuscript and valuable comments.

\vskip10pt

\bibliographystyle{amsplain}

\end{document}